\newtheorem*{defin*}{Definition}
\newtheorem{theo}{Theorem}[section]
\newtheorem{prop}{Proposition}[section]
\newtheorem{lem}{Lemma}[section]
\newtheorem{cor}{Corollary}[section]
\newtheorem{defin}{Definition}[section]
\newtheorem{rem}{Remark}[section]
\newtheorem{assum}{Assumption} [section]
\def\Re{\operatorname{{Re}}}
\def\Im{\operatorname{{Im}}}
\def\ld{{L^2(\Om)}}
\def\ga{{\gamma}}
\def\Ga{{\Gamma}}
\def\Om{{\Omega}}
\def\al{{\alpha}}
\def\la{{\lambda}}
\def\Re{\operatorname{{Re}}}
\def\Im{\operatorname{{Im}}}
\newcommand{\ds}{\displaystyle}
\def\be{\begin{equation}}
\def\ee{\end{equation}}
\def\bq{\begin{eqnarray}}
\def\eq{\end{eqnarray}}
\def\bqs{\begin{eqnarray*}}
	\def\eqs{\end{eqnarray*}}
\def\wa{\begin{aligned}}
	\def\ali{\end{aligned}}
\def\ben{\begin{enumerate}}
	\def\enu{\end{enumerate}}
\def\bmu{\begin{multicols}}
	\def\emu{\end{multicols}}
\def\dis{\displaystyle}
	\author[Corr\^ea]{Wellington Jos\'e Corr\^ea}
	\address{Academic Department of Mathematics, Federal Technological University of  Paraná, Campuses Campo Mour\~{a}o,  87301-899, Campo Mour\~{a}o, PR, Brazil
	}
	\email{wcorrea@utfpr.edu.br}
	\author[\''{O}zsar\i]{T\"{u}rker \"{O}zsar\i\textsuperscript{†}}
	\address{Department of Mathematics,
		Izmir Institute of Technology,
		Izmir, Turkey}
	\email{turkerozsari@iyte.edu.tr}
\thanks{\textsuperscript{†}Türker Özsarı's research was supported by TÜBİTAK 3501 Career Grant \#115F055}
 \title[Ginzburg-Landau equations with dynamic boundary conditions]{Complex Ginzburg-Landau equations with dynamic boundary conditions}
\begin{document}
 	\maketitle
 	\begin{abstract}
 		The initial-dynamic boundary value problem (idbvp) for the complex Ginzburg-Landau equation (CGLE) on bounded domains of $\mathbb{R}^N$ is studied by converting the given mathematical model into a Wentzell initial-boundary value problem (ibvp). First, the corresponding linear homogeneous idbvp is considered.  Secondly, the forced linear idbvp with both interior and boundary forcings is studied. Then, the nonlinear idbvp with Lipschitz nonlinearity in the interior and monotone nonlinearity on the boundary is analyzed.  The local well-posedness of the idbvp for the CGLE with power type nonlinearities is obtained via a contraction mapping argument.  Global well-posedness for strong solutions is shown.  Global existence and uniqueness of weak solutions are proven.  Smoothing effect of the corresponding evolution operator is proved. This helps to get better well-posedness results than the known results on idbvp for nonlinear Schrödinger equations (NLS).  An interesting result of this paper is proving that solutions of NLS subject to dynamic boundary conditions can be obtained as inviscid limits of the solutions of the CGLE subject to same type of boundary conditions. Finally, long time behaviour of solutions is characterized and exponential decay rates are obtained at the energy level by using control theoretic tools.
 	\end{abstract}
\tableofcontents
 	\section{Introduction}
This article is devoted to the analysis of the initial-dynamic boundary value problem (idbvp) for the complex Ginzburg-Landau equation (CGLE):
\begin{equation}\label{GL_equation}
 	\begin{cases}
 	u_t-(\lambda+i\alpha)\triangle u+f(u)=0 &\text { in } \Omega\times \mathbb{R_+},\\
 	\displaystyle\frac{\partial u}{\partial \nu} = -g(u_t) &\text { on } \Gamma_1\times \mathbb{R_+},\\
 	u=0  &\text { on } \Gamma_0\times \mathbb{R_+},\\
 	u(0)=u_0  &\text { in } \Omega.
 	\end{cases}
\end{equation}

In \eqref{GL_equation}, $\Omega\subset\mathbb{R}^N$ is a bounded regular domain with boundary $\Gamma$, which is the union of $\Gamma_0$ and $\Gamma_1$, two nonempty, non-intersecting, connected $(n-1)$-dimensional manifolds. $u=u(x,t)$ is a complex valued function that denotes the complex oscillation amplitude.  $f(u)$ will be defined either as the usual power type nonlinearity $f(u)=(\kappa+i\beta)|u|^{p-1}u-\gamma u$ (Sections 6-10) or as an appropriate Lipschitz function (Section 5). Here,  $\beta\in\mathbb{R}$ and $\alpha\in\mathbb{R}$ are the (nonlinear) frequency and (linear) dispersion parameters, respectively. Without loss of generality, $\alpha$ can be taken as positive. Therefore we will also assume $\alpha>0$ throughout the text. $\beta$ can have both signs except when we discuss global solutions with power type nonlinearities, where it will be assumed to be positive. $\displaystyle\frac{\partial u}{\partial \nu}$ denotes the unit outward normal derivative.  $p\ge 2$ is the source power index. Other parameters satisfy $\lambda,\kappa>0,\gamma\in\mathbb{R}$.   $g:\mathbb{C}\rightarrow\mathbb{C}$ is a complex valued function which is either taken as identity (Sections 2-4,6-10) or as a monotone function (Section 5) satisfying suitable growth conditions to be specified later in Assumption \ref{Assumption_g}.

CGLE is a fundamental model in mathematical physics to describe near-critical instability waves, such as a reaction diffusion system near a Hopf-bifurcation.  Concrete applications of this equation include nonlinear waves, second-order phase transitions, superconductivity, superfluidity, Bose-Einstein condensation, and liquid crystals. See \cite{IL2002} and the references therein for an overview of several phenomena described by the CGLE.

CGLE simultaneously generalizes the nonlinear heat and nonlinear Schrödinger equations (NLS), both of which can be obtained in the limit as the parameter pairs $(\alpha,\beta)$ and $(\lambda,\kappa)$ tend to zero, respectively.  Therefore, it is natural to expect that CGLE carries some of the characteristics of the nonlinear heat equation and NLS.  The latter two types of equations have been studied to some extent under dynamic boundary conditions.  However, there has been no such progress for the CGLE.  Most models assumed ideal set-ups neglecting possible linear and nonlinear interior-boundary interactions.  See for example \cite{Bechouche}, \cite{cazenave}, \cite{Clement}, \cite{doering}, \cite{Ginibre1}, \cite{Ginibre2}, \cite{Nader}, \cite{okazawa},  \cite{okazawa2002monotonicity}, \cite{Shimo}, \cite{shimotsuma}, and \cite{tang} for existence and non-existence results on the CGLE in the case of the whole space or domains with homogeneous or periodic boundary conditions.    There are only a few results on the CGLE under nonhomogeneous boundary conditions (\cite{AamSmyKrs2005}, \cite{AamSmyKrs2007}, \cite{Gal_GL}-\cite{Gao_GL_1}, \cite{Rosier}).  The NLS subject to inhomogenenous or nonlinear boundary conditions, which can be considered a limiting case of the CGLE, took much more attention in recent years; see for example \cite{Audiard1}, \cite{Audiard2}, \cite{Batal}, \cite{erdogan}, \cite{fokas}, \cite{holmer}, \cite{elena}, \cite{Kalantarov}, \cite{Lasiecka1}, \cite{Lasiecka2}, \cite{Ozsari}-\cite{Ozsar2015}, and \cite{Strauss}.

Recently,  \cite{Dynamic_BC}  studied the defocusing cubic Schrödinger equation with dynamic boundary conditions on a bounded domain $ \Omega\subset\mathbb{R}^N$ with smooth boundary for $N=2,3$. The model considered in \cite{Dynamic_BC} was the special case of the problem \eqref{GL_equation} where $ \alpha=\beta=1$ and $ \lambda=\kappa=\gamma=0. $ In this work, the authors obtained the local well-posedness of strong ($H^2$) solutions for $N=2,3$ and global well-posedness of strong solutions for $N=2.$ In addition,  the existence (without uniqueness) of  weak ($H^1$) solutions was obtained for $N=2,3$. Moreover, it was proven that the energy of the weak solutions satisfies a uniform decay rate estimate under appropriate monotonicity  conditions imposed on  the nonlinear term appearing in  the dynamic boundary conditions.

The key idea in \cite{Dynamic_BC} is replacing the given dynamic boundary condition with an equivalent boundary condition, which is obtained by replacing $u_t$ on the boundary with the Laplacian and other terms coming from the main equation.  This enables one to obtain the generation of a semigroup in an appropriate topology.  The idea of using a boundary condition which involves the trace of the Laplacian comes from Venttsel's work \cite{Wentzell}. In his paper, Venttsel was interested in finding the most general boundary condition which restricts the closure of a given elliptic operator to the infinitesimal generator of a semigroup of positive contraction operators on the Banach space of continuous functions over a regular compact region \cite{Wentzell}.  The result of this work was the discovery of the generalized Venttsel (more commonly "Wentzell") boundary condition
$\displaystyle a\Delta u +b\frac{\partial u}{\partial \nu} + c u = 0 \text{ on }\Gamma$,
 	which provided the desired property for $a>0, b,c \geq 0$.
 	
Physically, this boundary condition can be considered as a (damped) harmonic oscillator acting at each point on the boundary. In the case of the heat equation, this means that the boundary can act as a heat source or sink depending on the physical situation. These boundary conditions also arise naturally in the study of the wave equation. In particular, generalized Wentzell boundary conditions can be thought of as a closed subclass of acoustic boundary conditions. The well-posedness of Wentzell problems for the heat equation was proved on spaces of the form $X_p = L^p(\Omega)\cup L^p(\Gamma)$. See for example \cite{Goldstein}, where the Wentzell problem is treated as a coupled system of two PDEs: one on the interior and one on the boundary. Regarding Wentzell boundary conditions,  it is  also worth mentioning the papers \cite{Dynamic_BC}-\cite{Cavalcanti1}, and \cite{Goldstein}-\cite{Goldstein2} on heat and wave equations.

We are interested in studying three main problems for the CGLE model considered in \eqref{GL_equation}: (i) well-posedness (ii) inviscid limits (iii) long-time behaviour.  However, there are some challenges:
\begin{enumerate}[(i)]
    \item The method used on the heat equation \cite{Goldstein} fails when $\alpha\neq 0.$
	\item The linear version of \eqref{GL_equation} does not generate a semigroup at the $L^2-$level.  Since $ L^2(\Om) $ is not the natural underlying topology for problem \eqref{GL_equation} for semigroup generation, one cannot directly employ the monotone operator theory \cite{okazawa2002monotonicity} (a common tool to treat CGLE) since the operator $ B\,u=|u|^p\,u $ is $ m $-accretive only in $ L^2(\Om), $ but not for example in $ H^1(\Om). $
	\item There is no control of the $ L^2(\Omega)-$norm of the solution due to the presence of a non-standard boundary condition. This is a major drawback for treating the nonlinear terms via Gagliardo-Nirenberg type estimates.  Focusing problems become particularly difficult even under smallness assumptions on the power of the nonlinearities or initial datum.
\end{enumerate}

One can overcome some of the difficulties above by using the method presented in \cite{Dynamic_BC}. However, the additional terms in \eqref{GL_equation} compared to the NLS equation make the analysis more subtle.

There are several differences and improvements in our work compared to that of \cite{Dynamic_BC}, who studied cubic defocusing NLS subject to dynamic boundary conditions:
\begin{enumerate}[(i)]
    \item We prove that solutions of the CGLE possess better interior regularity than the solutions of the NLS.  This result verifies the natural smoothing effect of the semigroup generator (see for example Theorem \ref{theo T}, Corollary \ref{corollary_inho}, Lemma \ref{H1global}, and Theorem \ref{main_density_theo}).  This latter property is due to the parabolic component of the evolution operator, which is a missing ingredient in the case of the Schrödinger equation.
	\item Our proof of the local existence of strong solutions is slightly different because the solution operator constructed in \cite{Dynamic_BC} is not necessarily a contraction for our model on any closed ball of the solution space.  Our approach requires the construction of a special complete metric space (Lemma \ref{QTspace}) whose elements are compatible at time $t=0$ with the initial datum $u_0$. We show that the solution operator maps this complete metric space (actaully a suitably chosen closed ball in it) onto itself in a contractive manner.  Due to our construction of this complete metric space, the initial datum satisfies the necessary compatibility condition at each step of the contraction argument, which allows us to use the linear non-homogenous theory.
\item Controlling the $H^1(\Omega)$ norm of the solutions is trivial for the defocusing NLS.  In the case of the CGLE, the energy functional (see \eqref{EtDef} and \eqref{EtDef2}) involves other terms such as $\frac{1}{p+1}(\alpha\kappa+\beta\lambda)\|u\|_{L^{p+1}(\Gamma_1)}^{p+1}$, $\alpha\lambda\int_0^t\|\Delta u\|_{L^2(\Omega)}^2ds$, and $\int_0^t\|u(s)\|_{L^{2p}(\Omega)}^{2p}ds$.  In order to achieve a similar type of control of the energy for the solutions of CGLE, we make an assumption on the sign of the frequency parameter.
\item The smoothing effect is also utilized here in obtaining global well-posedness results for a wider range of parameters $N$ and $p$.  The fact that $\lambda>0$ in the CGLE helps us to obtain better control estimates compared to the case of the nonlinear Schrödinger equation.  \cite{Dynamic_BC} obtains global well-posedness of strong solutions only in dimension $N=2$ and $p=3.$  We are able to improve this result in the context of the CGLE and obtain global well-posedness of strong solutions for  $p\ge 2$ if $N=1$; $p\in [2,5]$ if $N=2$; and $\displaystyle p\in \left[2,\frac{11}{3}\right]$ if $N=3$.
\item We show that solutions of the idbvp for NLS can be obtained as inviscid limits of solutions of the idbvp for CGLE as the parameter pair $(\lambda,\kappa)\rightarrow 0$.  This gives one another approach to study NLS with dynamic boundary conditions.  Inviscid limits and convergence to the NLS for the CGLE at different topological levels have been previously studied in the case of the whole space, periodic or homogeneous boundary conditions (see e.g., \cite{Bechouche}, \cite{Ogawa}, and \cite{Wu}).  We verify this property at the $H^1$-level under dynamic boundary conditions.  Moreover, we give a more clear regularity in time for $u_t$ when $u\in L^{\infty}(0,T;V\equiv H_{\Gamma_0}^1(\Omega)$) is the weak solution of the idbvp for the NLS.  More precisely, we prove that $u_t$ indeed belongs to $L^{2}(0,T;V')$ as opposed to $L^\infty(0,T;V')$ (the common space for the homogeneous boundary value problems).
\end{enumerate}

The rest of the paper is organized as follows:

\begin{enumerate}[(i)]
  \item Section 2 is devoted to the description of notations, assumptions, and statements of the main results.
  \item In Sections 3 and 4, we discuss the well-posedness of the corresponding linear homogeneous and nonhomogeneous systems.
  \item In Section 5, we study the Lipschitz perturbations of the linear equation with monotone boundary conditions.
  \item In Section 6, we obtain the local well-posedness of strong solutions for \eqref{GL_equation}.
  \item In Section 7, we study the global strong solutions.
  \item In Section 8, we discuss the existence and uniqueness of weak solutions.
  \item In Section 9, we prove that solutions of the idbvp for the NLS can be obtained as inviscid limits of the solutions of the idbvp for the CGLE.
  \item Finally, in Section 10, we study the long time behavior of solutions and obtain exponential decay rates by using a special multiplier, which is now a classical tool in the control theory of PDEs.
\end{enumerate}

\section{Notation and Main Results}\label{Not_Main}
We consider the space $L^2(\Omega)$ of complex valued functions on $\Omega$ endowed
with the inner product
$$(y,z)_{L^2(\Omega)}=\int_{ \Omega}  y(x)\overline{z}(x)\,dx$$
and the induced norm
$$||y||_{L^2(\Omega)}^2 = (y,y)_{L^2(\Omega)}.$$

We also consider the Sobolev space $H^1(\Omega)$ endowed with the scalar product
$$(y,z)_{H^1(\Omega)}= (y,z)_{L^2(\Omega)}+ (\nabla y,\nabla z)_{L^2(\Omega)}. $$

We will observe later that the natural underlying space for problem \eqref{GL_equation} is
\begin{equation}
V=\left\{u\in\,H^1(\Omega);\, u=0 \text{ on } \Gamma_0\right\}\end{equation} instead of the common $L^2(\Omega)$.  $V'$ will denote the dual space of $V.$

Since $\Gamma_0\neq \emptyset$, due to Poincaré's inequality, we can consider the space $ V $ endowed with the norm induced by the scalar product
\begin{equation}
\label{normV}\left(y,z\right)_V=\left(\nabla\,y,\nabla\,z\right)_{L^2(\Om)},\,\forall\,y, z\in\,V.
\end{equation} The norm $\|y\|_{V}\equiv\|\nabla y\|_{L^2(\Omega)}$  is equivalent to the usual norm of $ H^1(\Omega)\,. $\\

In what follows, we define the operator $A$ as the sum of the Schrödinger and heat operators, that is,
\begin{equation}\label{operator_A}
A \equiv (\lambda+i\,\alpha)\Delta
\end{equation}
with the domain given by
\begin{equation}\label{D(A)} D(A) \equiv \left\{y\in V, \Delta y\in V, \displaystyle\frac{\partial y}{\partial \nu} = -(\lambda+i\,\alpha)\Delta	y\text{ on }\Gamma_1   \right\}.
\end{equation}
In the above definition, $\Delta|_{\Gamma_1}$ should be interpreted as the restriction of the Laplacian from the interior to the boundary, which is well-defined according to the Sobolev trace theory since $\Delta y\in V$ for $y\in D(A)$.\\

We first study the linear idbvp corresponding to \eqref{GL_equation} with $g\equiv id$: \begin{equation}\begin{cases} u_t = (\lambda+i\,\alpha)\,\Delta u & \text{ in }\Omega\times (0,\infty),\\
u = 0 & \text{ on }\Gamma_0\times (0,\infty),\\
\displaystyle\frac{\partial u}{\partial \nu} = -u_t & \text{ on }\Gamma_1\times (0,\infty),\\
u(0)=u_0 &\text{ in }\Omega. \end{cases}\label{LSDyn}\end{equation}

The  operator $ A $ given in \eqref{operator_A} recasts the idbvp (\ref{LSDyn}) as the following Wentzell initial-boundary value problem (ibvp):
\begin{equation}\label{LSVen}\begin{cases} u_t = (\lambda+i\,\alpha)\Delta u & \text{ in }\Omega\times (0,\infty),\\
u = 0 & \text{ on }\Gamma_0\times (0,\infty),\\
\displaystyle\frac{\partial u}{\partial \nu} = -(\lambda+i\,\alpha)\Delta u & \text{ on }\Gamma_1\times (0,\infty),\\
u(0)=u_0 &\text{ in }\Omega. \end{cases}\end{equation}

In abstract operator theoretic form, we can rewrite \eqref{LSVen} alternatively as $$\dot{u}=Au, u(0)=u_0.$$
Using the above reformulation of problem \eqref{LSDyn}, we are able to prove the following well-posedness result.
\begin{theo}[Linear Homogeneous Problem I]\label{Semigroup_theo}  The operator $(A,D(A))$ generates a strongly continuous semigroup of contractions on $V$. \end{theo}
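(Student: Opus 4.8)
The natural strategy is to invoke the Lumer--Phillips theorem: it suffices to show that $(A,D(A))$ is densely defined, dissipative (m-dissipative) on the Hilbert space $V$, i.e. that $\operatorname{Re}(Ay,y)_V\le 0$ for all $y\in D(A)$ and that $\operatorname{ran}(I-A)=V$ (equivalently $\operatorname{ran}(\mu I - A)=V$ for some $\mu>0$). Since the inner product on $V$ is $(y,z)_V=(\nabla y,\nabla z)_{L^2(\Omega)}$, the computation of $\operatorname{Re}(Ay,y)_V$ is where the Wentzell structure of $D(A)$ pays off. Write $Ay=(\lambda+i\alpha)\Delta y$ and compute $(Ay,y)_V=(\nabla(\lambda+i\alpha)\Delta y,\nabla y)_{L^2(\Omega)}$. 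Integrating by parts (Green's identity), the interior term becomes $-(\lambda+i\alpha)\|\Delta y\|_{L^2(\Omega)}^2$ plus a boundary term $\int_\Gamma (\lambda+i\alpha)\Delta y\,\overline{\partial_\nu y}\,d\Gamma$; on $\Gamma_0$ the trace of $y$ (hence $\nabla y$ tangentially) vanishes so only the $\Gamma_1$ portion survives, and there the domain condition $\partial_\nu y=-(\lambda+i\alpha)\Delta y$ converts the boundary integral into $-\int_{\Gamma_1}|(\lambda+i\alpha)\Delta y|^2\,d\Gamma=-(\lambda^2+\alpha^2)\|\Delta y\|_{L^2(\Gamma_1)}^2$. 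Taking real parts and using $\lambda>0$ gives $\operatorname{Re}(Ay,y)_V=-\lambda\|\Delta y\|_{L^2(\Omega)}^2-(\lambda^2+\alpha^2)\|\Delta y\|_{L^2(\Gamma_1)}^2\le 0$, which is dissipativity.

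Density of $D(A)$ in $V$ should follow from a standard argument: $C_c^\infty(\Omega)\subset D(A)$ (such functions vanish near $\Gamma$, so the boundary condition is trivially satisfied) and $C_c^\infty(\Omega)$ is dense in $H_0^1(\Omega)$, which is not all of $V$; so one needs slightly more, e.g. smooth functions on $\bar\Omega$ vanishing on $\Gamma_0$ together with the Wentzell condition on $\Gamma_1$, or one argues that $D(A)$ contains enough functions to be dense by solving an auxiliary elliptic problem. Concretely, given $\phi\in V$ one can approximate by solving a boundary value problem whose solution lies in $D(A)$ and is $V$-close to $\phi$; I would phrase this via the surjectivity result below, since once $I-A$ is surjective with bounded inverse, a Yosida-type approximation $y_n=(I-\tfrac1n A)^{-1}\phi\to\phi$ can be used, or simply note density is inherited once the range condition is established together with dissipativity (the closure of $D(A)$ being invariant).

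The main obstacle is the range condition: given $f\in V$, solve $(I-A)y=f$, i.e. $y-(\lambda+i\alpha)\Delta y=f$ in $\Omega$ with $y=0$ on $\Gamma_0$ and $\partial_\nu y=-(\lambda+i\alpha)\Delta y=y-f$ on $\Gamma_1$, so the effective boundary condition is $\partial_\nu y + y = f|_{\Gamma_1}$ on $\Gamma_1$ — a Robin-type condition, together with the Dirichlet condition on $\Gamma_0$. I would set this up variationally on $V$: seek $y\in V$ with
\begin{equation}
(y,v)_{L^2(\Omega)} + (\lambda+i\alpha)(\nabla y,\nabla v)_{L^2(\Omega)} + (\lambda+i\alpha)(y,v)_{L^2(\Gamma_1)} = (f,v)_{L^2(\Omega)} + (\lambda+i\alpha)(f,v)_{L^2(\Gamma_1)}
\end{equation}
for all $v\in V$. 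The sesquilinear form on the left is bounded on $V$ (using the trace theorem $H^1(\Omega)\hookrightarrow L^2(\Gamma_1)$) and coercive: testing with $v=y$ and taking real parts gives $\|y\|_{L^2(\Omega)}^2+\lambda\|\nabla y\|_{L^2(\Omega)}^2+\lambda\|y\|_{L^2(\Gamma_1)}^2$, which dominates $\lambda\|y\|_V^2$ since $\lambda>0$. By the Lax--Milgram theorem there is a unique such $y\in V$; elliptic regularity then shows $\Delta y\in L^2(\Omega)$, and a further bootstrap (or the variational identity restricted to test functions supported near $\Gamma_1$) shows $\Delta y\in V$ and that the Wentzell boundary condition holds in the appropriate trace sense, so $y\in D(A)$ and $(I-A)y=f$. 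This establishes m-dissipativity; Lumer--Phillips then yields that $A$ generates a strongly continuous contraction semigroup on $V$, completing the proof. The subtle points to watch are the precise sense in which $\Delta y\in V$ (needed for the trace $\Delta y|_{\Gamma_1}$ to be well-defined) and justifying that the weak Robin condition from Lax--Milgram upgrades to the pointwise Wentzell condition in $D(A)$; both are handled by the Sobolev trace theory already invoked in the definition of $D(A)$.
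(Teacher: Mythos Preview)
Your dissipativity argument coincides with the paper's (the boundary term $-(\lambda^2+\alpha^2)\|\Delta y\|_{L^2(\Gamma_1)}^2$ equals the paper's $-\|\partial_\nu y\|_{L^2(\Gamma_1)}^2$ via the Wentzell relation). The maximality arguments, however, are genuinely different. The paper introduces the auxiliary space $Z=\{z\in V:\Delta z\in L^2(\Omega),\ \partial_\nu z\in L^2(\Gamma_1)\}$ and works with the ``second-order'' sesquilinear form
\[
a(u,z)=(\lambda+i\alpha)(\Delta u,\Delta z)_{L^2(\Omega)}+\Big(\tfrac{\partial u}{\partial\nu},\tfrac{\partial z}{\partial\nu}\Big)_{L^2(\Gamma_1)}+\theta(u,z)_V,
\]
proving continuity and coercivity on $Z$ and invoking the Browder--Minty theorem. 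You instead observe that for $y\in D(A)$ the equation $(I-A)y=f$ forces the \emph{Robin} condition $\partial_\nu y+y=f|_{\Gamma_1}$, and pose a standard ``first-order'' variational problem directly on $V$, solvable by Lax--Milgram; from the resulting interior equation $(\lambda+i\alpha)\Delta y=y-f\in V$ you read off $\Delta y\in V$ at once (no separate bootstrap is needed) and recover the Wentzell condition by taking traces. Your route is more elementary and direct for the linear problem, while the paper's $Z$-formulation is built so as to generalize to the nonlinear monotone boundary feedback treated later in Section~5, where a reduction to a Robin problem is no longer available and Browder--Minty is genuinely required. One small correction: the reason the $\Gamma_0$ boundary integral vanishes in the dissipativity computation is that $\Delta y\in V$ forces $\Delta y|_{\Gamma_0}=0$, not merely that $y|_{\Gamma_0}=0$.
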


Going back to the idbvp \eqref{LSDyn}, one can restate the above theorem as follows.
\begin{theo}[Linear Homogeneous Problem II]\label{Linear}
	Let $u_0\in\,V$. Then there exists a unique solution $u\in\,C([0,\infty); V)$ to problem \eqref{LSDyn}.
\end{theo}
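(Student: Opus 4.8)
The plan is to read off Theorem~\ref{Linear} from Theorem~\ref{Semigroup_theo} together with the reformulation of \eqref{LSDyn} as the Wentzell problem \eqref{LSVen}, i.e. as the abstract Cauchy problem $\dot u=Au$, $u(0)=u_0$. Let $\{S(t)\}_{t\ge0}$ be the strongly continuous contraction semigroup on $V$ generated by $(A,D(A))$, which exists by Theorem~\ref{Semigroup_theo}, and for $u_0\in V$ set $u(t):=S(t)u_0$. By the standard theory of $C_0$-semigroups, $u\in C([0,\infty);V)$ with $\|u(t)\|_V\le\|u_0\|_V$, $u$ is the unique mild solution of $\dot u=Au$, $u(0)=u_0$, and, since the generator of a $C_0$-semigroup is densely defined, $D(A)$ is dense in $V$. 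Moreover, for $u_0\in D(A)$ the function $u$ is a classical solution: $u\in C^1([0,\infty);V)\cap C([0,\infty);D(A))$ and $\dot u(t)=Au(t)$ for every $t\ge0$.

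The next step is to verify that such a classical solution solves the idbvp \eqref{LSDyn}. Let $u_0\in D(A)$. For each $t$ we have $u(t)\in D(A)\subset V$, hence $u(t)=0$ on $\Gamma_0$; by \eqref{operator_A}--\eqref{D(A)}, the relation $\dot u(t)=Au(t)=(\lambda+i\alpha)\Delta u(t)$ is exactly the interior equation, $\Delta u(t)\in V$, and $\frac{\partial u(t)}{\partial\nu}=-(\lambda+i\alpha)\Delta u(t)$ on $\Gamma_1$. Since the trace operator $V\to H^{1/2}(\Gamma_1)$ is bounded linear and $t\mapsto u(t)$ is $C^1$ into $V$, the boundary trace $t\mapsto u(t)|_{\Gamma_1}$ is $C^1$ into $H^{1/2}(\Gamma_1)$ with derivative $u_t(t)|_{\Gamma_1}$; taking traces in the identity $u_t(t)=(\lambda+i\alpha)\Delta u(t)$, which holds in $V$, yields $u_t(t)|_{\Gamma_1}=(\lambda+i\alpha)\Delta u(t)|_{\Gamma_1}$, and substituting this into the Wentzell condition gives the dynamic boundary condition $\frac{\partial u}{\partial\nu}=-u_t$ on $\Gamma_1$. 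The initial condition holds because $S(0)$ is the identity. For general $u_0\in V$, choose $u_0^n\in D(A)$ with $u_0^n\to u_0$ in $V$; the classical solutions $u^n:=S(\cdot)u_0^n$ satisfy $\sup_{0\le t\le T}\|u^n(t)-u(t)\|_V\le\|u_0^n-u_0\|_V\to0$ for every $T>0$, so $u\in C([0,\infty);V)$ is the solution of \eqref{LSDyn} in the natural generalized sense, as a limit of classical solutions (equivalently, a solution of the associated weak/distributional formulation).

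For uniqueness, suppose $u,v\in C([0,\infty);V)$ both solve \eqref{LSDyn} with datum $u_0$; then $w:=u-v$ solves the same system with zero initial datum. Through the correspondence of \eqref{LSDyn} with the abstract problem, $w$ is a mild solution of $\dot w=Aw$, $w(0)=0$, which forces $w\equiv0$, and therefore $u\equiv v$. Alternatively, when $w$ is regular enough, pairing the equation with $w$ in the $V$ inner product, using Green's formula together with $w=0$ on $\Gamma_0$ and $\frac{\partial w}{\partial\nu}=-w_t$ on $\Gamma_1$, and invoking the equation on $\Gamma_1$, gives $\frac{d}{dt}\|w(t)\|_V^2\le0$, hence $\|w(t)\|_V\le\|w(0)\|_V=0$.

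The only genuine subtlety is the second step: it is precisely why $D(A)$ is defined with $\Delta y\in V$, so that the trace of $\Delta u$ entering the Wentzell condition is meaningful, and one must commute the trace with the time derivative in order to recover $u_t=(\lambda+i\alpha)\Delta u$ on $\Gamma_1$; for data merely in $V$ one can only assert a generalized solution, the density argument being what transfers the boundary relation to the limit.
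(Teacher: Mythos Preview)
Your proof is correct and follows the same approach as the paper: deduce Theorem~\ref{Linear} from Theorem~\ref{Semigroup_theo} by identifying the idbvp \eqref{LSDyn} with the abstract Cauchy problem $\dot u=Au$, $u(0)=u_0$, via the Wentzell reformulation \eqref{LSVen}. In fact the paper dispatches this step in a single sentence (``Translating the Wentzell ibvp \eqref{LSVen} back into the idbvp \eqref{LSDyn}, the proof of Theorem~\ref{Linear} is completed''), whereas you supply the details of that translation --- in particular, the commutation of the trace with the time derivative to recover $\partial u/\partial\nu=-u_t$ on $\Gamma_1$ from the Wentzell condition, and the density argument for $u_0\in V\setminus D(A)$.
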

In order to deal with the nonlinear problem \eqref{GL_equation}, we first study the following non-homogeneous model:
\begin{equation}\begin{cases} u_t- (\lambda+i\,\alpha)\,\Delta u=f & \text{ in }\Omega\times (0,\infty),\\
u = 0 & \text{ on }\Gamma_0\times (0,\infty),\\
\displaystyle\frac{\partial u}{\partial \nu} = -u_t & \text{ on }\Gamma_1\times (0,\infty),\\
u(0)=u_0 &\text{ in }\Omega. \end{cases}\label{LSDynf}\end{equation}
The above idbvp, as in the linear homogeneous case, can be treated as a Wentzell ibvp.  Indeed, it is considered as a special case of the more general Wentzell problem below:
\begin{equation}\label{LSf}\begin{cases} u_t - (\lambda+i\,\alpha)\Delta u = f & \text{ in }\Omega\times (0,\infty),\\
u = 0 & \text{ on }\Gamma_0\times (0,\infty),\\
\ds\frac{\partial u}{\partial \nu} = -(\lambda+i\,\alpha)\Delta u+g & \text{ on }\Gamma_1\times (0,\infty)\\
u(0)=u_0  &\text{ in }\Omega\end{cases}\end{equation} with a given internal forcing term $ f $ and boundary input $ g. $

We prove the following result for problem \eqref{LSf}.
\begin{theo}[Linear Nonhomogeneous Problem I]\label{theo T} Let $f\in L^1(0,\infty ; V)$ and $g\in L^2(0,\infty ; L^2(\Gamma_1))$. Then for each $u_0\in V$ there exists a unique solution $u\in C([0,\infty) ; V)$ to (\ref{LSf}).
	Moreover, $ \Delta\,u\in\,L^2(0,\infty; L^2(\Omega)) $ and   the following ``hidden'' trace regularity holds true:
	$$\frac{\partial u}{\partial \nu}  \in L^2(0, \infty, L^2(\Gamma_1 ) ). $$
	\end{theo}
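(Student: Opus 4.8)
The plan is to reduce the nonhomogeneous Wentzell problem \eqref{LSf} to the homogeneous case treated in Theorem \ref{Semigroup_theo} by a combination of Duhamel's formula and a lifting of the boundary input, and then to extract the extra parabolic regularity from an energy identity. First I would handle the boundary input $g$: since $g\in L^2(0,\infty;L^2(\Gamma_1))$, I would either absorb it into the Wentzell boundary condition by using the fact that the generator $A$ already encodes the relation $\partial u/\partial\nu=-(\lambda+i\alpha)\Delta u$ on $\Gamma_1$, or introduce an auxiliary elliptic lift $w(t)$ solving a stationary problem with Neumann-type data $g(t)$ on $\Gamma_1$ and homogeneous Dirichlet data on $\Gamma_0$, so that $v=u-w$ satisfies a problem of the form \eqref{LSDynf} with a modified interior forcing lying in $L^1(0,\infty;V)$. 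Combined with the semigroup $\{e^{tA}\}_{t\ge0}$ from Theorem \ref{Semigroup_theo}, Duhamel's principle then gives the mild solution
\[
u(t)=e^{tA}u_0+\int_0^t e^{(t-s)A}\big(f(s)+(\text{terms from }g)\big)\,ds,
\]
which belongs to $C([0,\infty);V)$ because $f\in L^1(0,\infty;V)$ and the semigroup is strongly continuous and contractive on $V$. Uniqueness follows from linearity together with the uniqueness part of Theorem \ref{Linear}, applied to the difference of two solutions.

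Next I would establish the additional regularity $\Delta u\in L^2(0,\infty;L^2(\Omega))$ and the hidden trace regularity $\partial u/\partial\nu\in L^2(0,\infty;L^2(\Gamma_1))$. The natural route is to work first with smooth data (e.g.\ $u_0\in D(A)$, $f$ smooth and compactly supported in time, $g$ smooth), test the equation $u_t-(\lambda+i\alpha)\Delta u=f$ against $-\overline{\Delta u}$, integrate over $\Omega$, and take real parts. The term $-\Re\int_\Omega u_t\overline{\Delta u}\,dx$ integrates by parts into $\frac12\frac{d}{dt}\|\nabla u\|_{L^2(\Omega)}^2$ plus a boundary term on $\Gamma_1$; crucially, on $\Gamma_1$ we have $u_t=-\partial u/\partial\nu$, so the boundary contribution reproduces $\frac12\frac{d}{dt}\|u\|_{L^2(\Gamma_1)}^2$ together with a sign-definite term. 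The term $-\Re\big[(\lambda+i\alpha)\int_\Omega|\Delta u|^2\,dx\big]=-\lambda\|\Delta u\|_{L^2(\Omega)}^2$ is the key coercive quantity, and the boundary piece coming from the same term produces exactly $\lambda\|\partial u/\partial\nu\|_{L^2(\Gamma_1)}^2$ after using the Wentzell relation $\partial u/\partial\nu=-(\lambda+i\alpha)\Delta u$ on $\Gamma_1$. Collecting these, one obtains a differential inequality of the form
\[
\frac{d}{dt}\Big(\|\nabla u(t)\|_{L^2(\Omega)}^2+\|u(t)\|_{L^2(\Gamma_1)}^2\Big)+2\lambda\|\Delta u(t)\|_{L^2(\Omega)}^2+c\,\Big\|\frac{\partial u}{\partial\nu}(t)\Big\|_{L^2(\Gamma_1)}^2\le C\|f(t)\|_V\,\|u(t)\|_V+(\text{controllable }g\text{ terms}),
\]
and integrating in time, using the already-known bound on $\|u\|_{C([0,\infty);V)}$, yields the claimed $L^2$-in-time estimates. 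Finally, a density argument extends the estimates from smooth data to general $u_0\in V$, $f\in L^1(0,\infty;V)$, $g\in L^2(0,\infty;L^2(\Gamma_1))$.

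I expect the main obstacle to be the rigorous justification of the integration-by-parts / multiplier computation: for general data the solution need not have $\Delta u\in L^2$ a priori, so the computation is only formal until one regularizes. The clean way around this is to prove the estimate first for $u_0\in D(A)$ with smooth forcings — where $u$ is a genuine strong solution and all the manipulations are legitimate — and then pass to the limit, which requires care because the Wentzell boundary condition couples the interior Laplacian to the boundary trace, so one must verify that the approximating solutions converge strongly enough (in $C([0,\infty);V)$, with uniform $L^2(0,\infty;H^2_{\mathrm{loc}})$-type bounds) to pass to the limit in the boundary term. A secondary technical point is the handling of the boundary input $g$ in the lifting step: one must check that the elliptic lift $w$ has enough regularity (so that $w_t\in L^1(0,\infty;V)$ and the trace $\partial w/\partial\nu$ is controlled), which follows from elliptic regularity on the smooth domain $\Omega$ together with $g\in L^2(0,\infty;L^2(\Gamma_1))$, but needs to be spelled out to close the argument.
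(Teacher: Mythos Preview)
Your overall strategy---Neumann lifting of $g$, Duhamel on $V$, an energy identity to extract $\Delta u\in L^2_tL^2_x$ and $\partial_\nu u\in L^2_tL^2_{\Gamma_1}$, followed by density---is exactly the paper's approach. However, two points in your execution are off. First, in the energy computation you invoke ``$u_t=-\partial u/\partial\nu$ on $\Gamma_1$'', but that is the boundary condition of the idbvp \eqref{LSDynf}, not of the Wentzell problem \eqref{LSf} that Theorem~\ref{theo T} concerns; in \eqref{LSf} the relation is $\partial_\nu u=-(\lambda+i\alpha)\Delta u+g$. Relatedly, the term $(\lambda+i\alpha)\|\Delta u\|_{L^2(\Omega)}^2$ has no boundary piece, so your account of where $\|\partial_\nu u\|_{L^2(\Gamma_1)}^2$ arises is incorrect. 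The paper's clean version is to take the $V$-inner product of the equation with $u$ (equivalently your multiplier after one integration by parts), which yields the boundary term $(\lambda+i\alpha)(\Delta u,\partial_\nu u)_{L^2(\Gamma_1)}$; substituting the correct Wentzell condition gives $\|\partial_\nu u\|_{L^2(\Gamma_1)}^2-(g,\partial_\nu u)_{L^2(\Gamma_1)}$, and the cross term is absorbed by Young's inequality. This produces precisely the estimate \eqref{4.17ineq0}.

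Second, your final remark that $w_t\in L^1(0,\infty;V)$ for the lift ``follows from elliptic regularity \ldots together with $g\in L^2(0,\infty;L^2(\Gamma_1))$'' is not right: with no control on $g_t$ you cannot bound $w_t$. This is exactly why the paper first establishes existence only for $g\in W^{1,1}(0,\infty;H^{-1/2}(\Gamma_1))$ (Lemma~\ref{tildecont}, where $\mathcal{N}g_t\in L^1(0,\infty;V)$ is available), obtains the a priori estimate \eqref{Gronwall2} for such $g$, and then passes to general $g\in L^2(0,\infty;L^2(\Gamma_1))$ by density. You identified the right obstacle but misdiagnosed its resolution; the density argument is essential, not a technicality on top of a direct lift.
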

%
%
	
	The Wentzell ibvp in \eqref{LSf} can be formally identified with the  non-homogeneous idbvp \eqref{LSDynf} with the special choice $g \equiv -f|_{\Gamma_1}$.

	\begin{cor}[Linear Nonhomogeneous Problem II]\label{corollary_inho} Let $f\in L^2(0,\infty ; V)$. Then for each $u_0\in V$ there exists a unique solution $u\in C([0,\infty) ; V)$ to (\ref{LSDynf}).  Moreover, $u_t|_{\Gamma_1}
		\in L^2(0, \infty ; L^2(\Gamma_1) ), u_t
		\in L^2(0, \infty; L^2(\Om))$ and $ u
		\in L^2(0, \infty; H^2(\Omega) ). $ \end{cor}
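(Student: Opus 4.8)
The plan is to deduce the Corollary from Theorem~\ref{theo T} by making rigorous the identification noted just before its statement: the Wentzell ibvp \eqref{LSf} with boundary input $g\equiv -f|_{\Gamma_1}$ is exactly the non-homogeneous idbvp \eqref{LSDynf}. The first task is to check that this choice of $g$ is admissible. Since $f\in L^2(0,\infty;V)$ and $V\subset H^1(\Omega)$, the Sobolev trace theorem gives $f(t)|_{\Gamma_1}\in H^{1/2}(\Gamma_1)\hookrightarrow L^2(\Gamma_1)$ for a.e.\ $t$, with $\|f(t)|_{\Gamma_1}\|_{L^2(\Gamma_1)}\lesssim\|f(t)\|_V$; hence $g:=-f|_{\Gamma_1}\in L^2(0,\infty;L^2(\Gamma_1))$, which is the hypothesis required by Theorem~\ref{theo T}. (On a finite interval $[0,T]$ one also has $f\in L^1(0,T;V)$ by Cauchy--Schwarz, which is what is needed to invoke Theorem~\ref{theo T} on $[0,T]$; patching over $T$ and using that the energy estimate underlying Theorem~\ref{theo T} is uniform in $T$ produces the global-in-time statements below.)

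I then apply Theorem~\ref{theo T} with this pair $(f,g)$: for $u_0\in V$ there is a unique $u\in C([0,\infty);V)$ solving \eqref{LSf}, with $\Delta u\in L^2(0,\infty;L^2(\Omega))$ and the hidden trace regularity $\partial u/\partial\nu\in L^2(0,\infty;L^2(\Gamma_1))$. Next I show this $u$ solves \eqref{LSDynf}. The interior equation, the Dirichlet condition on $\Gamma_0$, and the initial condition coincide in \eqref{LSf} and \eqref{LSDynf}, so only the boundary condition on $\Gamma_1$ needs attention. From the interior equation, $u_t(t)=(\lambda+i\alpha)\Delta u(t)+f(t)\in L^2(\Omega)$ for a.e.\ $t$; taking its trace on $\Gamma_1$ --- which is meaningful because the Wentzell formulation of \eqref{LSf} already presupposes that $\Delta u|_{\Gamma_1}$ is well defined, in fact $(\lambda+i\alpha)\Delta u|_{\Gamma_1}=g-\partial u/\partial\nu\in L^2(0,\infty;L^2(\Gamma_1))$ --- gives $u_t|_{\Gamma_1}=(\lambda+i\alpha)\Delta u|_{\Gamma_1}+f|_{\Gamma_1}$. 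Substituting into the Wentzell condition, $\partial u/\partial\nu=-(\lambda+i\alpha)\Delta u|_{\Gamma_1}+g=-(\lambda+i\alpha)\Delta u|_{\Gamma_1}-f|_{\Gamma_1}=-u_t|_{\Gamma_1}$, which is precisely the dynamic boundary condition. Hence $u$ solves \eqref{LSDynf}.

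Uniqueness: if $u_1,u_2$ both solve \eqref{LSDynf} with the same data, their difference $w$ solves the homogeneous dynamic problem \eqref{LSDyn} with $w(0)=0$; multiplying $w_t=(\lambda+i\alpha)\Delta w$ by $\overline{\Delta w}$, integrating over $\Omega$, and using Green's identity together with $w=0$ (hence $w_t=0$) on $\Gamma_0$ and $\partial w/\partial\nu=-w_t$ on $\Gamma_1$ yields $\tfrac12\tfrac{d}{dt}\|\nabla w\|_{L^2(\Omega)}^2+\lambda\|\Delta w\|_{L^2(\Omega)}^2+\|w_t\|_{L^2(\Gamma_1)}^2=0$, so $\|\nabla w(t)\|_{L^2(\Omega)}\equiv 0$ and $w\equiv 0$ (alternatively, reduce directly to Theorem~\ref{Linear}). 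Finally, the extra regularity: the identity $\partial u/\partial\nu=-u_t|_{\Gamma_1}$ just established, combined with the hidden trace regularity of Theorem~\ref{theo T}, gives $u_t|_{\Gamma_1}\in L^2(0,\infty;L^2(\Gamma_1))$; the identity $u_t=(\lambda+i\alpha)\Delta u+f$ with $\Delta u\in L^2(0,\infty;L^2(\Omega))$ and $f\in L^2(0,\infty;V)\hookrightarrow L^2(0,\infty;L^2(\Omega))$ gives $u_t\in L^2(0,\infty;L^2(\Omega))$; and for a.e.\ $t$ the function $u(t)$ solves the elliptic problem $-\Delta u(t)=-\tfrac{1}{\lambda+i\alpha}\big(u_t(t)-f(t)\big)\in L^2(\Omega)$, $u(t)=0$ on $\Gamma_0$, $\partial u(t)/\partial\nu=-u_t(t)|_{\Gamma_1}$ on $\Gamma_1$, so elliptic regularity for this mixed boundary value problem --- using that $\Gamma_0$ and $\Gamma_1$ are disjoint closed manifolds, so there is no interface to degrade regularity --- controls $\|u(t)\|_{H^2(\Omega)}$; squaring and integrating in $t$ gives $u\in L^2(0,\infty;H^2(\Omega))$.

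The main obstacle is this last step: obtaining the full $H^2(\Omega)$ regularity from control of $\partial u/\partial\nu$ only in $L^2(\Gamma_1)$ requires care about the trace space in which the Neumann datum lives. One may need to bootstrap the regularity of the boundary trace $u|_{\Gamma_1}$ first --- e.g.\ by observing that $u|_{\Gamma_1}$ satisfies a parabolic equation on the closed manifold $\Gamma_1$ governed by the Dirichlet--Neumann operator, which lifts $u|_{\Gamma_1}$ above $H^{1/2}(\Gamma_1)$ --- and only then feed it back into the elliptic estimate. Everything else is routine once the low regularity of the data ($u_0\in V$, $f\in L^2(0,\infty;V)$) is accommodated, which is already taken care of inside Theorem~\ref{theo T}.
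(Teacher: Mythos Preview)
Your approach is essentially the same as the paper's: identify \eqref{LSDynf} with the Wentzell problem \eqref{LSf} by taking $g=-f|_{\Gamma_1}$, observe via the trace theorem that $f\in L^2(0,\infty;V)$ forces $g\in L^2(0,\infty;L^2(\Gamma_1))$, apply Theorem~\ref{theo T}, and then read off $u_t\in L^2(0,\infty;L^2(\Omega))$ from the interior equation $u_t=(\lambda+i\alpha)\Delta u+f$ together with $\Delta u\in L^2(0,\infty;L^2(\Omega))$. The paper's own argument is exactly these two sentences and nothing more; you have in fact supplied more detail than the paper does (the explicit check that the Wentzell condition becomes the dynamic one, the uniqueness computation, and the elliptic-regularity justification for the $H^2$ claim). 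Your caution about the last step---that $\partial_\nu u$ is a priori only in $L^2(\Gamma_1)$ rather than $H^{1/2}(\Gamma_1)$---is well placed; the paper does not address this point at the level of Corollary~\ref{corollary_inho} and only recovers a clean $H^2$ estimate later (see \eqref{remH2space}) under the stronger hypotheses of Theorem~\ref{t:reg}.
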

\begin{rem}
 Due to the smoothing component of the operator $ A $ given in \eqref{operator_A}, the solution obtained here carries more regularity than the solution obtained in the case of the Schrödinger equation (compare Theorem \ref{theo T} and Corollary \ref{corollary_inho} with \cite[Theorem 1.4 and Corollary 1.5]{Dynamic_BC}).
\end{rem}

	We can  extend the linear theory to include Lipschitz perturbations (both on the interior and on the boundary) and nonlinear dynamic boundary feedback.
	\begin{equation}\label{NLSLip}\begin{cases} u_t = (\lambda+i\,\alpha)\Delta u + f(u) & \text{ in }\Omega\times (0,\infty), \\
	u = 0 & \text{ on }\Gamma_0\times (0,\infty), \\
	\ds\frac{\partial u}{\partial \nu} = -g(u_t) &\text{ on }\Gamma_0\times (0,\infty), \\
	u(0)=u_0 &\text{ in } \Omega.
	\end{cases} \end{equation}
	Here, $g(z)$ is assumed to satisfy the following conditions:
	\begin{assum}\label{Assumption_g}
		Assume that $g(z)$ is a continuous function on $\mathbb{C}$ such that both $g(z)$ and its inverse $g^{-1}(z)$ satisfy:
		\begin{enumerate}[(i)]
			\item $\text{Re} [(g(z)-g(v))(\bar{z}-\bar{v})] \geq m|z-v|^2$,
			\item $\text{Im} (g(z)\bar{z}) = 0$,
			\item $|g(z)| \leq M|z|$
		\end{enumerate}
for all $ v,z\in\,\mathbb{C} $ and 		for some constants $m,M\in\mathbb{R}_+$.
	\end{assum}

	Examples of functions satisfying Assumption \ref{Assumption_g} can be found in the literature for wave and  Schr\"odinger equations (see for example \cite{Lasiecka0}). In particular, assumptions (i) and (iii) form a complex analog to the assumption of monotonicity that appears in the study of wave equations.

When we consider the model in \eqref{NLSLip}, we will assume $f: V\to V$  to be Lipschitz continuous in the sense that there exists a constant $L$ such that for every pair $u,v\in V$,
	\begin{equation} \label{f}\|f(u)-f(v)\|_{V} \leq L\| u-v\|_{V}. \end{equation}

	Now, the well-posedness of \eqref{NLSLip} is achieved by
	considering a more general Wentzell ibvp. Namely, we replace $g(u_t)$ on the boundary with $g((\lambda+i\,\alpha)\Delta u + h(u)+\gamma\,u)$, where we assume $h: H^{1}(\Omega)\to L^2(\Gamma_1)$ is Lipschitz in the sense
	\begin{equation}\label{h} \|h(u)-h(v)\|_{H^{1/2}(\Gamma_1)} \leq K\| u-v\|_{V} \end{equation} for some $K>0.$
	
Since the trace operator $\gamma_0: H^1(\Omega)\to L^2(\Gamma)$ is continuous and linear, this formulation actually generalizes problem \eqref{NLSLip}, which is the special case where $h(u) = \gamma_0(f(y))$. In order to recast the problem in an abstract operator theoretic form, we define the operator $A_f$ given by
\begin{equation}\label{operator_Af} A_f u = (\lambda+i\,\alpha)\Delta u + f(u),\end{equation}
with the domain
\begin{equation}\label{D(A_f)} D(A_f) = \left\{y\in V, \Delta y\in V, \ds\frac{\partial y}{\partial \nu} = -g((\lambda+i\,\alpha)\Delta|_{\Gamma_1} y +h(y)) \text{ on }\Gamma_1   \right\}\end{equation}
where $g$ satisfies Assumption \ref{Assumption_g}, and $f$ and $h$ satisfy (\ref{f}) and (\ref{h}), respectively.
 One should notice that  if $f$ satisfies (\ref{f}), then $h \equiv \gamma_0 f $ satisfies (\ref{h}) by the Sobolev trace inequality.

 The following well-posedness fact holds true.
 \begin{theo}[Nonlinear Perturbations I]\label{D(Af)}  Taking into account Assumption \ref{Assumption_g} , (\ref{f}) and (\ref{h}),  the operator $(A_f,D(A_f))$ generates a strongly continuous semigroup on V. \end{theo}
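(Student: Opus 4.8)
The plan is to apply a nonlinear semigroup generation theorem (of Crandall–Liggett / Kōmura–Kato type, or a perturbation-of-$m$-dissipative-operators argument) to the operator $A_f$ on the Hilbert space $V$. The strategy is to split $A_f$ into its principal (linear, dissipative) part $A$ with the Wentzell-type domain and the Lipschitz perturbations coming from $f$ and from the boundary nonlinearity $g$. Since Theorem \ref{Semigroup_theo} already gives that $(A,D(A))$ generates a strongly continuous semigroup of contractions on $V$ — hence $A$ is $m$-dissipative (up to the usual identification) — the idea is to show that $A_f$ differs from such a generator by a globally Lipschitz perturbation, and then invoke the standard fact that a Lipschitz perturbation of the generator of a $C_0$-semigroup is again a generator of a $C_0$-semigroup. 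The subtlety is that the perturbation here is not just the additive term $f(u)$ in the interior: the boundary condition itself is nonlinear, so the domain $D(A_f)$ is \emph{not} the domain $D(A)$, and one cannot literally write $A_f = A + (\text{bounded Lipschitz})$ on a common domain.

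First I would set up the correct functional-analytic reformulation. Following the pattern used for the linear problems \eqref{LSDyn}–\eqref{LSVen}, I would introduce the (nonlinear) boundary map that replaces $g(u_t)$ on $\Gamma_1$ by $g((\lambda+i\alpha)\Delta u + h(u))$, using the main equation to trade $u_t|_{\Gamma_1}$ for the trace of the Laplacian plus lower-order terms; this is exactly the definition of $D(A_f)$ in \eqref{D(A_f)}. Next, I would verify that $A_f$ (with a suitable spectral shift $A_f - \omega I$ for $\omega$ large) is dissipative on $V$: the principal part contributes the dissipativity already established in Theorem \ref{Semigroup_theo} (which is where the monotonicity hypothesis (i) on $g$ and the sign $\lambda>0$ enter, controlling the boundary term $\operatorname{Re}\langle \partial u/\partial\nu, u\rangle_{\Gamma_1}$ via $\operatorname{Re}[(g(z)-g(v))(\bar z-\bar v)]\ge m|z-v|^2$), while the Lipschitz terms $f$ and $h$ are absorbed into $-\omega\|u\|_V^2$ using \eqref{f}, \eqref{h} and the trace inequality. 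The growth bound (iii), $|g(z)|\le M|z|$, and the monotonicity of $g^{-1}$ are what make the nonlinear boundary condition solvable and keep $g$ (hence the whole construction) non-degenerate.

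Then comes the range condition: I must show that $\operatorname{Range}(I - \mu A_f) = V$ for all small $\mu>0$ (equivalently for some $\mu>0$), i.e. that the stationary problem
\begin{equation*}
u - \mu\big[(\lambda+i\alpha)\Delta u + f(u)\big] = w \text{ in }\Omega,\quad u=0\text{ on }\Gamma_0,\quad \frac{\partial u}{\partial\nu} = -g\big((\lambda+i\alpha)\Delta|_{\Gamma_1}u + h(u)\big)\text{ on }\Gamma_1
\end{equation*}
has a solution $u\in D(A_f)$ for every $w\in V$. The approach is to first solve the corresponding linear resolvent problem for $A$ (which is surjective onto $V$ by Theorem \ref{Semigroup_theo}), obtaining a bounded linear solution operator, and then treat the nonlinear terms $f(u)$ and the nonlinear boundary datum via a fixed-point (contraction) argument on $V$: for $\mu$ small the Lipschitz constants $L$, $K$, $M$ combine to give a contraction, because the resolvent of $A$ is a bounded map $V\to D(A)\hookrightarrow V$ and the extra smoothing ($\Delta u\in L^2(\Omega)$, controlled trace) keeps all the compositions well-defined. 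One has to be slightly careful that the fixed point actually lands in $D(A_f)$ and satisfies the nonlinear boundary condition — this uses the invertibility of $g$ (Assumption \ref{Assumption_g} applied to $g^{-1}$) to rewrite the boundary condition in a form amenable to the linear theory, which is precisely why the hypothesis is imposed on both $g$ and $g^{-1}$.

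The main obstacle I anticipate is exactly this range/resolvent step in the presence of the \emph{nonlinear} Wentzell boundary condition: the nonlinearity sits on the boundary and is composed with the trace of $\Delta u$, so the fixed-point map mixes interior elliptic regularity with boundary traces, and one must carefully track that the ``hidden'' trace regularity $\partial u/\partial\nu\in L^2(\Gamma_1)$ (cf. Theorem \ref{theo T}) survives the nonlinear composition and that no loss of derivatives occurs. Once dissipativity (after shifting) and the range condition are both in hand, the Crandall–Liggett theorem (or the Lipschitz-perturbation version of the Lumer–Phillips theorem, since here the perturbation is globally Lipschitz) yields that $(A_f, D(A_f))$ generates a strongly continuous (nonlinear, but affine-plus-Lipschitz, hence quasi-contractive after rescaling the norm) semigroup on $V$, which is the assertion.
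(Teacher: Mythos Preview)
Your overall architecture --- show $\omega$-dissipativity of $A_f$ on $V$, establish the range condition, then invoke Lumer--Phillips/Crandall--Liggett --- matches the paper, and your dissipativity sketch is essentially the computation the paper carries out: take $(A_f u - A_f v,\,u-v)_V$, integrate by parts, use the monotonicity of $g^{-1}$ on the boundary term, and absorb the Lipschitz contributions of $f$ and $h$ into $\omega\|u-v\|_V^2$.

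The genuine divergence is in the maximality step. You propose to obtain surjectivity of $I-\mu A_f$ by starting from the linear resolvent of $A$ and running a contraction/fixed-point argument for the Lipschitz perturbations. The paper does \emph{not} do this. Instead it works directly on the auxiliary space
\[
Z=\bigl\{z\in V:\ \Delta z\in L^2(\Omega),\ \partial_\nu z\in L^2(\Gamma_1)\bigr\},
\]
defines a single nonlinear form
\[
a(u,v)=\theta(u,v)_V+(\lambda+i\alpha)(\Delta u,\Delta v)_{L^2(\Omega)}+\bigl(g^{-1}(\partial_\nu u),\partial_\nu v\bigr)_{L^2(\Gamma_1)}-(f(u),v)_V+\bigl(h(u),\partial_\nu v\bigr)_{L^2(\Gamma_1)},
\]
and verifies continuity and coercivity on $Z$ (the monotonicity of $g^{-1}$ contributes $+m\|\partial_\nu u\|_{L^2(\Gamma_1)}^2$ to coercivity, while the $f$- and $h$-terms are beaten by taking $\operatorname{Re}\theta$ large). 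Browder--Minty then gives a solution in $Z$, which is upgraded to $D(A_f)$ by testing against suitable $v$ to recover the nonlinear Wentzell boundary condition.

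The trade-off: your fixed-point route is conceptually natural but runs into exactly the obstacle you flag --- the nonlinearity $g$ is composed with $(\lambda+i\alpha)\Delta u|_{\Gamma_1}$, so freezing $u^\star$ and solving a linear problem does not immediately produce the correct boundary condition for $u$; one has to set up the iteration on $Z$ (tracking $\partial_\nu u$ as an independent unknown) and use both the Lipschitz bound and the invertibility of $g$ to close it. The paper's Browder--Minty argument sidesteps this entirely: by building $g^{-1}$ into the form $a(\cdot,\cdot)$, the nonlinear boundary condition is encoded variationally and no iteration is needed. Your approach can be made to work, but the paper's is shorter and avoids the bookkeeping you anticipated as the ``main obstacle.''
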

Going back to the idbvp \eqref{NLSLip}, one obtains:
\begin{cor}[Nonlinear Perturbations II]\label{C:2}
Under the same assumptions in Theorem \ref{D(Af)},  for any initial data $u_0 \in V $ there exists a unique solution $u \in C([0, \infty) , V)$ of the problem (\ref{NLSLip}).
	
\end{cor}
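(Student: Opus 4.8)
The plan is to obtain Corollary \ref{C:2} as a translation of the abstract generation result of Theorem \ref{D(Af)} into the language of the dynamic boundary value problem \eqref{NLSLip}. By Theorem \ref{D(Af)}, $(A_f,D(A_f))$ generates a strongly continuous semigroup $\{S(t)\}_{t\ge 0}$ on $V$, so in particular $\overline{D(A_f)}=V$ and, for each $u_0\in V$, the function $u(t):=S(t)u_0$ belongs to $C([0,\infty),V)$. It then remains to check that this $u$ solves \eqref{NLSLip} and that it is the only such solution.

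First I would treat the regular case $u_0\in D(A_f)$. Then $u(t)\in D(A_f)$ for all $t\ge 0$ and $u$ is a strong solution of $\dot u = A_f u$: since $V$ is a Hilbert space, $u$ is Lipschitz on compact time intervals and differentiable for a.e.\ $t$, with $u_t(t)=A_f u(t) = (\lambda+i\,\alpha)\Delta u(t)+f(u(t))$ in $V$, which is exactly the interior equation of \eqref{NLSLip}. From $u(t)\in V$ one reads off $u=0$ on $\Gamma_0$, and from $u(t)\in D(A_f)$ one gets the Wentzell condition $\frac{\partial u}{\partial \nu}=-g\big((\lambda+i\,\alpha)\Delta|_{\Gamma_1}u+h(u)\big)$ on $\Gamma_1$. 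The crucial point is to recognize the argument of $g$ as $u_t|_{\Gamma_1}$: because $\Delta u(t)\in V$ the trace $(\lambda+i\,\alpha)\Delta|_{\Gamma_1}u$ is well defined, and because $u_t=(\lambda+i\,\alpha)\Delta u+f(u)\in V$, taking traces on $\Gamma_1$ and using that the present setting corresponds to $h=\gamma_0 f$ gives
\[
(\lambda+i\,\alpha)\Delta|_{\Gamma_1}u+h(u)=\big((\lambda+i\,\alpha)\Delta u+f(u)\big)\big|_{\Gamma_1}=u_t\big|_{\Gamma_1},
\]
so that $\frac{\partial u}{\partial \nu}=-g(u_t)$ on $\Gamma_1$. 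Hence $u$ solves \eqref{NLSLip} whenever $u_0\in D(A_f)$.

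For an arbitrary $u_0\in V$, I would invoke density of $D(A_f)$ in $V$ together with the continuous dependence built into the generation theorem: choosing $u_0^n\in D(A_f)$ with $u_0^n\to u_0$ in $V$ and writing $u^n=S(\cdot)u_0^n$, the Lipschitz estimate $\|S(t)u_0^n-S(t)u_0^m\|_V\le e^{\omega t}\|u_0^n-u_0^m\|_V$ (for some $\omega\ge 0$) forces $u^n\to u=S(\cdot)u_0$ in $C([0,T],V)$ for every $T>0$. The limit $u$ is then the solution of \eqref{NLSLip}, understood as the generalized solution obtained as a limit of strong solutions, with the boundary relations passing to the limit in the appropriate weak sense. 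Uniqueness follows from the same estimate applied to two solutions sharing the datum $u_0$, giving $\|u(t)-v(t)\|_V\le e^{\omega t}\|u(0)-v(0)\|_V=0$.

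The step I expect to require the most care is the identification, for merely $V$-valued data, of the Wentzell formulation with the dynamic one: when $u_0\notin D(A_f)$ the quantities $\Delta u(t)$ and $u_t(t)$ need not be $V$-valued, so one must specify the (weak) sense in which $\frac{\partial u}{\partial \nu}=-g(u_t)$ holds along $\Gamma_1$ and verify that the trace identities above are stable under the limiting procedure. The remaining pieces are routine consequences of (nonlinear) $C_0$-semigroup theory and of the trace theorems already used to define $D(A_f)$.
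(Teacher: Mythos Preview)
Your proposal is correct and follows the same approach as the paper: derive the corollary directly from the semigroup generation result of Theorem \ref{D(Af)}, then translate the Wentzell boundary condition back into the dynamic one via the identification $h=\gamma_0 f$. The paper in fact treats Corollary \ref{C:2} as an immediate consequence, simply stating at the end of Section 5 that ``Theorem \ref{D(Af)} and Corollary \ref{C:2} are proved'' once the Lumer--Phillips theorem has been applied; your write-up spells out the details (the strong case $u_0\in D(A_f)$, the density argument, and the weak sense of the boundary condition for general $u_0\in V$) that the paper leaves implicit.
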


Finally, we study the problem \eqref{GL_equation} with $ g\equiv id, $ that is,
\begin{equation}\label{GL_equation_1}
\begin{cases}
u_t-(\lambda+i\alpha)\triangle u+(\kappa+i\beta)|u|^{p-1}u-\gamma u=0 &\text { in } \Omega\times \mathbb{R_+},\\
\displaystyle\frac{\partial u}{\partial \nu} = -u_t &\text { on } \Gamma_1\times \mathbb{R_+},\\
u=0  &\text { on } \Gamma_0\times \mathbb{R_+},\\
u(0)=u_0  &\text { in } \Omega.
\end{cases}
\end{equation}

 	   In order to achieve our goal, we resort to the inhomogeneous linear theory   with a forcing term given by  $F(u) = -(\kappa+i\,\beta)|u|^{p-1}\,u+\ga\,u$. Using a contraction mapping argument and some a priori estimates, we prove the following  local well-posedness result:
 	   \begin{theo}[Local Strong Solutions]\label{T1}Let $N\le 3$ and $\beta>0$.  Then,
 	   	for every bounded subset $B\subset X_0$, there exists $T>0$ such that for all $(u_0, w_0)\in B$, there exists a unique solution $u$ of (\ref{GL_equation_1}) with time derivative $u_t = w$ such that the pair $(u,w)\in X_T$.
 	    \end{theo}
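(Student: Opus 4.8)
\emph{Proof proposal.} The plan is a contraction mapping argument in which the linear nonhomogeneous theory (Corollary~\ref{corollary_inho} and Theorem~\ref{theo T}) serves as the propagator and the power nonlinearity is treated as a source term. Set $F(u):=-(\kappa+i\beta)|u|^{p-1}u+\gamma u$, so that \eqref{GL_equation_1} is exactly \eqref{LSDynf} with interior forcing $F(u)$ and boundary data $g\equiv -F(u)|_{\Gamma_1}$. Differentiating in time, $w:=u_t$ formally solves the same idbvp with forcing $\partial_t F(u)=\tfrac{p+1}{2}(-(\kappa+i\beta))|u|^{p-1}u_t+\tfrac{p-1}{2}(-(\kappa+i\beta))|u|^{p-3}u^2\bar u_t+\gamma u_t$, which is linear in $u_t$ with coefficients built from $u$. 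Since the pair $(u,w)$ determines $\Delta u$ algebraically through the interior equation, $(\lambda+i\alpha)\Delta u=u_t-F(u)$, and through the boundary relation $\partial u/\partial\nu=-w$ on $\Gamma_1$, control of $(u,w)$ in $X_T$ produces $u\in L^\infty(0,T;H^2(\Omega))$ by elliptic regularity; this is the strong-solution regularity and is why the contraction must be run on the pair $(u,w)$ rather than on $u$ alone.

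First I would fix a bounded $B\subset X_0$, put $M:=\sup_B\|(u_0,w_0)\|_{X_0}$, choose $R\gg M$, and for $T>0$ to be determined let $\mathcal{B}_{R,T}$ be the closed ball of radius $R$ in the complete metric space $\mathcal{Q}_T$ of Lemma~\ref{QTspace}; by that lemma $\mathcal{B}_{R,T}$ is itself a complete metric space and every $(v,z)\in\mathcal{B}_{R,T}$ satisfies $v(0)=u_0$, $z(0)=w_0$ together with the $t=0$ compatibility relation $\partial u_0/\partial\nu+w_0=0$ on $\Gamma_1$. Define $\Phi(v,z):=(u,w)$, where $u$ solves \eqref{LSDynf} with datum $u_0$ and forcing $F(v)$, and $w$ solves \eqref{LSDynf} with datum $w_0$ and forcing $\partial_t F$ evaluated along $(v,z)$, namely a fixed linear combination of $|v|^{p-1}z$, $|v|^{p-3}v^2\bar z$ and $\gamma z$. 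For $\Phi$ to be well defined one checks that $F(v)$ and this differentiated forcing lie in the classes required by the linear theory; this is where the restriction $N\le 3$ enters, through $H^2(\Omega)\hookrightarrow L^\infty(\Omega)$ and $H^1(\Omega)\hookrightarrow L^6(\Omega)$, which allow one to bound $\||v|^{p-1}\nabla v\|_{L^2(\Omega)}$ and the analogous terms by the $X_T$-norm of $(v,z)$. Corollary~\ref{corollary_inho} then places $u$ in the correct space, Theorem~\ref{theo T} (with the hidden trace bound) together with the algebraic recovery of $\Delta u$ places $w$ there, and the $t=0$ compatibility carried by $\mathcal{Q}_T$ guarantees that the nonhomogeneous linear theory is legitimately applicable at every iteration step; since $u(0)=u_0$ and $w(0)=w_0$, we get $\Phi(\mathcal{B}_{R,T})\subset\mathcal{Q}_T$.

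Next I would show $\Phi$ maps $\mathcal{B}_{R,T}$ into itself and is a strict contraction. Combining the linear estimates of Corollary~\ref{corollary_inho} and Theorem~\ref{theo T} with the nonlinear bounds above and Hölder in time, the $X_T$-norm of $\Phi(v,z)$ is dominated by $C\,\|(u_0,w_0)\|_{X_0}+C(R)\,T^{\sigma}$ for some $\sigma>0$; choosing $R\sim 2CM$ and then $T$ small, depending only on $R$, forces this below $R$, and a parallel estimate for $\Phi(v_1,z_1)-\Phi(v_2,z_2)$ — in which every nonlinear difference term again carries a positive power of $T$ — makes $\Phi$ a contraction in the metric of $\mathcal{Q}_T$ after shrinking $T$ once more. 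The energy identity for \eqref{GL_equation_1} (involving the boundary term $\tfrac{1}{p+1}(\alpha\kappa+\beta\lambda)\|u\|_{L^{p+1}(\Gamma_1)}^{p+1}$ and $\alpha\lambda\int_0^t\|\Delta u\|_{L^2(\Omega)}^2$) is used in these a priori bounds, which is where the hypothesis $\beta>0$ — hence $\alpha\kappa+\beta\lambda>0$ — is needed. Banach's fixed point theorem yields a unique $(u,w)\in\mathcal{B}_{R,T}$. Finally I would verify that the fixed point is a genuine solution: at the fixed point $u$ solves \eqref{LSDynf} with forcing $F(u)$ and $w$ solves it with forcing $\partial_t F(u)$; differentiating the first problem in time and invoking uniqueness in the linear theory gives $u_t=w$, so $u$ solves \eqref{GL_equation_1} with $(u,u_t)\in X_T$, and uniqueness in $X_T$ follows from the contraction estimate (or a direct Gr\"onwall argument on differences of two solutions).

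The main obstacle I anticipate is the nonlinear estimate itself: because the natural energy space is $V=H^1_{\Gamma_0}(\Omega)$ and not $L^2(\Omega)$, the source terms $F(v)$ and $\partial_t F(v,z)$ must be estimated in $V$, which forces control of $|v|^{p-1}\nabla v$ and $|v|^{p-1}\nabla z$ in $L^2(\Omega)$ and hence an $L^\infty$-in-space bound on $v$; this bound is available only through the $H^2$-regularity of strong solutions and the embedding $H^2(\Omega)\hookrightarrow L^\infty(\Omega)$, valid precisely for $N\le 3$, which explains the dimensional restriction. A second, more structural difficulty — already noted in the introduction — is that the naive solution map need not be contractive on a closed ball of $X_T$ and that the nonhomogeneous linear theory requires the compatibility $\partial u_0/\partial\nu=-w_0$ on $\Gamma_1$; both are circumvented by working not in $X_T$ but in the compatibility-adapted complete metric space $\mathcal{Q}_T$ of Lemma~\ref{QTspace} and running the contraction there.
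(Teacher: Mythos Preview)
Your proposal is correct and follows essentially the same route as the paper: differentiate in time, set up the map $(v,z)\mapsto(u,w)$ via the linear nonhomogeneous theory, restrict to the compatibility-adapted space $Q_T$ of Lemma~\ref{QTspace}, and run a contraction on a ball there using the nonlinear estimates of Lemma~\ref{l:est} (which are exactly the $H^2\hookrightarrow L^\infty$ bounds you describe). One small correction: the paper does \emph{not} use the energy identity or the sign condition $\beta>0$ in the invariance and contraction steps; those estimates come purely from Lemma~\ref{l:est} and Theorem~\ref{t:reg}, and the hypothesis $\beta>0$ enters only afterward, via \eqref{EE0}, to upgrade uniqueness from the ball $B_R(Q_T)$ to all of $Q_T$ (see the Remark closing Section~6).
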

 	   Spaces $X_0$ and $X_T$ are defined in Section 6.  Given the association $w = u_t$, we can rephrase  $(u,w)\in X_T$ as
 	   \begin{equation} u \in C\left([0,T]; H^2(\Omega)\cap V\right)\cap C^1([0,T]; V) .\end{equation}

Regarding the global well-posedness, we have the following result:
 	   \begin{theo}[Global Strong Solutions]\label{GlobalThe}
Let $(u,u_t)\in X_T$ be a local strong solution as in Theorem \ref{T1} and $\beta>0$.  Then,  this solution can be extended globally under the conditions: $p\ge 2$ if $N=1$; $p\in [2,5]$ if $N=2$; and $\displaystyle p\in \left[2,\frac{11}{3}\right]$ if $N=3$.
 	    \end{theo}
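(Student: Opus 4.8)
The plan is to combine the local well-posedness result (Theorem~\ref{T1}) with two layers of a priori estimates and a continuation argument. Since Theorem~\ref{T1} produces a solution on an interval whose length depends only on a bound for the datum in $X_0$, and since a strong solution automatically verifies, at every instant, the compatibility relations defining $X_0$, it suffices to show that for each finite $T>0$ there is a bound $\sup_{t\in[0,T]}\bigl(\|u(t)\|_{H^2(\Om)}+\|u_t(t)\|_V\bigr)\le C(T,u_0)$: reapplying Theorem~\ref{T1} from times approaching a putative maximal time would then force that time to be $+\infty$. Poincar\'e's inequality will be used freely, since $\Ga_0\neq\emptyset$.

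The first, energy-level, estimate I would obtain by testing \eqref{GL_equation_1} with $\overline{u_t}$ and with $\overline{|u|^{p-1}u}$, taking real and imaginary parts and combining them so that the dispersive cross-terms cancel — the mechanism already used for the linear homogeneous problem in Theorem~\ref{Semigroup_theo} — which leads to the energy identity recorded in \eqref{EtDef}--\eqref{EtDef2}. Here $\beta>0$ (together with $\al,\kappa,\la>0$) is precisely what makes the coefficient $\al\kappa+\beta\la$ of $\|u\|_{L^{p+1}(\Ga_1)}^{p+1}$ positive and the dissipative integrals $\al\la\int_0^t\|\Delta u\|_{L^2(\Om)}^2\,ds$ and $\kappa\int_0^t\|u\|_{L^{2p}(\Om)}^{2p}\,ds$ of favourable sign. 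Absorbing the sign-indefinite lower-order $\ga$-terms with Young's and Poincar\'e's inequalities and invoking Gronwall's lemma yields, on every finite $[0,T]$,
\[
\|u\|_{L^\infty(0,T;V)}+\|u_t\|_{L^2(0,T;L^2(\Om))}+\|u_t\|_{L^2(0,T;L^2(\Ga_1))}+\|\Delta u\|_{L^2(0,T;L^2(\Om))}+\|u\|_{L^{2p}(0,T;L^{2p}(\Om))}\le C(T,u_0),
\]
and hence also $u\in L^2(0,T;H^2(\Om))$ by elliptic regularity for the Wentzell problem. Interpolating this with $u\in L^\infty(0,T;H^1(\Om))\cap L^{2p}(0,T;L^{2p}(\Om))$ places $u$ in a useful scale of mixed-norm spaces $L^r(0,T;L^q(\Om))$.

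For the second, $H^2\times V$-level, estimate I would differentiate \eqref{GL_equation_1} in time (rigorously, on time difference quotients or a Galerkin scheme, then pass to the limit): with $w:=u_t$,
\[
w_t-(\la+i\al)\Delta w+(\kappa+i\beta)\,\partial_t\!\bigl(|u|^{p-1}u\bigr)-\ga w=0 \ \text{in}\ \Om\times(0,\infty),\qquad w=0\ \text{on}\ \Ga_0,\quad \frac{\partial w}{\partial\nu}=-w_t\ \text{on}\ \Ga_1,
\]
with $\bigl|\partial_t(|u|^{p-1}u)\bigr|\le p\,|u|^{p-1}|w|$. Testing with $\overline{w_t}$, the boundary term $\int_{\Ga_1}\frac{\partial w}{\partial\nu}\overline{w_t}=-\int_{\Ga_1}|w_t|^2$ is favourable, and repeating the real/imaginary-part cancellation of the dispersive cross-terms (as in Theorem~\ref{Semigroup_theo}) together with Young's inequality on the $\ga$-terms produces a differential inequality of the form
\[
\frac{d}{dt}\|u_t\|_V^2+c\,\|u_{tt}\|_{L^2(\Om)}^2+c'\,\|u_{tt}\|_{L^2(\Ga_1)}^2\le C\int_\Om|u|^{2(p-1)}|u_t|^2\,dx+C\,\|u_t\|_V^2 .
\]
The hard part is to close the nonlinear term $\int_\Om|u|^{2(p-1)}|u_t|^2$: using H\"older's inequality, Gagliardo--Nirenberg interpolation of $\|u_t\|_{L^q(\Om)}$ between $\|u_t\|_V$ and $\|u_t\|_{L^2(\Om)}$ (with Poincar\'e bounding the latter), and the mixed-norm control of $u$ above, one bounds it by $\phi(t)\,\|u_t\|_V^2+\psi(t)$ with $\phi,\psi\in L^1(0,T)$, after which Gronwall gives $\|u_t\|_{L^\infty(0,T;V)}+\|u_{tt}\|_{L^2(0,T;L^2(\Om))}\le C(T,u_0)$. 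It is exactly in matching the integrability of $u$ supplied by the first estimate with the Sobolev/Gagliardo--Nirenberg exponents needed both to absorb the top-order contribution and to keep $\phi$ time-integrable that the restrictions $p\ge2$ if $N=1$, $p\in[2,5]$ if $N=2$, and $p\in[2,11/3]$ if $N=3$ appear; since there is no a priori control of $\|u\|_{L^2(\Om)}$ — a structural effect of the non-standard boundary condition stressed in the Introduction — the interior nonlinearity must be handled entirely through the parabolic smoothing, and it is the presence of $\la>0$, i.e. the extra $L^2(0,T;H^2)\cap L^{2p}(0,T;L^{2p})$ regularity, that makes a nontrivial range of $(N,p)$ possible at all. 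This bookkeeping is the principal obstacle.

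It remains to bootstrap from $u_t\in L^\infty(0,T;V)$ to the full $X_T$-norm. From the equation, $\Delta u=(\la+i\al)^{-1}\bigl(u_t+(\kappa+i\beta)|u|^{p-1}u-\ga u\bigr)$, and using $u_t\in L^\infty(0,T;V)$ (hence, by Sobolev embedding, $u_t\in L^\infty(0,T;L^{q}(\Om))$ for an admissible $q$) together with $u\in L^\infty(0,T;V)$, this lies in $L^\infty(0,T;L^\sigma(\Om))$ for some $\sigma>1$; elliptic regularity for the dynamic/Wentzell problem (the normal trace $\frac{\partial u}{\partial\nu}|_{\Ga_1}=-u_t|_{\Ga_1}$ belongs to $H^{1/2}(\Ga_1)$ with norm controlled by $\|u_t\|_V$) combined with the Sobolev embedding — here one uses $N\le3$, and $p<4$ when $N=3$ — bootstraps in finitely many steps to $u\in L^\infty(0,T;L^\infty(\Om))$, whence $|u|^{p-1}u\in L^\infty(0,T;L^2(\Om))$ and $u\in L^\infty(0,T;H^2(\Om)\cap V)$. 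Continuity in time, which follows from the equation and the bounds just obtained, then upgrades this to $u\in C([0,T];H^2(\Om)\cap V)\cap C^1([0,T];V)$, i.e. $(u,u_t)\in X_T$ with norm depending only on $T$ and $u_0$; by the continuation principle of the first paragraph, the solution extends to all of $[0,\infty)$.
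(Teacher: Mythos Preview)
Your overall strategy is sound and would lead to a proof, but it diverges from the paper's in two key places, and your assertion about where the $(p,N)$ thresholds come from does not match your own argument.

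\textbf{Comparison with the paper.} In Section~7 the paper obtains the $V$-bound on $w=u_t$ by testing the $w$-equation with $-\alpha\Delta\bar w$ (i.e.\ in the $V$ inner product, exactly the mechanism of Theorem~\ref{Semigroup_theo}), which produces the dissipative term $\alpha\lambda\|\Delta w\|_{L^2(\Om)}^2$; the nonlinearity is handled via $|(F_t,\Delta w)|\le \epsilon\|\Delta w\|_{L^2}^2+C_\epsilon\|F_t\|_{L^2}^2$ and the very mild bound $\|F_t(u,w)\|_{L^2(\Om)}\le C(E(0))\|w\|_V$, whence Gronwall (Lemma~7.3). The restrictions $p\le5$ ($N=2$) and $p\le11/3$ ($N=3$) do \emph{not} enter at this stage; they arise later, when the paper bounds $\|u_t\|_{C([0,T];V)}+\|u\|_{C([0,T];H^2)}$ through the Duhamel/semigroup representation~\eqref{yt1} together with the $\|F_t(u,w)\|_V$ estimates of Lemma~\ref{l:est}~(v)--(vi), which require the $H^2$-power on $u$ to be at most~$2$ so that $\int_0^t\|u\|_{H^2}^2$ (controlled by Lemma~\ref{H1global}) closes the argument. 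Your plan instead tests with $\overline{w_t}$ and bootstraps $u\in L^\infty(0,T;H^2)$ by elliptic regularity for the Neumann problem with datum $-u_t|_{\Gamma_1}\in H^{1/2}(\Gamma_1)$. This is legitimate, but the cancellation you invoke is \emph{not} the Wentzell mechanism of Theorem~\ref{Semigroup_theo}: testing with $\overline{w_t}$ leaves the uncontrolled term $-\alpha\,\Im\int_\Om\nabla w\cdot\nabla\overline{w_t}$, and one must add $\alpha/\lambda$ times the imaginary part of the tested equation to the real part to eliminate it (the analogue, for $w$, of the multiplier $-\alpha\Delta u+\beta|u|^{p-1}u$ used in Lemma~\ref{H1global}). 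You should make this explicit.

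\textbf{On the $(p,N)$ thresholds.} Your claim that the ranges $p\le5$ ($N=2$) and $p\le11/3$ ($N=3$) are dictated by integrability of $\phi(t)$ in $\int_\Om|u|^{2(p-1)}|u_t|^2\le\phi(t)\|u_t\|_V^2$ is not borne out by computation. For $N=2$, H\"older with $\|w\|_{L^{2r'}}\le C\|w\|_V$ and $\|u\|_{L^{2(p-1)r}}\le C\|u\|_V$ (any finite $r'$) gives $\phi\equiv C\|u\|_V^{2(p-1)}\in L^\infty(0,T)$ for every $p\ge2$; for $N=3$, interpolating $\|u\|_{L^{3(p-1)}}$ between $L^6$ (from $V$) and $L^\infty$ (from $H^2$) and using $u\in L^2(0,T;H^2)$ yields $\phi\in L^1$ up to $p\le4$. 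So your method, if carried through, does not naturally single out $5$ and $11/3$; those numbers are specific to the paper's route via $\|F_t\|_V$. Finally, your remark that ``there is no a priori control of $\|u\|_{L^2(\Om)}$'' contradicts your own use of Poincar\'e: since $\Gamma_0\neq\emptyset$, $\|u\|_{L^2}\le C\|\nabla u\|_{L^2}$ is available throughout.
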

Relaxing the smoothness assumption on $u_0$ a little bit, one can get solutions continuous in time at the $H^1$-level.  We first define the notion of weak solutions as follows.
\begin{defin}[Notion of Weak Solutions]\label{Defweak}We say that $ u $ is a weak solution of problem \eqref{GL_equation_1} if given $T>0$ and $$u_0\in\,Q\equiv\{\varphi\in V\text{ such that }\gamma_0\varphi\in L^{p+1}(\Gamma_1)\},$$  $u|_{t=0}=u_0$ and there exists a sequence of global strong solutions $u_\mu$ with initial data $u_{\mu,0}$ such that $ u_{\mu,0}\rightarrow u_0 $ in $Q$ and $u_\mu\rightarrow u$ in $ \,C([0,T],\,V)\,\cap\,L^2(0,T;\,H^2(\Om)) $, $u_{\mu}'\rightarrow u'$ in $\,L^2(0,T;\,L^2(\Om)) $, and $\gamma_0u_{\mu}'\rightarrow \gamma_0u'$ in $L^2(0,T;\,L^2(\Gamma_1))$.
	 			 	\end{defin}

Our next result is  the following theorem on the existence and uniqueness of weak solutions:
	 			 	\begin{theo}[Weak Solutions]\label{main_density_theo}Let $N\le 3$, $\beta>0$, $ u_0\in\,V  $ so that $\gamma_0u_0\in L^{p+1}(\Gamma_1)$ and $ (p,N) $ satisfy the conditions given in Theorem 2.6.  Then, problem \eqref{GL_equation_1} possesses  a unique (weak) solution in the sense of Definition \eqref{Defweak}.
	 			 	\end{theo}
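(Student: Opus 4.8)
The strategy is to build the weak solution precisely as prescribed by Definition \ref{Defweak}: approximate $u_0$ by more regular data, solve globally in the strong sense by Theorems \ref{T1}--\ref{GlobalThe}, and pass to the limit in the topologies listed there. Thus the proof splits into a density step and a stability (Cauchy) estimate for global strong solutions, and uniqueness will drop out of the latter. For the density step, I would first show that the set of initial data admissible for the strong theory --- those $\varphi\in H^2(\Omega)\cap V$ for which $(\varphi,w)\in X_0$ with $w=(\lambda+i\alpha)\Delta\varphi-(\kappa+i\beta)|\varphi|^{p-1}\varphi+\gamma\varphi$, which in particular forces the Wentzell compatibility condition $\partial_\nu\varphi=-w$ on $\Gamma_1$ --- is dense in $Q=\{\varphi\in V:\gamma_0\varphi\in L^{p+1}(\Gamma_1)\}$ for the norm $\|\cdot\|_V+\|\gamma_0\cdot\|_{L^{p+1}(\Gamma_1)}$. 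This is a standard mollification/truncation argument, adjusted near $\Gamma_1$ so that the boundary trace meets the compatibility condition (using that $\Gamma_0$ and $\Gamma_1$ are disjoint and smooth and that $\gamma_0:V\to L^{p+1}(\Gamma_1)$ is bounded in the admissible range of $(p,N)$). Fix $u_0\in V$ with $\gamma_0u_0\in L^{p+1}(\Gamma_1)$ and choose such a sequence $u_{\mu,0}\to u_0$ in $Q$.

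By Theorems \ref{T1} and \ref{GlobalThe} (here $N\le3$, $\beta>0$ and the exponent restrictions are used), each $u_{\mu,0}$ generates a global strong solution $u_\mu$ with $(u_\mu,u_\mu')\in X_T$ for all $T>0$. The crucial point is that the a priori bounds underlying the proof of Theorem \ref{GlobalThe} depend only on $\|u_0\|_Q$ and $T$, hence are uniform in $\mu$: the energy functional (dissipated precisely because $\beta>0$) controls $\|u_\mu\|_{L^\infty(0,T;V)}$, $\|\gamma_0u_\mu\|_{L^\infty(0,T;L^{p+1}(\Gamma_1))}$, $\lambda\int_0^T\|\Delta u_\mu\|_{L^2(\Omega)}^2\,ds$ and $\int_0^T\|u_\mu\|_{L^{2p}(\Omega)}^{2p}\,ds$, while the first-order estimate behind Corollary \ref{corollary_inho} controls $\int_0^T\|u_\mu'\|_{L^2(\Omega)}^2\,ds$ and $\int_0^T\|\gamma_0u_\mu'\|_{L^2(\Gamma_1)}^2\,ds$. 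In particular $\{u_\mu\}$ is bounded, uniformly in $\mu$, in $L^\infty(0,T;V)\cap L^2(0,T;H^2(\Omega))$.

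Next comes the stability estimate. Set $v=u_\mu-u_\nu$; it solves the linear inhomogeneous dynamic-boundary problem with interior forcing $-(\kappa+i\beta)\big(|u_\mu|^{p-1}u_\mu-|u_\nu|^{p-1}u_\nu\big)+\gamma v$, with $\partial_\nu v=-v_t$ on $\Gamma_1$, $v=0$ on $\Gamma_0$, and datum $v(0)=u_{\mu,0}-u_{\nu,0}$. Testing the equation with $-\overline{\Delta v}$, integrating over $\Omega$ and taking real parts, the parabolic part yields the dissipation $\lambda\|\Delta v\|_{L^2(\Omega)}^2$ and the dynamic boundary condition turns the $v_t$-term into $\tfrac12\tfrac{d}{dt}\|v\|_V^2+\|\gamma_0v_t\|_{L^2(\Gamma_1)}^2$, the only remaining boundary term being a lower-order one proportional to $\gamma$, absorbed via trace and Poincaré inequalities; the nonlinear contribution is bounded by
$$\big\||u_\mu|^{p-1}u_\mu-|u_\nu|^{p-1}u_\nu\big\|_{L^2(\Omega)}\le C\big(\|u_\mu\|_{L^{q}(\Omega)}^{p-1}+\|u_\nu\|_{L^{q}(\Omega)}^{p-1}\big)\|v\|_{L^{q'}(\Omega)}\le c(t)\,\|v\|_V ,$$
where $\tfrac1q+\tfrac1{q'}=\tfrac12$ and $c(\cdot)$ is produced by Gagliardo--Nirenberg interpolation between the uniform $L^\infty(0,T;V)$- and $L^2(0,T;H^2(\Omega))$-bounds of the previous step; this is exactly where $N\le3$ and the $(p,N)$-ranges of Theorem \ref{GlobalThe} enter, being chosen precisely so that $c(\cdot)\in L^2(0,T)$. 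Grönwall's inequality then gives
$$\|v\|_{C([0,T];V)}^2+\int_0^T\!\Big(\|\Delta v\|_{L^2(\Omega)}^2+\|v_t\|_{L^2(\Omega)}^2+\|\gamma_0v_t\|_{L^2(\Gamma_1)}^2\Big)\,ds\le C(T,\|u_0\|_Q)\,\|u_{\mu,0}-u_{\nu,0}\|_V^2 ,$$
the $\|v_t\|_{L^2(\Omega)}^2$-term being read off from the equation. The $L^2(0,T;H^2(\Omega))$-convergence then follows by combining the control of $\|\Delta v\|_{L^2(0,T;L^2(\Omega))}$ with the mixed elliptic regularity estimate $\|w\|_{H^2(\Omega)}\lesssim\|\Delta w\|_{L^2(\Omega)}+\|\partial_\nu w\|_{H^{1/2}(\Gamma_1)}$ (valid since $\Gamma_0\cap\Gamma_1=\emptyset$), the uniform $L^2(0,T;H^2(\Omega))$-bound, and a further (higher-order) estimate on the boundary trace of $v_t$ --- this last implication being the subtle point, see below.

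Since $u_{\mu,0}\to u_0$ in $Q$ forces $\|u_{\mu,0}-u_{\nu,0}\|_V\to0$, the displays above show that $\{u_\mu\}$ is Cauchy in $C([0,T];V)\cap L^2(0,T;H^2(\Omega))$, $\{u_\mu'\}$ is Cauchy in $L^2(0,T;L^2(\Omega))$ and $\{\gamma_0u_\mu'\}$ is Cauchy in $L^2(0,T;L^2(\Gamma_1))$; the common limit $u$ is, by Definition \ref{Defweak}, a weak solution with $u|_{t=0}=u_0$. Uniqueness is immediate from the same estimate: if $u$ is approximated by $(u_\mu)$ with data $u_{\mu,0}\to u_0$ and $\tilde u$ by $(\tilde u_\mu)$ with data $\tilde u_{\mu,0}\to u_0$, then $\|u_\mu-\tilde u_\nu\|$ in all of the above norms is controlled by $\|u_{\mu,0}-\tilde u_{\nu,0}\|_V\to0$, so $u=\tilde u$. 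I expect the stability estimate to be the main obstacle, for two reasons. First, controlling the difference of the power nonlinearities by an $L^1$-in-time Grönwall coefficient that depends only on the surviving uniform bounds is what pins down the exact $(p,N)$-ranges and makes the $\lambda>0$ smoothing indispensable. Second, reaching the full $H^2(\Omega)$-level of Definition \ref{Defweak} is delicate because only $L^2$ (not $H^{1/2}$) information on the boundary trace of $v_t$ is a priori uniform in $\mu$, so the $H^2$-convergence has to be squeezed out of the $\|\Delta v\|_{L^2(0,T;L^2(\Omega))}$-control together with elliptic regularity and a carefully weighted higher-order estimate, rather than from a naive differentiated energy identity, which would fail because $v_t(0)$ need not be small. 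The density step, though routine, also demands care with the Wentzell compatibility condition.
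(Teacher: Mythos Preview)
Your proposal follows essentially the same route as the paper: approximate $u_0$ in $Q$ by data admissible for the strong theory, invoke Theorems \ref{T1}--\ref{GlobalThe} for the approximants, and prove a Cauchy estimate on differences via an energy identity plus Gr\"onwall, with uniqueness obtained by the same estimate. The paper's stability proof (its Lemma in Section~8) tests the difference equation with the nonlinear multiplier $-\alpha\Delta z_{\mu,\sigma}+\beta\big(|u_\mu|^{p-1}u_\mu-|u_\sigma|^{p-1}u_\sigma\big)$ against $z_{\mu,\sigma}'$, whereas you test with $-\overline{\Delta v}$; both produce the same dissipative terms $\tfrac{d}{dt}\|\nabla v\|_{L^2}^2$, $\|v_t\|_{L^2(\Gamma_1)}^2$ and $\lambda\|\Delta v\|_{L^2}^2$, and both reduce the nonlinear remainder to a Lipschitz bound controlled by the $H^2$ regularity of the strong approximants.

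Two remarks. First, you overcomplicate the passage to $L^2(0,T;H^2(\Omega))$: no $H^{1/2}(\Gamma_1)$ trace control on $v_t$ is needed. The paper simply invokes the equivalence $\|z\|_{H^2(\Omega)}\le C(\|z\|_V+\|\Delta z\|_{L^2(\Omega)})$ (its estimate (7.17)), so once $v\to 0$ in $C([0,T];V)$ and $\Delta v\to 0$ in $L^2(0,T;L^2(\Omega))$ the $H^2$-convergence follows immediately; your elliptic-regularity detour through $\partial_\nu v\in H^{1/2}$ is unnecessary. Second, you are more careful than the paper about the uniformity of constants: you insist the Gr\"onwall coefficient depends only on $\|u_0\|_Q$ via the energy bounds of Lemma \ref{H1global}, while the paper's written Lipschitz constants carry $\|u_{\mu,0}\|_{H^2}$, which is not uniform in $\mu$. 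Your formulation is the right one to make the Cauchy argument airtight.
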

	 			 	
	 			 	\begin{rem}
	 			 		Theorem \eqref{main_density_theo} gives a more regular weak solution than that obtained in the corresponding Schrödinger problem in \cite{Dynamic_BC}, where the weak solution only satisfies $ u\in L^\infty(0,T;\,V) $ with $ u^\prime\in\,L^2(0,T;\,V^\prime). $ Moreover, the uniqueness in NLS is proved only in the case that the nonlinearity is globally Lipschitz in $ V $, whereas for the CGLE, uniqueness is proved in a more general setting, thanks to  the smoothing effect of the parabolic component of the Ginzburg-Landau operator.
	 			 	\end{rem}
At this point, a natural question is to ask whether the solutions of the CGLE with dynamic boundary conditions get close to the solutions of NLS with same type of boundary conditions as the parameter pair $\epsilon\equiv (\lambda,\kappa)\rightarrow 0$.  We show that this is indeed true by the following theorems.

       \begin{theo}[Inviscid Limits I]\label{Inviscid0}
       Suppose that $u_\epsilon$, where $\epsilon=(\lambda,\kappa)$, is a global (weak) solution to the idbvp for the CGLE with the initial condition $u_0\in Q$ as in Theorem \ref{main_density_theo}.  Then, there exists
       $u\in L^{\infty}(0,T;V)$ with $u_t\in L^{2}(0,T;V')$ such that a subsequence of $u_\epsilon$ (still denoted same) satisfies
            \begin{eqnarray}
            u_\epsilon &\stackrel{\hspace{-1mm}}{\rightharpoonup} &u \mbox{ weakly star in}
	 	   \,L^\infty(0,T;\,V)\,,\\
             \partial_t u_\epsilon &\stackrel{\hspace{-1mm}}{\rightharpoonup} &u_t \mbox{ weakly in}
	 	   \,L^2(0,T;\,V')\,
	 	   \end{eqnarray} as $\epsilon\rightarrow 0$, and most importantly $u$ solves the idbvp for the NLS in the weak sense.

       \end{theo}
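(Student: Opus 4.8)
The plan is to establish uniform-in-$\epsilon$ a priori bounds on $u_\epsilon$, extract weakly(-star) convergent subsequences by Banach--Alaoglu, and then pass to the limit in the weak formulation of the idbvp for the CGLE, checking that the resulting limit $u$ satisfies the weak formulation of the idbvp for the NLS. The first step is to recall, from the global well-posedness theory of Theorem \ref{main_density_theo} and the energy identities referenced in the introduction (the functionals $E(t)$ in \eqref{EtDef}, \eqref{EtDef2}), the energy estimate for the weak solution $u_\epsilon$. Since $u_{0}\in Q$ is fixed independently of $\epsilon$ and all problematic terms in the energy carry favorable signs under the hypothesis $\beta>0$, I expect the energy inequality to yield bounds
$$\|u_\epsilon\|_{L^\infty(0,T;V)}\le C,\qquad \|\gamma_0 u_\epsilon\|_{L^\infty(0,T;L^{p+1}(\Gamma_1))}\le C,$$
with $C$ independent of $\epsilon=(\lambda,\kappa)$ (using here that $\lambda,\kappa$ are small, in particular bounded). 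In addition the parabolic terms give $\lambda\int_0^t\|\Delta u_\epsilon\|_{L^2(\Omega)}^2\,ds\le C$ and $\kappa\int_0^t\|u_\epsilon\|_{L^{2p}(\Omega)}^{2p}\,ds\le C$; these will not survive the limit as coefficients but are needed to control error terms.

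Next I would establish a uniform bound on $\partial_t u_\epsilon$ in $L^2(0,T;V')$. Testing the equation $u_t=(\lambda+i\alpha)\Delta u_\epsilon-(\kappa+i\beta)|u_\epsilon|^{p-1}u_\epsilon+\gamma u_\epsilon$ against $\varphi\in V$ and integrating by parts, the boundary term produced by $\Delta u_\epsilon$ is exactly $-\int_{\Gamma_1}\partial_\nu u_\epsilon\,\bar\varphi$, which by the dynamic boundary condition equals $\int_{\Gamma_1}\partial_t u_\epsilon\,\bar\varphi$; thus the natural pairing is the Wentzell one and $\langle u_t,\varphi\rangle_{V'}$ is controlled by $\|u_\epsilon\|_V$, $\lambda\|\Delta u_\epsilon\|_{L^2(\Omega)}$, and the nonlinear terms. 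The Gagliardo--Nirenberg / Sobolev embeddings in dimension $N\le 3$ with the admissible range of $p$ from Theorem \ref{GlobalThe} bound $\||u_\epsilon|^{p-1}u_\epsilon\|_{V'}$ by a power of $\|u_\epsilon\|_V$, hence uniformly in $L^\infty(0,T)$, while the term $\lambda\Delta u_\epsilon$ is bounded in $L^2(0,T;L^2(\Omega))\hookrightarrow L^2(0,T;V')$ by the previous step (note $\lambda$ small makes this term small, which is even better). This yields $\|\partial_t u_\epsilon\|_{L^2(0,T;V')}\le C$ uniformly.

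With these bounds in hand, Banach--Alaoglu gives a subsequence and a limit $u$ with $u_\epsilon\rightharpoonup u$ weak-star in $L^\infty(0,T;V)$ and $\partial_t u_\epsilon\rightharpoonup u_t$ weakly in $L^2(0,T;V')$; an Aubin--Lions--Simon compactness argument (using $V\hookrightarrow\hookrightarrow L^2(\Omega)$ compactly and the trace $V\to L^2(\Gamma_1)$ compact) upgrades this to strong convergence $u_\epsilon\to u$ in $C([0,T];L^2(\Omega))$ and in $L^2(0,T;L^{q}(\Omega))$ for suitable $q$, as well as strong convergence of the traces in $L^2(0,T;L^2(\Gamma_1))$. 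Then I pass to the limit in the weak formulation: the linear terms pass by weak convergence; the term $(\lambda+i\alpha)\Delta u_\epsilon$ splits as $i\alpha\Delta u_\epsilon+\lambda\Delta u_\epsilon$, where $\lambda\Delta u_\epsilon\to 0$ in $L^2(0,T;V')$ because $\lambda\to 0$ while $\|\Delta u_\epsilon\|_{L^2(0,T;L^2(\Omega))}$ is at worst $O(\lambda^{-1/2})$, so $\lambda\|\Delta u_\epsilon\|_{L^2L^2}=O(\lambda^{1/2})\to 0$, and $i\alpha\Delta u_\epsilon\rightharpoonup i\alpha\Delta u$ in $\mathcal D'$; the nonlinear term $(\kappa+i\beta)|u_\epsilon|^{p-1}u_\epsilon$ splits analogously, with $\kappa|u_\epsilon|^{p-1}u_\epsilon\to 0$ (again using $\kappa\to 0$ and the uniform bound on $\||u_\epsilon|^{p-1}u_\epsilon\|_{L^\infty(0,T;V')}$) and $i\beta|u_\epsilon|^{p-1}u_\epsilon\to i\beta|u|^{p-1}u$ thanks to the strong convergence in $L^2(0,T;L^{q}(\Omega))$ for $q$ in the admissible range; finally the boundary term converges because $\gamma_0 u_\epsilon\to\gamma_0 u$ strongly in $L^2(0,T;L^2(\Gamma_1))$. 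The compatibility of initial data passes since $u_\epsilon\in C([0,T];L^2(\Omega))$ converges strongly there and $u_0$ is $\epsilon$-independent.

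The main obstacle I anticipate is twofold. First, making the $\epsilon$-uniformity of the energy estimate airtight: one must carefully track which terms in $E(t)$ are sign-definite under $\beta>0$ and which (e.g.\ $\frac{1}{p+1}(\alpha\kappa+\beta\lambda)\|u\|_{L^{p+1}(\Gamma_1)}^{p+1}$) vanish in the limit, ensuring no constant blows up as $(\lambda,\kappa)\to 0$. Second, and more delicate, is the passage to the limit in the \emph{nonlinear} interior term: identifying the weak limit of $|u_\epsilon|^{p-1}u_\epsilon$ as $|u|^{p-1}u$ requires strong $L^r_{t,x}$ convergence in an exponent high enough to dominate the $p$-growth, which is exactly where the restriction on $(p,N)$ from Theorem \ref{GlobalThe} and the Aubin--Lions step must be exploited with the correct interpolation; here one needs $u_\epsilon\to u$ strongly in $L^{p}(0,T;L^{2p}(\Omega))$ or similar, which follows from the uniform $L^\infty(0,T;V)$ bound, the $L^2(0,T;V')$ bound on $\partial_t u_\epsilon$, and compact Sobolev embedding in the admissible range. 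Everything else is routine once these two points are settled.
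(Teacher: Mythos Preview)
Your approach is essentially the same as the paper's: uniform-in-$\epsilon$ energy bounds from Lemma~\ref{H1global}, weak(-star) compactness via Banach--Alaoglu, Aubin--Lions for strong convergence in $L^2(0,T;L^2(\Omega))$, and passage to the limit in the weak formulation. The one place where the paper proceeds more simply is exactly the step you flag as your second ``main obstacle'': instead of chasing strong convergence of $u_\epsilon$ in an $L^r_{t,x}$ space with $r$ large enough to dominate the $p$-growth, the paper just observes that $|u_\epsilon|^{p-1}u_\epsilon$ is uniformly bounded in $L^\infty(0,T;L^{(p+1)'}(\Omega))$ (directly from the energy control of $\|u_\epsilon\|_{L^{p+1}(\Omega)}$), extracts a weak-star limit $\xi$, and identifies $\xi=|u|^{p-1}u$ by combining the a.e.\ convergence $u_\epsilon\to u$ (from Aubin--Lions) with this uniform bound via the standard Lions lemma (a.e.\ convergence plus boundedness in $L^q$ implies weak convergence). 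This sidesteps the interpolation concern entirely and requires no $(p,N)$ restriction at this particular step.
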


 	   \begin{theo}[Inviscid Limits II]\label{Inviscid}
Let $N=2$ and $p=3$.  Suppose that $u_\epsilon$ is a global strong solution of the idbvp for the CGLE with the initial condition $u_\epsilon^0$ and $u$ is a global strong solution of the idbvp for the NLS with initial condition $u_0$ such that $u_\epsilon^0\rightarrow u_0$ in $V$ as $\epsilon=(\lambda,\kappa)\rightarrow 0$.  Then, $$\|u_\epsilon-u\|_V=O(\lambda)+O(\kappa)$$ as $\epsilon=(\lambda,\kappa)\rightarrow 0.$
 	    \end{theo}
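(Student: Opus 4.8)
The plan is to set $w:=u_\epsilon-u$ and to prove an energy estimate for $w$ whose constants are \emph{uniform in} $\epsilon=(\lambda,\kappa)$. Subtracting the NLS equation solved by $u$ from the CGLE equation \eqref{GL_equation_1} solved by $u_\epsilon$, and rearranging via $(\lambda+i\alpha)\Delta u_\epsilon-i\alpha\Delta u=(\lambda+i\alpha)\Delta w+\lambda\Delta u$ together with $(\kappa+i\beta)|u_\epsilon|^{p-1}u_\epsilon-i\beta|u|^{p-1}u=(\kappa+i\beta)\bigl(|u_\epsilon|^{p-1}u_\epsilon-|u|^{p-1}u\bigr)+\kappa|u|^{p-1}u$, one finds that $w$ solves the linear nonhomogeneous idbvp \eqref{LSDynf} with $w(0)=u_\epsilon^0-u_0$ and internal forcing
$$F:=\lambda\Delta u-\kappa|u|^{p-1}u+\gamma w-(\kappa+i\beta)\bigl(|u_\epsilon|^{p-1}u_\epsilon-|u|^{p-1}u\bigr).$$
The ``source'' piece $\lambda\Delta u-\kappa|u|^{p-1}u$ involves only the \emph{fixed} NLS strong solution $u$, and since $u$ is a strong solution with $N=2$, $p=3$ (so $H^2(\Omega)\hookrightarrow L^\infty(\Omega)$), it belongs to $L^2(0,T;V)$ with norm $O(\lambda)+O(\kappa)$. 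In particular $F\in L^2(0,T;V)$, so Corollary \ref{corollary_inho} already gives existence and uniqueness of $w$; what the theorem demands is the quantitative rate.

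Before estimating, I would record the $\epsilon$-uniform a priori bounds. Because $\beta>0$, the natural energy of \eqref{GL_equation_1} is dissipative and the $\lambda,\kappa$-terms only improve it, so $\sup_{[0,T]}\|u_\epsilon(t)\|_V\le C$, with $C$ depending only on $\sup_\epsilon\|u_\epsilon^0\|_V$ (finite, since $u_\epsilon^0\to u_0$ in $V$) and on $T$, not on $\epsilon$; combined with $N=2$ this yields uniform bounds for $\|u_\epsilon\|_{L^q(\Omega)}$, $q<\infty$, via $H^1(\Omega)\hookrightarrow L^q(\Omega)$. The fixed solution $u$ lies in $C([0,T];H^2(\Omega)\cap V)\cap C^1([0,T];V)$, so every quantity built from $u$ alone ($\Delta u$, $u_t$, $|u|^{p-1}u$, their gradients, their traces, their time derivatives) is bounded on $[0,T]$.

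Next is the core energy estimate, carried out exactly as in the proofs of Theorem \ref{theo T}/Corollary \ref{corollary_inho}: multiply the $w$-equation by $\overline{w_t}$, integrate over $\Omega$, use $\partial_\nu w=-w_t$ on $\Gamma_1$ and $w|_{\Gamma_0}=0$, and split into real and imaginary parts. Eliminating the awkward cross term $\Im\int_\Omega\nabla w\cdot\overline{\nabla w_t}$ via the equation (it equals $\alpha\|\Delta w\|_{L^2(\Omega)}^2+\Im\int_\Omega F\,\overline{\Delta w}$), the imaginary part collapses to the identity $\tfrac12\tfrac{d}{dt}\|\nabla w\|_{L^2(\Omega)}^2+\|w_t\|_{L^2(\Gamma_1)}^2+\lambda\|\Delta w\|_{L^2(\Omega)}^2=-\Re\int_\Omega F\,\overline{\Delta w}\,dx$, while the real part supplies an extra dissipation $\|w_t\|_{L^2(\Omega)}^2$ with an $\epsilon$-independent coefficient. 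On the right-hand side, the small source pieces $\lambda\Delta u$ and $-\kappa|u|^{p-1}u$ are handled directly, respectively integrated by parts in space and time, so that they contribute $O(\lambda^2)+O(\kappa^2)$ plus terms absorbable by the dissipation or passed into a modified energy equivalent to $\|w\|_V^2$; the term $\gamma w$ is controlled, after integration by parts, by $\|w\|_V^2$ and the dissipation. A Gronwall argument then yields
$$\sup_{t\in[0,T]}\|w(t)\|_V^2\le C_T\bigl(\|u_\epsilon^0-u_0\|_V^2+\lambda^2+\kappa^2\bigr),$$
with $C_T$ independent of $\epsilon$, that is $\|u_\epsilon-u\|_{C([0,T];V)}=O(\lambda)+O(\kappa)$.

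The hard part is precisely this $\epsilon$-uniformity of $C_T$. The dangerous term is the nonlinearity difference $(\kappa+i\beta)\bigl(|u_\epsilon|^{p-1}u_\epsilon-|u|^{p-1}u\bigr)$: it is \emph{not} small, only $O(\|w\|_V)$, and the dissipation naturally paired with it via the identity above carries a factor $\lambda$, so a crude estimate would force a Gronwall constant of order $e^{C_TT/\lambda}$, which is worthless. Closing the argument uniformly requires exploiting the precise algebra of the three dissipation terms ($\|w_t\|_{L^2(\Omega)}^2$, $\|w_t\|_{L^2(\Gamma_1)}^2$, $\lambda\|\Delta w\|_{L^2(\Omega)}^2$), the pointwise inequality $\bigl||z|^2z-|v|^2v\bigr|\le C(|z|^2+|v|^2)|z-v|$, sharp $N=2$ Sobolev/interpolation inequalities such as $\|w\|_{L^4(\Omega)}\le C\|w\|_{L^2(\Omega)}^{1/2}\|w\|_{H^1(\Omega)}^{1/2}$, the $\epsilon$-uniform $V$-bound on $u_\epsilon$, and—most likely—first proving the easier $L^2$-level inviscid rate $\|w\|_{C([0,T];L^2(\Omega))}=O(\lambda)+O(\kappa)$ (via the defocusing $L^2$-energy estimate) and then bootstrapping. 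These are exactly the ingredients that the hypotheses $N=2$, $p=3$ (globally defined strong solutions of both CGLE and NLS, tightest Sobolev inequalities) and $\beta>0$ (dissipative energy, hence $\epsilon$-uniform a priori bounds) are designed to furnish.
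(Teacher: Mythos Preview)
Your setup and energy identity are fine, but you have overlooked the single observation that makes the proof routine: both $u_\epsilon$ and $u$ are by hypothesis \emph{global strong solutions}, so $u_\epsilon,u\in C([0,T];H^2(\Omega)\cap V)$. The paper exploits this $H^2$ regularity directly, and once you do so, the ``dangerous'' nonlinearity term ceases to be dangerous and no $L^2$--level bootstrap is needed.

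Concretely, the paper multiplies the $w$--equation by $-\Delta\bar w$ (which is equivalent to your multiplier $\overline{w_t}$ after your cross--term elimination), but then integrates the nonlinear term by parts \emph{once more}: $\beta\,\Im\bigl(|u_\epsilon|^2u_\epsilon-|u|^2u,\Delta w\bigr)_{L^2(\Omega)}$ becomes $-\beta\,\Im\bigl(\nabla(|u_\epsilon|^2u_\epsilon-|u|^2u),\nabla w\bigr)_{L^2(\Omega)}$ plus a $\Gamma_1$ boundary term. Since $N=2$, $p=3$, and $u_\epsilon,u$ are bounded in $C([0,T];H^2)$, one has $\|\nabla(|u_\epsilon|^2u_\epsilon-|u|^2u)\|_{L^2(\Omega)}\le C\|w\|_V$ and $\||u_\epsilon|^2u_\epsilon-|u|^2u\|_{L^2(\Gamma_1)}^2\le C\|w\|_V^2$, with $C=C\bigl(\|u_\epsilon\|_{H^2},\|u\|_{H^2}\bigr)$. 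Thus the nonlinearity contributes only $C\int_0^t\|w\|_V^2\,ds$ plus terms absorbed by the boundary dissipation $\|w_t\|_{L^2(\Gamma_1)}^2$; Gronwall closes immediately, with no $\lambda^{-1}$ in the exponent.

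For the small source pieces, the paper keeps $\lambda\Delta u_\epsilon$ and $\kappa|u_\epsilon|^2u_\epsilon$ (rather than your $\lambda\Delta u$, $\kappa|u|^2u$; either choice works), and estimates them crudely as $\lambda\|\Delta u_\epsilon\|_{L^2}\|\Delta w\|_{L^2}\le C\lambda$ and $\kappa\|u_\epsilon\|_{L^6}^3\|\Delta w\|_{L^2}\le C\kappa$, again using that $\|\Delta w\|_{L^2}$ is bounded since $u_\epsilon,u\in C([0,T];H^2)$. Your proposed route of integrating these in time and absorbing into the $\lambda\|\Delta w\|^2$ dissipation is unnecessary.

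In short: the gap in your argument is that you recorded only an $\epsilon$--uniform $V$ bound on $u_\epsilon$, whereas the theorem hands you strong solutions and hence $H^2$ control. With that control, one extra integration by parts on the nonlinear term turns the ``hard part'' into a one--line Lipschitz estimate in $V$, and the proof is a direct Gronwall.
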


Finally, we prove that the solutions of the idbvp for the CGLE decay to zero exponentially fast if $\gamma\le 0$.  This is easy to prove with $\gamma<0$, and we have the following theorem.

 	   \begin{theo}[Stabilization I]\label{decaythm}Let $$u_0\in Q\equiv\{\varphi\in V\cap L^{p+1}(\Omega) \text{ such that }\gamma_0\varphi\in L^{p+1}(\Gamma_1)\}$$ and
$u$ be the corresponding global weak solution of the idbvp for the CGLE (eq. \ref{GL_equation_1}) with $\gamma<0$ as in Theorem \ref{main_density_theo}.  Then, $$F(t)\le E_0e^{-|\gamma| t} \text{ for } t\ge 0,$$ where \begin{equation}\label{defFt}F(t)\equiv \frac{\alpha}{2}\|\nabla u(t)\|_{L^2(\Omega)}^2+\frac{\beta}{p+1}\|u(t)\|_{L^{p+1}(\Omega)}^{p+1}\end{equation} and
\begin{multline}E_0\equiv \frac{\alpha}{2} \|\nabla u_0\|_{L^2(\Omega)}^2 +\frac{\beta}{p+1}\|u_0\|_{L^{p+1}(\Omega)}^{p+1} -\frac{\alpha\gamma}{2}\|u_0\|_{L^2(\Gamma_1)}^2\\+\frac{1}{p+1}(\alpha\kappa+\beta\lambda)\|u_0\|_{L^{p+1}(\Gamma_1)}^{p+1}.\end{multline}
 	    \end{theo}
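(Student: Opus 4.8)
\emph{Proof proposal.} The plan is to deduce the exponential decay from the energy identity that already underlies the global well-posedness of strong solutions, specialized to the case $\gamma<0$, and then to pass from strong to weak solutions.

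First, a reduction. By Definition~\ref{Defweak}, the weak solution $u$ is a limit of global strong solutions $u_\mu$ with $u_{\mu,0}\to u_0$ in $Q$ and $u_\mu\to u$ in $C([0,T];V)$; in the admissible range of $(p,N)$ in Theorem~\ref{GlobalThe} one has $V\hookrightarrow L^{p+1}(\Omega)$, so $F_\mu(t)\to F(t)$ for every $t\in[0,T]$, while $E_0^\mu\to E_0$ because the $Q$-norm controls the $V$-, $L^{p+1}(\Omega)$- and (through the trace) $L^{p+1}(\Gamma_1)$- and $L^2(\Gamma_1)$-norms of the datum. Hence it suffices to prove $F(t)\le E_0 e^{-|\gamma|t}$ for a global strong solution and then pass to the limit in $F_\mu(t)\le E_0^\mu e^{-|\gamma|t}$. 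For such a solution put
\[
E(t)\;\equiv\;F(t)-\frac{\alpha\gamma}{2}\,\|u(t)\|_{L^2(\Gamma_1)}^2+\frac{1}{p+1}\bigl(\alpha\kappa+\beta\lambda\bigr)\|u(t)\|_{L^{p+1}(\Gamma_1)}^{p+1},
\]
with $F$ as in \eqref{defFt}; since $\gamma<0$, $\alpha,\kappa,\lambda>0$ and $\beta>0$, we have $E(0)=E_0$ and $E(t)\ge F(t)\ge 0$.

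To obtain the energy identity I would test \eqref{GL_equation_1} with $\overline{u_t}$ in $L^2(\Omega)$, integrate by parts using $u=0$ on $\Gamma_0$ and $\partial_\nu u=-u_t$ on $\Gamma_1$, take imaginary parts, and then substitute $u_t=(\lambda+i\alpha)\Delta u-(\kappa+i\beta)|u|^{p-1}u+\gamma u$ (legitimate up to $\Gamma_1$ since $u\in C([0,T];H^2(\Omega)\cap V)$) into the remaining ``imaginary'' terms. Exactly as in the energy law behind Theorem~\ref{GlobalThe}, the boundary traces of $\Delta u$ reduce to lower-order boundary quantities via $\partial_\nu u=-u_t$, all the imaginary remainders cancel in pairs, and one is left with
\[
\frac{d}{dt}E(t)+\mathcal D(t)=\gamma\bigl(\alpha\|\nabla u(t)\|_{L^2(\Omega)}^2+\beta\|u(t)\|_{L^{p+1}(\Omega)}^{p+1}\bigr),
\]
where $\mathcal D(t)\ge 0$ collects the parabolic dissipation: $\alpha\|u_t\|_{L^2(\Gamma_1)}^2+\alpha\lambda\|\Delta u\|_{L^2(\Omega)}^2$, a positive multiple of $\|u\|_{L^{2p}(\Omega)}^{2p}$, and a nonnegative term built from $\int_\Omega|u|^{p-1}|\nabla u|^2\,dx$. (Its time-integrated version is the energy functional of Section~10, which also contains $\alpha\lambda\int_0^t\|\Delta u\|_{L^2(\Omega)}^2\,ds$ and a multiple of $\int_0^t\|u(s)\|_{L^{2p}(\Omega)}^{2p}\,ds$.)

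Since $\gamma<0$ and $\mathcal D\ge 0$, the identity gives at once $E'(t)\le 0$, hence $E(t)\le E_0$ and in particular $F(t)\le E_0$. To upgrade this to the stated rate, note that $p\ge 2$ gives $\gamma\bigl(\alpha\|\nabla u\|^2+\beta\|u\|_{L^{p+1}(\Omega)}^{p+1}\bigr)\le -2|\gamma|F(t)$, and the boundary gap $E(t)-F(t)$ is dominated --- using the trace and Poincaré inequalities together with the a priori bound $E\le E_0$ --- by a fixed fraction of $\mathcal D(t)+|\gamma|\bigl(\alpha\|\nabla u\|^2+\beta\|u\|_{L^{p+1}(\Omega)}^{p+1}\bigr)$; inserting this into the identity yields $E'(t)\le -|\gamma|E(t)$, so $\frac{d}{dt}\bigl(e^{|\gamma|t}E(t)\bigr)\le 0$, whence $E(t)\le E_0 e^{-|\gamma|t}$ and therefore $F(t)\le E_0 e^{-|\gamma|t}$. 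Passing to the limit along $u_\mu$ finishes the proof. The main obstacle is the energy identity itself: the interior--boundary coupling and the non-self-adjoint coefficients $\lambda+i\alpha$, $\kappa+i\beta$ make the cancellation of the imaginary remainders and the sign of $\mathcal D$ delicate; closely related is the trace-based absorption needed to pass from $E'(t)\le 0$ to the sharp inequality $E'(t)\le -|\gamma|E(t)$. The Grönwall step and the density/limit passage use only the well-posedness already established in Theorems~\ref{T1}, \ref{GlobalThe} and \ref{main_density_theo}.
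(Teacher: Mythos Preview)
Your energy identity is essentially the one established in Lemma~\ref{H1global}, and the density reduction to strong solutions is fine. The problem is the ``upgrade'' step. You claim that the boundary gap
\[
E(t)-F(t)=\frac{\alpha|\gamma|}{2}\|u(t)\|_{L^2(\Gamma_1)}^2+\frac{\alpha\kappa+\beta\lambda}{p+1}\|u(t)\|_{L^{p+1}(\Gamma_1)}^{p+1}
\]
can be absorbed into $\mathcal D(t)+|\gamma|\bigl(\alpha\|\nabla u\|_{L^2(\Omega)}^2+\beta\|u\|_{L^{p+1}(\Omega)}^{p+1}\bigr)$ so as to yield the \emph{sharp} differential inequality $E'(t)\le -|\gamma|E(t)$. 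But the trace and Poincar\'e inequalities only give $\|u\|_{L^2(\Gamma_1)}^2\le C_{\mathrm{tr}}\|\nabla u\|_{L^2(\Omega)}^2$ with a domain-dependent constant, so the first boundary term is controlled by $|\gamma|F(t)$ only when $|\gamma|C_{\mathrm{tr}}\le 1$, which need not hold. Worse, there is nothing in $\mathcal D(t)$ or in $|\gamma|F(t)$ that controls $\|u(t)\|_{L^{p+1}(\Gamma_1)}^{p+1}$ with the right constant; any bound via $H^2$-trace and $\|\Delta u\|_{L^2(\Omega)}^2$ again introduces geometry-dependent factors. Your argument, if it could be completed, would at best give $E'(t)\le -cE(t)$ for some $c=c(\Omega,\gamma,\alpha,\beta,\kappa,\lambda,p)$, not the rate $|\gamma|$ claimed in the theorem.

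The paper avoids this obstacle entirely by not aiming at a differential inequality for $E$. From Lemma~\ref{H1global}(ii) one has, for $\gamma<0$,
\[
E(t)\le E(0)+\frac{\alpha\gamma}{2}\int_0^t\|\nabla u(s)\|_{L^2(\Omega)}^2\,ds+\beta\gamma\int_0^t\|u(s)\|_{L^{p+1}(\Omega)}^{p+1}\,ds,
\]
and since $F(t)\le E(t)$ and the right-hand side is $\le E_0+\gamma\int_0^t F(s)\,ds$ (because $\beta\gamma\le \beta\gamma/(p+1)$ when $\gamma<0$), one obtains the integral inequality
\[
F(t)\le E_0-|\gamma|\int_0^t F(s)\,ds.
\]
Gronwall applied to $F$ alone then gives $F(t)\le E_0 e^{-|\gamma|t}$ with no trace constants entering. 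The point is that one never needs to control the boundary part of $E(t)$: dropping it on the left (since $F\le E$) and keeping only the interior damping on the right already closes the estimate.
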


\begin{rem} The problem $\ga=0$ is more challenging and requires control theoretic tools. In the case $\gamma=0$, there is usually no decay for the complex Ginzburg-Landau equation even at $L^2$-level.  However, in our model the dynamic boundary input plays the role of a stabilizing control/feedback and one actually gains an exponential decay of solutions.  In fact, we have the following theorem.\end{rem}

 	   \begin{theo}[Stabilization II]\label{decaythm2}
Let $$u_0\in Q\equiv\{\varphi\in V\cap L^{p+1}(\Omega) \text{ such that }\gamma_0\varphi\in L^{p+1}(\Gamma_1)\}$$ and
$u$ be the corresponding global weak solution of the idbvp for the CGLE (eq. \ref{GL_equation_1}) with $\gamma=0$ as in Theorem \ref{main_density_theo}.  Moreover, suppose that $\Omega$ satisfies the following geometric condition:  $\exists x_0\in\mathbb{R}^N$ such that $(x-x_0)\cdot \nu\le 0$ on $\Gamma_0$ and $(x-x_0)\cdot \nu> 0$ on $\Gamma_1$.
Then, there exists some $C>0$ such that $$F(t)\le F(0)e^{1-\frac{t}{C}} \text{ for } t\ge 0$$  where $F(t)$ is given in \eqref{defFt}.
 	    \end{theo}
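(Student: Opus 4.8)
\noindent\emph{Overall plan.} I would run the classical multiplier (Lions--Komornik) method, adapted to the parabolic--dispersive structure of the CGLE and to the Wentzell boundary condition, and then invoke Komornik's integral inequality; since $u$ is only a weak solution I would first carry out every identity below on the strong approximants $u_\mu$ of Definition~\ref{Defweak} and pass to the limit, so that one may argue as if $u$ were smooth. The first step is to exhibit a dissipative functional dominating $F$. A direct energy computation — the $\ga=0$ case of the identity behind Theorem~\ref{decaythm}, obtained by testing \eqref{GL_equation_1} against $\overline{u_t}$ and against $\overline{\Delta u}$, taking imaginary parts and combining — gives
\[
\frac{d}{dt}E(t)=-\al\|u_t\|_{L^2(\Ga_1)}^2-\la\al\|\Delta u\|_{L^2(\Om)}^2-\kappa\beta\|u\|_{L^{2p}(\Om)}^{2p}-(\la\beta+\kappa\al)P(t),
\]
where $E(t):=F(t)+\tfrac{\la\beta+\kappa\al}{p+1}\|\ga_0u(t)\|_{L^{p+1}(\Ga_1)}^{p+1}$ and $P(t):=\int_\Om|u|^{p-1}|\nabla u|^2+(p-1)\int_\Om|u|^{p-3}|\Re(\bar u\nabla u)|^2\ge0$. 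As $\la,\kappa,\al,\beta>0$, the functional $E$ is nonincreasing and $F\le E$. Writing $\D:=-E'\ge0$, the boundary conditions $u|_{\Ga_0}=0$, $\Dn u|_{\Ga_1}=-u_t$ yield (integration by parts, Poincaré and trace on $V$) the two key estimates $\|\nabla u\|_{L^2(\Om)}^2\lesssim\|\Delta u\|_{L^2(\Om)}^2+\|u_t\|_{L^2(\Ga_1)}^2\lesssim\D$ and, via the equation, $\|u_t\|_{L^2(\Om)}^2\lesssim\|\Delta u\|_{L^2(\Om)}^2+\|u\|_{L^{2p}(\Om)}^{2p}\lesssim\D$; in particular $u_t\in L^2(0,\infty;L^2(\Om))$, so one can fix $T_n\to\infty$ with $\|u_t(T_n)\|_{L^2(\Om)}\to0$.

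Next I would introduce the ``special multiplier'': with $x_0$ as in the hypothesis, $m(x):=x-x_0$ and $\neu u:=m\cdot\nabla u+\tfrac{N-1}{2}u$, divide \eqref{GL_equation_1} by $\la+i\al$, test against $\overline{\neu u}$, integrate over $\Om$, and take the real part. Rellich's identity converts the Laplacian term into $\tfrac12\|\nabla u\|_{L^2(\Om)}^2+\tfrac12\int_{\Ga}(m\cdot\nu)|\nabla u|^2-\Re\int_\Ga\Dn u\,\overline{\neu u}$, the nonlinear term into $a\big(\tfrac{N-1}{2}-\tfrac{N}{p+1}\big)\|u\|_{L^{p+1}(\Om)}^{p+1}+\tfrac{a}{p+1}\int_{\Ga_1}(m\cdot\nu)|u|^{p+1}$ plus a momentum term $b\,\Im\int_\Om|u|^{p-1}u\,\overline{m\cdot\nabla u}$ (here $a=\Re\tfrac{\kappa+i\beta}{\la+i\al}>0$, $b=\Im\tfrac{\kappa+i\beta}{\la+i\al}$), and the time term into $\Re\tfrac1{\la+i\al}\int_\Om u_t\,\overline{\neu u}$, which integrates in $t$ to an endpoint expression. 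The geometric hypothesis is precisely what tames the boundary: on $\Ga_0$, $u=0$ forces $\nabla u=(\Dn u)\nu$, so the $\Ga_0$--contribution collapses to $-\tfrac12(m\cdot\nu)|\Dn u|^2\ge0$ (since $m\cdot\nu\le0$) and is discarded; on $\Ga_1$, $m\cdot\nu>0$ makes $\tfrac{a}{p+1}\int_{\Ga_1}(m\cdot\nu)|u|^{p+1}$ bound below the boundary part of $E$, and, decomposing $\nabla u$ into tangential and normal parts with $\Dn u=-u_t$, the positive tangential term $\tfrac12\int_{\Ga_1}(m\cdot\nu)|\nabla_\tau u|^2$ absorbs (Cauchy--Schwarz, plus the strictly positive lower bound of $m\cdot\nu$ on the compact manifold $\Ga_1$) the cross term $\int_{\Ga_1}\Re(u_t\,\overline{m\cdot\nabla_\tau u})$, leaving on $\Ga_1$ only terms $\lesssim\|u_t\|_{L^2(\Ga_1)}^2$ and a time derivative $\tfrac{d}{dt}\|\ga_0u\|_{L^2(\Ga_1)}^2$.

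Integrating over $(S,T)$ I then obtain
\[
\int_S^T\!\Big(\tfrac12\|\nabla u\|_{L^2(\Om)}^2+a\big(\tfrac{N-1}{2}-\tfrac{N}{p+1}\big)\|u\|_{L^{p+1}(\Om)}^{p+1}+\tfrac{a}{p+1}\!\int_{\Ga_1}(m\cdot\nu)|u|^{p+1}\Big)dt\le|\mathrm{endpts}|+C\!\int_S^T\!\D\,dt+C\!\int_S^T\!(\mathrm{l.o.t.})\,dt .
\]
Here $\int_S^T\D\le E(S)$; the endpoint expression is bounded using $\|\neu u\|_{L^2(\Om)}\lesssim\|\nabla u\|_{L^2(\Om)}\lesssim E^{1/2}$ and $\|u_t\|_{L^2(\Om)}^2\lesssim\D$, evaluating the far endpoint at $T=T_n$ (where it vanishes) and moving the near endpoint to a good point of $[S,S+1]$ so that it is $\lesssim E(S)$; the momentum term is $\le|b|\int_\Om|u|^p|\nabla u|\le\varepsilon\|u\|_{L^{p+1}(\Om)}^{p+1}+C_\varepsilon P(t)$ and is absorbed, as are the remaining lower order terms. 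The hard part, I expect, is the sign of $a\big(\tfrac{N-1}{2}-\tfrac{N}{p+1}\big)$: it is $\ge0$ only when $N=3$ (all admissible $p$) or when $N=2,\ p\ge3$, so for $N=1$ and for $N=2,\ 2\le p<3$ the multiplier does not by itself reconstruct the interior $L^{p+1}$ part of $E$. In those regimes one recovers $\int_S^{+\infty}\|u\|_{L^{p+1}(\Om)}^{p+1}\,dt$ (and the analogous boundary integral) from the parabolic dissipation $\kappa\beta\|u\|_{L^{2p}(\Om)}^{2p}$ and $\la\al\|\Delta u\|_{L^2(\Om)}^2$, interpolating $\|u\|_{L^{p+1}(\Om)}$ between $\|\nabla u\|_{L^2(\Om)}$ and the higher norms $\|u\|_{L^{2p}(\Om)},\|\Delta u\|_{L^2(\Om)}$ by Gagliardo--Nirenberg and trace embeddings valid precisely on the range of $(p,N)$ of Theorem~\ref{GlobalThe} — this is exactly where the smoothing $\la,\kappa>0$ of the CGLE rescues the argument. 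After these absorptions and $T=T_n\to\infty$ one reaches $\int_S^{+\infty}E(t)\,dt\le C\,E(S)$ for all $S\ge0$, with $C$ independent of $S$.

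To finish, $E$ is nonnegative, nonincreasing and satisfies $\int_S^{+\infty}E\le C E(S)$; Komornik's integral inequality then gives $E(t)\le E(0)\,e^{1-t/C}$ for every $t\ge0$. As $F\le E$ and $E(0)$ is exactly the $\ga=0$ value of the initial energy $E_0$ of Theorem~\ref{decaythm}, this yields $F(t)\le E(t)\le E(0)\,e^{1-t/C}$, the announced exponential decay of $F$.
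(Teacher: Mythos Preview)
Your high-level strategy — multiplier identity with $m(x)=x-x_0$, geometric sign control on $\Gamma_0,\Gamma_1$, then Komornik's integral inequality — is exactly the paper's route. The substantive difference is in how you set up the multiplier step. You divide the equation by $\lambda+i\alpha$ and take the \emph{real} part against $\mathcal{N}u=m\cdot\nabla u+\tfrac{N-1}{2}u$; this produces the coefficient $a\bigl(\tfrac{N-1}{2}-\tfrac{N}{p+1}\bigr)$ on the interior $L^{p+1}$ term, which can be negative and forces you into the Gagliardo--Nirenberg rescue you describe. The paper instead works with the complex-valued identity for $\frac{d}{dt}(u,q\cdot\nabla u)_{L^2(\Omega)}$ directly (Lemma~\ref{controllemma}) and, after choosing $q=x-x_0$ and extracting the appropriate part, lands on the estimate
\[
2\alpha\int_t^T\|\nabla u\|_{L^2(\Omega)}^2\,ds+\frac{N\beta(p-1)}{p+1}\int_t^T\|u\|_{L^{p+1}(\Omega)}^{p+1}\,ds\le\cdots,
\]
whose $L^{p+1}$ coefficient $\tfrac{N\beta(p-1)}{p+1}$ is \emph{always} positive. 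So the sign obstruction you anticipate never arises, and the interpolation detour is unnecessary. Your fix does work (indeed $\|u\|_{L^{p+1}}^{p+1}\le\|u\|_{L^2}\|u\|_{L^{2p}}^{p}\le\varepsilon\|\nabla u\|_{L^2}^2+C_\varepsilon\|u\|_{L^{2p}}^{2p}$ handles it uniformly in $(p,N)$), but it adds length without gain.

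One small discrepancy at the end: you apply Komornik to $E$ and conclude $F(t)\le E(t)\le E(0)e^{1-t/C}$, which is not literally the statement $F(t)\le F(0)e^{1-t/C}$ since $E(0)=F(0)+\tfrac{\lambda\beta+\kappa\alpha}{p+1}\|\gamma_0 u_0\|_{L^{p+1}(\Gamma_1)}^{p+1}\ge F(0)$. The paper applies Komornik directly to $F$, having derived $\int_t^\infty F\le CF(t)$ from the multiplier estimate and the dissipation inequality of Lemma~\ref{H1global}.
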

\section{Linear Homogeneous Problem}
\setcounter{equation}{0}
In this section, our aim is to prove Theorems \ref{Semigroup_theo} and \ref{Linear} by using similar arguments given in \cite{Dynamic_BC}.  In order to achieve this, we convert the idbvp \eqref{LSDyn} into the Wentzell ibvp in \eqref{LSVen}. To this end, we shall prove that the operator $ A $ is maximal dissipative.\\

\noindent{\bf Dissipativity:}

 The key difficulty is that this operator is not dissipative on the most natural space $L^2(\Omega). $ Indeed, from the definition of the operator $ A $ given in \eqref{operator_A}, we have:
 \begin{equation} \begin{aligned}
 (Au,u)_{L^2(\Omega)} &= (\lambda+i\,\alpha)(\Delta u, u)_{L^2(\Omega)} \\&= -(\lambda+i\,\alpha)(\nabla u, \nabla u)_{L^2(\Omega)} + (\lambda+i\,\alpha)\left(\displaystyle
 {\frac{\partial u}{\partial \nu} }, u\right)_{L^2(\Gamma_1)}
 \end{aligned}\end{equation} for $u \in D(A).$
 Since $ \displaystyle\frac{\partial u}{\partial \nu}= -(\lambda+i\,\alpha)\,\Delta\,u $ on $ \Gamma_1$ (see (\ref{LSVen})), we	get
 \begin{equation} \text{Re} (Au,u)_{L^2(\Omega)} = -\lambda\|\nabla u\|_{L^2(\Omega)}^2-\text{Re}[(\lambda+i\,\alpha)^2\,(\Delta u, u)_{L^2(\Gamma_1)}].\end{equation} Unfortunately, it is not clear from the above inequality that $\text{Re} (Au,u)_{L^2(\Omega)}\le 0$.	Therefore, we cannot say that $A$ is dissipative in $L^2(\Omega).$
 This brings forward the idea used in the past \cite{Goldstein} to treat the problem within the $H^1$ topology instead.

 So,  let  us do the same computation with respect to the scalar product of the space $V$ now.  	Then, for any $u \in D(A)$,	we have
 \begin{equation} \begin{aligned}
(Au, u)_V&= (\nabla Au, \nabla u)_{L^2(\Omega)}\\ &= (\lambda+i\,\alpha)(\nabla\Delta u, \nabla u)_{L^2(\Omega)}\\
 &= -(\lambda+i\,\alpha)(\Delta u, \Delta u)_{L^2(\Omega)} + (\lambda+i\,\alpha)
 \left(\Delta u, \displaystyle\frac{\partial u}{\partial \nu}\right)_{L^2(\Gamma_1)},
 \end{aligned}\end{equation}
 whereby substituting $\displaystyle\frac{\partial u}{\partial \nu} = -(\lambda+i\,\alpha)\Delta u$
 on the boundary	$ \Gamma_1 $, we obtain
 \begin{equation}\label{identAu} ( Au,  u)_{V} = -(\lambda+i\,\alpha)\|\Delta u\|_{L^2(\Omega)}^2
 - \left\|\displaystyle\frac{\partial u}{\partial \nu}\right\|_{L^2(\Gamma_1)}^2.\end{equation}
 Hence, taking the real parts in \eqref{identAu} and   making use of the Cauchy-Schwarz inequality, we have
 \begin{align}\label{identAu1} \text{Re}( Au,  u)_{V}&=-\lambda\,\|\Delta u\|_{L^2(\Omega)}^2-
 \left\|\displaystyle\frac{\partial u}{\partial \nu}\right\|_{L^2(\Gamma_1)}^2\le 0.\end{align}

  This	proves	the	dissipativity	of	the	operator	$ A. $\\


 \noindent{\bf Maximallity:}

We	will define a bilinear form $a(\cdot,\cdot)$ in such a way that the identity
 \begin{equation}\label{avz def}a(u,z) = (-Au +\theta\, u, z)_V \end{equation} will hold true whenever $u\in D(A)$ and $z\in V$, where $\theta$ will be chosen later.	An appropriate definition for such a bilinear form is
 \begin{equation}\label{bilinearform}a(u,z)\equiv (\lambda+i\alpha)(\Delta u,\Delta z)_{L^2(\Omega)}+\left(\frac{\partial u}{\partial \nu},\frac{\partial z}{\partial \nu}\right)_{L^2(\Gamma_1)}+\theta(\nabla u,\nabla z)_{L^2(\Omega)},\end{equation} taking into consideration the definition of $A$ and integrating by parts.

 Now, we	introduce	the	space	\begin{equation}\label{space Z} Z = \left\{z\in V, \Delta z\in L^2(\Omega), \displaystyle
 \frac{\partial z}{\partial \nu}\in L^2(\Gamma_1)\right\}, \end{equation}
 which is equipped  with the norm
 \begin{equation}\label{norm Z} \|z\|_Z^2 = \|z\|^2_V + \|\Delta z\|^2_{L^2(\Omega)} + \left\|\displaystyle\frac{\partial z}{\partial \nu}\right\|^2_{L^2(\Gamma_1)} .\end{equation}

$ Z $	is	indeed a	Banach	space (see \cite[Lemma 2.1]{Dynamic_BC} for a proof). We will consider the bilinear form \eqref{bilinearform} on this Banach space.

Our next step is to invoke the Browder-Minty theorem \cite[Theorem 5.16]{Brezisbook} to show that for any fixed $f\in V$,
 there exists a unique (weak) solution $u\in V$ satisfying the following variational form
 \begin{equation*} a(u,z) = (-f,z)_V, \end{equation*}
 for all $z\in V$.

 This is done by showing that $a(u,z)$ is continuous and coercive on $Z$. Observe that
 \begin{eqnarray} \label{bridge avz} a(u,z) &=& -(\lambda+i\,\alpha)(\Delta u, z)_V +\theta\,(u,z)_V \\
 &=& (\lambda+i\,\alpha)(\Delta u, \Delta z)_{L^2(\Omega)} -(\lambda+i\,\alpha)\left(\Delta u, \ds\frac{\partial z}{\partial \nu}\right)_{L^2(\Gamma_1)} +\theta (u,z)_V.\nonumber\end{eqnarray}

 Now, since $\ds\frac{\partial u}{\partial \nu}=-(\lambda+i\,\alpha)\,\Delta\,u$ on $\Gamma_1,$
 \eqref{bridge avz} can be rewritten as
 { \begin{equation}\begin{aligned}\label{bridge avz1}   a(u,z)
 	= (\lambda+i\,\alpha)(\Delta u, \Delta z)_{L^2(\Omega)} +\left(\ds\frac{\partial u}{\partial \nu}, \ds\frac{\partial z}{\partial \nu}\right)_{L^2(\Gamma_1)}+\theta (u,z)_V ,\end{aligned}\end{equation}}

 so we obtain
 \begin{align} |a(u,z)| &\leq C(\lambda, \alpha)\left|(\Delta u, \Delta z)_{L^2(\Omega)}\right|
 +  \left| \left(\ds\frac{\partial u}{\partial \nu}, \ds\frac{\partial z}{\partial \nu}\right)_{L^2(\Gamma_1)}\right|+ |\theta| \left|(u,z)_V\right|. \end{align}

 Applying the Cauchy-Schwarz inequality to each of the respective inner products above, one gets the estimate
 \begin{equation}  |a(u,z)| \leq C(\alpha, \lambda, \theta
 )\|u\|_Z \|z\|_Z,\end{equation}
 which proves the continuity of $a(u,z)$. \\

 In order to prove the coercivity of $a(u,z)$, we write $z=u$ in \eqref{bridge avz1}. It follows that
 {\small\begin{equation}\label{coercive} \begin{aligned}|a(u,u)| &= \left|\theta\|u\|_V^2 +(\lambda+i\,\alpha)\|\Delta u\|_{L^2(\Omega)}^2
 	+\left\|\ds\frac{\partial u}{\partial \nu}\right\|_{L^2(\Gamma_1)}^2\right| .\end{aligned}\end{equation}}
 On the other hand, we have
 {\small\begin{equation*}
 	\begin{aligned}
 	\Re                 a(u,u)=&\Re(\theta)\,\|u\|_V^2+\lambda\,\|\Delta\,u\|_{L^2(\Om)}^2+\left\|\ds\frac{\partial u}{\partial \nu}\right\|_{L^2(\Ga_1)}^2,\\
 	\Im a(u,u)=&\Im(\theta)\,\|u\|_V^2+\alpha\,\|\Delta\,u\|_{L^2(\Om)}^2.
 	\end{aligned}
 	\end{equation*}}
Now, making use of the algebraic inequality below
 {$$\sqrt{2}\,|z|\geq\,|\Re\,z|+|\Im\,z|,\,\forall\,z\in\,\mathbb{C},
 	$$} it follows that
 {\small\begin{equation}\label{auu}\begin{aligned}
 	|a(u,u)|&\geq\frac{\sqrt{2}}{2}\left(\left|\Re(\theta)\,\|u\|_V^2+\lambda\,\|\Delta\,u\|_{L^2(\Om)}^2+\left\|\ds\frac{\partial u}{\partial \nu}\right\|_{L^2(\Ga_1)}^2
 	\right|\right.\\
 	&+\left.\left|\Im(\theta)\,\|u\|_V^2+\alpha\,\|\Delta\,u\|_{L^2(\Om)}^2\right|\right) \,.\end{aligned}
 	\end{equation}}
 Now, recalling the triangle inequality
 $ |a+b|\leq |a|+|b| $ with  {\small\begin{eqnarray*}a&=& \Re(\theta)\,\|u\|_V^2+\lambda\,\|\Delta\,u\|_{L^2(\Om)}^2+\left\|\ds\frac{\partial u}{\partial \nu}\right\|_{L^2(\Ga_1)}^2,
 		\\
 		b&=& \Im(\theta)\,\|u\|_V^2+\alpha\,\|\Delta\,u\|_{L^2(\Om)}^2\end{eqnarray*} and considering $\Im(\theta)\geq 0,$ from \eqref{auu}, we infer
 	{\small\begin{equation}
 		\begin{aligned}
 		\label{coercive1}&|a(u,u)|\geq\,\frac{\sqrt{2}}{2}\left|(\Re(\theta)+\underbrace{\Im(\theta)}_{\geq 0})\|u\|_V^2+\left\|\ds\frac{\partial u}{\partial \nu}\right\|_{L^2(\Ga_1)}^2+(\lambda+
 		\alpha)\|\Delta\,u\|_{L^2(\Om)}^2\right|\\
 		&\geq
 		\frac{\sqrt{2}}{2}\left(\Re(\theta)\|u\|_V^2+\left\|\ds\frac{\partial u}{\partial \nu}\right\|_{L^2(\Ga_1)}^2+(\lambda+
 		\alpha)\|\Delta\,u\|_{L^2(\Om)}^2\right).
 		\end{aligned}
 		\end{equation}}
 	
 	By taking, $\Re(\theta)>0$ we prove the coercivity of $A $.

 	We conclude from the complex version of the Browder-Minty theorem that for all $f\in Z'$, where $Z'$
 	denotes the dual space of $Z$, there exists a solution $v\in Z$ to $a(v,z) = (-f,z)_V$.
 	Moreover, we observe that $D(A)\subset Z\subset V\subset Z'$; hence for all $f\in V$ there is a solution
 	$v\in Z\subset V$. Furthermore, by testing the variational form with $z\in Z$ satisfying $\displaystyle \frac{\partial z}{\partial \nu}=0$, it follows that
 	$$(\lambda+i\alpha)(\Delta v,\Delta z)_{L^2(\Omega)}-\theta(v,\Delta z)_{L^2(\Omega)}=(f,\Delta z)_{L^2(\Omega)},$$ whence
 	\begin{equation*} (\lambda+i\,\alpha)\Delta v - \theta v = f.\end{equation*}  Since $v, f\in V$, now it follows that $\Delta v\in V$. Using the variational form once more, one recovers the boundary condition so that $v\in D(A)$. This	tells us	that the operator  $A$ is the infinitesimal generator of a $C_0$ semigroup of contractions on $V.$

Moreover, $\Delta v\in V$ implies that $\Delta v|_{\Gamma_1} \in H^{1/2}(\Gamma_1)$
 	and thus $\ds\frac{\partial v}{\partial \nu}\in H^{1/2}(\Gamma_1)$ as well.
 	Trace theory tells us that $v\in H^2(\Omega)$; thus we know that the regularity of $D(A)$ is at least at the level of $H^2(\Omega)$. Since $ D(A) $ is dense in $V$,
 	from the Lummer -- Philips	theorem or \cite[Corollary IV.3.2,]{Show}, Theorem \ref{Semigroup_theo} follows. Translating the Wentzell ibvp \eqref{LSVen} back into the idbvp \eqref{LSDyn}, the proof Theorem \ref{Linear} is completed.

 	\section{Linear Nonhomogeneous Problem}
 	\setcounter{equation}{0}
 	In this section, our aim is to prove Theorem \ref{theo T} and Corollary \ref{corollary_inho}.  We now consider the problem \eqref{LSDynf}. First of all, according to \cite[Theorem IV.4.1A]{Show}, we can deduce the following result: \begin{prop}\label{LSDynf_prop}
 		If $ f\in\,L^1(0,\infty; V) $ and $ u_0\in\,\overline{D(A)}=V $, there exists a unique generalized solution $ u\in\,C([0,\infty); V) $ to the problem \eqref{LSDynf}.
 	\end{prop}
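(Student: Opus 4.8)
The proof is an application of standard inhomogeneous linear semigroup theory, so the plan is to recast \eqref{LSDynf} as an abstract Cauchy problem for the operator $A$ of \eqref{operator_A}--\eqref{D(A)} and then invoke \cite[Theorem IV.4.1A]{Show}. First I would recall that Theorem \ref{Semigroup_theo}, proved in Section 3, says precisely that $(A,D(A))$ is densely defined and $m$-dissipative on $V$, hence generates a strongly continuous contraction semigroup $\{S(t)\}_{t\ge 0}$ on $V$; in particular $\overline{D(A)}=V$. Exactly as in the homogeneous case, the dynamic condition $\frac{\partial u}{\partial\nu}=-u_t$ on $\Gamma_1$ together with the interior equation restricted to the boundary lets one encode the boundary evolution into the domain of $A$, so that the idbvp \eqref{LSDynf} is understood as the abstract problem $\dot u = Au + f$, $u(0)=u_0$, and a \emph{generalized} solution of \eqref{LSDynf} is by definition a generalized (mild) solution of this abstract equation --- no pointwise membership $u(t)\in D(A)$ and no compatibility of $f|_{\Gamma_1}$ with the boundary condition in \eqref{D(A)} is required at this stage.

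Next I would check that the hypotheses of \cite[Theorem IV.4.1A]{Show} hold with $X=V$: $A$ generates a contraction semigroup on $X$ (Theorem \ref{Semigroup_theo}); the forcing satisfies $f\in L^1(0,T;V)$ for every $T>0$; and $u_0\in V=\overline{D(A)}$. The cited theorem then produces, on each finite interval $[0,T]$, a unique generalized solution $u\in C([0,T];V)$, characterized as the uniform limit in $C([0,T];V)$ of the strong solutions corresponding to approximating data $(u_{0,n},f_n)\to(u_0,f)$ in $V\times L^1(0,T;V)$, and representable through the Duhamel formula
\begin{equation*}
u(t)=S(t)u_0+\int_0^t S(t-s)\,f(s)\,ds .
\end{equation*}
Because these solutions are consistent on nested intervals and $T>0$ is arbitrary, they patch together to a single $u\in C([0,\infty);V)$.

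For uniqueness, if $u_1,u_2\in C([0,\infty);V)$ are generalized solutions of \eqref{LSDynf} with the same data, then $w=u_1-u_2$ solves the homogeneous problem with zero initial datum, so $w\equiv 0$ by Theorem \ref{Linear}; alternatively this follows at once from the Duhamel representation and the contraction bound $\|S(t)\|_{\mathcal{L}(V)}\le 1$.

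There is essentially no obstacle in this proposition --- its entire content is contained in \cite[Theorem IV.4.1A]{Show} combined with the semigroup generation of Theorem \ref{Semigroup_theo}. The only item deserving a word of care is the bookkeeping in the first step, namely that at the level of generalized solutions the Wentzell reformulation of \eqref{LSDynf} does not demand any trace compatibility of $f$; the finer ``hidden'' trace regularity and the statement that the generalized solution is in fact a genuine solution of the idbvp are deferred to Theorem \ref{theo T} and Corollary \ref{corollary_inho}, which require the stronger hypothesis $f\in L^2(0,\infty;V)$.
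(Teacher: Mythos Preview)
Your proposal is correct and follows exactly the paper's approach: the paper simply states that the result follows from \cite[Theorem IV.4.1A]{Show}, and your write-up supplies the routine details (recasting \eqref{LSDynf} as $\dot u=Au+f$ via the Wentzell reformulation, invoking Theorem \ref{Semigroup_theo} for semigroup generation, and applying the cited abstract result). Your additional remarks on the Duhamel representation and uniqueness are consistent with, though more explicit than, the paper's one-line citation.
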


 	Now, we want to consider the general Wentzell ibvp in \eqref{LSf}.  Let us first consisder the case $f = 0$ and then employ the superposition principle to obtain the well-posedness of \eqref{LSf}. First, we define the Neumann map $\mathcal{	N}$ below where for a given $g\in H^s(\Gamma_1)$, $\mathcal{N}g$ solves
 	\begin{equation}
 	\begin{cases}
 	\Delta \mathcal{N}g = 0 \\
 	\dis\frac{\partial \mathcal{N}g}{\partial\nu}  = g  &\text{ on } ~ \Gamma_1 \\
 	\mathcal{N}g =0  &\text{ on } ~ \Gamma_0.
 	\end{cases}\label{Neumann}
 	\end{equation}
 	Using the elliptic theory \cite{Lions-Magenes}, one can deduce
 	\begin{equation}\label{Neumann_cont}
 	\mathcal{N}: H^s(\Gamma_1)\rightarrow H^{s+3/2}(\Omega) \quad \text{ is continuous for all $s\in\mathbb{R}$.}
 	\end{equation}
 	
 	Define now $\tilde{u} = u-\mathcal{N}g$. Using the boundary condition on $ \Ga_1 $ given in \eqref{LSf}, $\Delta \mathcal{N}g = 0$ and $ \dis\frac{\partial \mathcal{N}g}{\partial\nu}=g $ on $ \Ga_1, $ it follows that
 	\begin{equation}\label{neubd}\begin{aligned}
 	\frac{\partial \tilde{u}}{\partial\nu} =&\frac{\partial{u}}{\partial\nu}-\frac{\partial \mathcal{N}\,g}{\partial\nu} \\
 	=&\cancel{g}-(\la+i\,\al)\,\Delta\,u-\cancel{g}\\
 	=&-(\la+i\,\al)\,\Delta\,u+(\la+i\,\al)\,\underbrace{\Delta\mathcal{N}g}_{=0}\\
 	=&-(\la+i\,\al)\,\Delta[\,u-{\mathcal{N}g}]\\
 	=&-(\la+i\,\al)\,\Delta \tilde{u} \quad \text{ on }\,\Ga_1.
 	\end{aligned} \end{equation}
 	
 	Moreover, since we are considering the case  $ f=0, $ we see that
 	\begin{equation*} \begin{aligned}
 	\tilde{u}_t =& u_t - \mathcal{N}g_t\\
 	=& (\la+i\,\al)\,\Delta u - \mathcal{N}g_t.
 	\end{aligned}\end{equation*}
 	Again making use of the fact that  $\Delta \mathcal{N}g = 0$, we have
 	\begin{equation}\label{neuint}\begin{aligned}
 	\tilde{u}_t =& (\lambda+i\,\alpha)\,\Delta (u - \mathcal{N}g) -\mathcal{N}g_t\\ =& (\lambda+i\,\alpha)\,\Delta\tilde{u} -\mathcal{N}g_t.
 	\end{aligned}\end{equation}
 	Combining (\ref{neubd}) and (\ref{neuint}), the problem with respect to the function  $\tilde{u}$ becomes:
 	\begin{equation}\label{tildeprob}\begin{cases} \tilde{u}_t  = (\lambda+i\,\alpha)\,\Delta\tilde{u} -\mathcal{N}g_t& \text{ in }\Omega\times (0,\infty) \\
 	\tilde{u} = 0 & \text{ on }\Gamma_0\times (0,\infty) \\
 	\displaystyle{\frac{\partial \tilde{u}}{\partial \nu}} = -(\lambda+i\,\alpha)\,\Delta \tilde{u} & \text{ on }\Gamma_1\times (0,\infty) \\
 	\tilde{u}(0) = u_0 - \mathcal{N}g(0) .\end{cases}\end{equation}

 	The above problem gives us the following result:
 	
 	\begin{lem}\label{tildecont} If $g\in W^{1,1}(0,\infty ; H^{-1/2}(\Gamma_1))$, then there exists a unique solution  ${u}\in {C([0,\infty) ;V)}$ to the problem (\ref{LSf}). \end{lem}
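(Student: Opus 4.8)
The plan is to reduce problem \eqref{LSf} (with $f=0$) to an abstract inhomogeneous Cauchy problem for the generator $A$ of Theorem \ref{Semigroup_theo}, applied to the shifted unknown $\tilde u = u-\mathcal{N}g$ of \eqref{tildeprob}. Since the boundary condition in \eqref{tildeprob} is precisely the homogeneous Wentzell condition encoded in $D(A)$, the system \eqref{tildeprob} is exactly
$$\dot{\tilde u} = A\tilde u - \mathcal{N}g_t, \qquad \tilde u(0)=u_0-\mathcal{N}g(0).$$
By \cite[Theorem IV.4.1A]{Show} --- exactly as in Proposition \ref{LSDynf_prop} --- this problem has a unique generalized solution $\tilde u\in C([0,\infty);V)$ as soon as the forcing $-\mathcal{N}g_t$ lies in $L^1(0,\infty;V)$ and the initial datum lies in $\overline{D(A)}=V$. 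Thus the whole proof reduces to verifying these two membership statements.

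For the forcing term, $g\in W^{1,1}(0,\infty;H^{-1/2}(\Gamma_1))$ gives $g_t\in L^1(0,\infty;H^{-1/2}(\Gamma_1))$; the elliptic estimate \eqref{Neumann_cont} at the index $s=-1/2$ yields the boundedness $\mathcal{N}\colon H^{-1/2}(\Gamma_1)\to H^1(\Omega)$, and, $\mathcal{N}$ being linear, it commutes with the time derivative, so $\mathcal{N}g_t=\partial_t(\mathcal{N}g)\in L^1(0,\infty;H^1(\Omega))$; finally $\mathcal{N}(\cdot)$ vanishes on $\Gamma_0$ by the definition \eqref{Neumann}, hence $\mathcal{N}g_t\in L^1(0,\infty;V)$. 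For the initial datum, the embedding $W^{1,1}(0,\infty;H^{-1/2}(\Gamma_1))\hookrightarrow C([0,\infty);H^{-1/2}(\Gamma_1))$ makes $g(0)$ well defined in $H^{-1/2}(\Gamma_1)$, so $\mathcal{N}g(0)\in V$, and therefore $\tilde u(0)=u_0-\mathcal{N}g(0)\in V$.

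With these two facts in hand, Proposition \ref{LSDynf_prop} (in its abstract form) produces a unique $\tilde u\in C([0,\infty);V)$. Then set $u:=\tilde u+\mathcal{N}g$; since the same time-regularity embedding composed with the continuity of $\mathcal{N}$ gives $\mathcal{N}g\in C([0,\infty);V)$, we obtain $u\in C([0,\infty);V)$. Reversing the elementary identities \eqref{neubd} and \eqref{neuint} shows that this $u$ solves \eqref{LSf} with $f=0$, and uniqueness transfers back: if $u_1,u_2$ were two solutions of \eqref{LSf}, then $u_1-\mathcal{N}g$ and $u_2-\mathcal{N}g$ would both solve \eqref{tildeprob} and hence coincide.

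The only genuinely delicate point --- and the one that makes the lift $\tilde u=u-\mathcal{N}g$ work --- is the boundary regularity accounting: one must use the Neumann map at the \emph{negative} Sobolev index $s=-1/2$, so that the time derivative $g_t$, which a $W^{1,1}$ datum only guarantees to belong to $H^{-1/2}(\Gamma_1)$, is lifted by $\mathcal{N}$ exactly into $H^1(\Omega)$, i.e.\ into the space $V$ in which the variation-of-parameters formula for $A$ yields a continuous-in-time solution. Everything else is a routine application of the linear theory of Section~3.
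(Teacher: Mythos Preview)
Your proof is correct and follows essentially the same approach as the paper: you reduce to the homogeneous Wentzell problem \eqref{tildeprob} for $\tilde u=u-\mathcal{N}g$, verify via the Neumann-map regularity at $s=-1/2$ that $-\mathcal{N}g_t\in L^1(0,\infty;V)$ and $\tilde u(0)\in V$, invoke Proposition~\ref{LSDynf_prop}, and then recover $u=\tilde u+\mathcal{N}g\in C([0,\infty);V)$. The only addition is that you spell out the uniqueness transfer and the embedding $W^{1,1}\hookrightarrow C$ used to make sense of $g(0)$, both of which the paper leaves implicit.
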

 	\begin{proof} Indeed, if   $g\in  W^{1,1}(0,\infty ; H^{-1/2}(\Gamma_1))$, then $g_t\in L^1(0,\infty ; H^{-1/2}(\Gamma_1))$ and  $g(0)\in H^{-1/2}(\Gamma_1)$. Using these and taking into account that the Neumann map given in  (\ref{Neumann_cont}) is continuous from $ H^{-1/2}(\Gamma_1)$ into $H^{1}(\Omega)$, it follows that
 		\begin{equation}\begin{cases} -\mathcal{N}g_t \in L^1(0,\infty ; V) \\
 		\mathcal{N}g(0) \in V.\end{cases}\end{equation}
 		Using Proposition \ref{LSDynf_prop} we obtain the well-posedness of \eqref{tildeprob} and that  $\tilde{u} \in C([0, \infty); V ) $. Writing $u=\tilde{u}+\mathcal{N}g$ and using the fact $\tilde{u},\, \mathcal{N}g\in C([0,\infty) ; V)$, we conclude that there exists a unique solution  $u\in C([0,\infty) ; V)$ to the problem (\ref{LSf}).
 	\end{proof}

Below we will prove Theorem \ref{theo T}. This  result requires only $g\in L^2(0,T; L^2(\Gamma_1))$.  Indeed, Lemma \ref{tildecont} tells us that that there exists a unique solution  $u\in {C([0,T] ; V)}$  to the problem (\ref{LSf}) whenever  {$g\in  W^{1,1}(0,T ; H^{-1/2}(\Gamma_1))$}. So it is sufficient to prove that $\ds\sup_{t\in[0,T]} \|u(t)\|_V < \infty$ for  $g\in L^2(0,T; L^2(\Gamma_1))$ and then use a density argument.
 		
 		Multiplying (\ref{LSf}) by $\bar{u}$ in $V$, taking the inner product, and integrating in temporal variable  $ t,$ it follows that
 		{\small	\begin{equation}\label{energy11} \begin{aligned}
\ds\int_0^t (u_t(s),u(s))_V \, ds - \ds\int_0^t ((\lambda+i\,\alpha)\Delta u(s),u(s))_V \, ds = \ds\int_0^t (f(s),u)_V \, ds .
 			\end{aligned}\end{equation}} Without loss of generality,  we take $f = 0$. The  general case  $f\neq 0$ can be solved a posteriori via superposition. We observe that the first term satisfies
 		\begin{equation}\label{energy12} \ds\text{Re}\int_0^t (u_t(s),u(s))_V \, ds = \frac{1}{2}\ds\int^t_0 \frac{d}{dt} \|u(s)\|^2_V \, ds = \frac{1}{2}\|u(t)\|^2_V - \frac{1}{2}\|u(0)\|^2_V .\end{equation}
 		Using integration by parts on the second term at the left hand side of  (\ref{energy11}) we get
 		\begin{equation} \begin{aligned}
 		&	\ds\int^t_0 ((\lambda+i\,\alpha)\Delta u(s), u(s))_V\, ds \\&= \ds\int^t_0 \left[-(\lambda+i\,\alpha)\|\Delta u(s)\|^2_{L^2(\Omega)} +(\lambda+i\,\alpha)\left(\Delta u(s), \frac{\partial u}{\partial\nu}(s) \right)_{L^2(\Gamma_1)}\right]\, ds.
 		\end{aligned} \end{equation}
 		Now, substituting the boundary condition on $ \Ga_1 $, namely,	 $\displaystyle (\lambda+i\,\alpha)\,\Delta\,u=g-\frac{\partial u}{\partial \nu}$ we obtain
{\small 		\begin{equation} \label{delta}\begin{aligned}
 		&\ds\int^t_0 \left((\lambda+i\,\alpha)\Delta u(s), \frac{\partial u}{\partial\nu}(s)\right)_{L^2(\Ga_1)} \, ds\\& =\ds\int^t_0 \left[-\left\|\frac{\partial u}{\partial \nu} (s)\right\|^2_{L^2(\Gamma_1)} + \left(g, \frac{\partial u}{\partial\nu}(s)\right)_{L^2(\Gamma_1)}\right]\, ds .
 		\end{aligned}\end{equation}}
Substituting \eqref{energy12} -- \eqref{delta} in \eqref{energy11} and taking the real parts, we get:
 		{\small	\begin{eqnarray}
 			\label{4.17rewritten} \begin{aligned}
 			0&=			\frac{1}{2}\|u(t)\|^2_V - \frac{1}{2}\|u(0)\|^2_V + \ds\lambda\int^t_0 \|\Delta u(s)\|^2_{L^2(\Omega)}\,ds+\ds \int^t_0\left\|\frac{\partial u}{\partial \nu} (s)\right\|^2_{L^2(\Gamma_1)}\,ds\\ &-\Re \int_0^t\left(g, \frac{\partial u}{\partial\nu}(s)\right)_{L^2(\Gamma_1)}\, ds.
 			\end{aligned}\end{eqnarray}}
 		
We infer that

 		\begin{equation}\label{4.17ineq0} \begin{aligned}
 		&\frac{1}{2}\|u(t)\|^2_V + \ds\lambda\int^t_0 \|\Delta u(s)\|^2_{L^2(\Omega)}\,ds+ \left(1-\frac{1}{4\,\eta}\right)\ds\int_0^t \left\|\frac{\partial u}{\partial \nu} (s)\right\|^2_{L^2(\Gamma_1)}\, ds\\
 		&\leq  \frac{1}{2}\|u(0)\|^2_V + \eta\, \ds\int_0^t \| g\|^2_{L^2(\Gamma_1)}\, ds
 		\end{aligned}\end{equation} where we choose $\eta>\frac{1}{4}$.

 		More generally, for $f\in L^1(0,T; V)$, we observe
 		\begin{equation}
 		\label{Gronwall2}\begin{aligned}
 		&\frac{1}{4}\|u(t)\|^2_V + \ds\lambda\int^t_0 \|\Delta u(s)\|^2_{L^2(\Omega)}\,ds+ \left(1-\frac{1}{4\,\eta}\right)\ds\int_0^t \left\|\frac{\partial u}{\partial \nu} (s)\right\|^2_{L^2(\Gamma_1)}\, ds\\ &\leq \frac{1}{2}\|u(0)\|^2_V + \ds\eta\int_0^t \| g\|^2_{L^2(\Gamma_1)}\, ds+\ds\int_0^t \|f\|_V^2\, ds\\
 		&\leq C(u_0, f, g).
 		\end{aligned}
 		\end{equation}

 	 	By making the identification $g= f|_{\Gamma_1}$, we can identify (\ref{LSf}) with the idbvp \eqref{LSDynf}. 	Note that if $f\in L^2(0,T ; V)$, by trace theory $g\in L^2(0,T ; H^{1/2}(\Gamma_1))$. Moreover, since $ \Delta\,u\in\,L^2(0,T; L^2(\Om)) $ and $ f\in L^2(0,T ; V), $ we have that $ u_t\in L^2(0,T ; L^2(\Om) ) $, and this concludes the proof of Corollary \ref{corollary_inho}.
 	
 	\begin{rem}

 \begin{enumerate}[(i)]
               \item Given $u\in H^1(\Omega)$, by  trace theory applied only {\it formally}, one obtains that $\displaystyle\frac{\partial u}{\partial \nu}\in H^{-1/2}(\Gamma)$. Therefore, the additional regularity $\dis \frac{\partial u}{\partial \nu}\in L^2(\Gamma_1)$, which shows up in Theorem \ref{theo T}, is a ``hidden''
 		regularity due to the underlying Wentzell structure.
               \item Again, just by a formal argument, one would expect $ \Delta\,u\in\,H^{-1}(\Omega)$.  The fact that $ \Delta\,u\in\,L^2(\Om), $ is a result of the smoothing effect due to the intrinsic properties of the parabolic component of the Ginzburg-Landau operator.
               \item From Corollary \ref{corollary_inho}, we also conclude that the map
 	\begin{equation} (f,u_0) \longmapsto u \end{equation}
 	is bounded from  $L^2(0,\infty, V)\times V$ into $C([0,\infty); V)$.
             \end{enumerate}
  \end{rem}

 	Next, we desire to obtain a Duhamel's formula  for a problem derived from \eqref{LSf} by following the ideas in \cite{Dynamic_BC}.
 	Our goal is to obtain an estimate for $ u_t .$ To this end,  we will first deduce a semigroup representation for $ u $.\\
 	
 	\begin{lem}\label{l:sem}
 		Let $ u_0, f $ and $ g $ satisfy
 		\begin{enumerate}\begin{multicols}{2}
 				
 			\item[(i)]$ u_0\in\,V $
 			\item[(ii)] $ f\in\,L^1(0,T; V) $
 			\item[(iii)] $ g\in\,L^2(0,T; L^2(\Gamma_1)) $.
 			\end{multicols}
 		\end{enumerate}
 		Then, the solution $ u\in\,C([0,T];V) $ of \eqref{LSf} can be represented by
 		\begin{equation}\label{Semigroup}
 		u(t) = e^{At} u_0 - A \int_0^t e^{A(t-s) }{\mathcal{N}} g (s) ds + \int_0^t e^{A(t-s) } {f}(s) ds.
 		\end{equation}
 	\end{lem}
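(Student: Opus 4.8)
The plan is to reduce \eqref{LSf} to the Wentzell problem with homogeneous boundary flux generated by $A$ via the substitution already used in Section~4, to apply the classical variation-of-parameters formula for the contraction semigroup $e^{At}$ furnished by Theorem \ref{Semigroup_theo}, and then to move the time derivative off of $g$ by an integration by parts in which $\tfrac{d}{ds}e^{A(t-s)}=-Ae^{A(t-s)}$ generates the term $-A\int_0^t e^{A(t-s)}\mathcal{N}g(s)\,ds$.

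First I would treat the regularized case $g\in W^{1,1}(0,T;H^{-1/2}(\Gamma_1))$, for which Lemma \ref{tildecont} already gives a solution $u\in C([0,T];V)$ and, by the continuity \eqref{Neumann_cont} of the Neumann map, $\mathcal{N}g\in W^{1,1}(0,T;V)$. Setting $\tilde u=u-\mathcal{N}g$ and repeating the computation \eqref{neubd}--\eqref{neuint} now keeping the interior source, $\tilde u$ solves $\tilde u_t=A\tilde u+f-\mathcal{N}g_t$ with homogeneous boundary flux and $\tilde u(0)=u_0-\mathcal{N}g(0)$, where $f-\mathcal{N}g_t\in L^1(0,T;V)$. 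Hence, by the semigroup representation of the generalized solution (the same \cite[Theorem IV.4.1A]{Show} invoked in Proposition \ref{LSDynf_prop}),
\[
\tilde u(t)=e^{At}\bigl(u_0-\mathcal{N}g(0)\bigr)+\int_0^t e^{A(t-s)}f(s)\,ds-\int_0^t e^{A(t-s)}\mathcal{N}g_t(s)\,ds .
\]
Since $\mathcal{N}g\in W^{1,1}(0,T;V)$, the map $z(t):=\int_0^t e^{A(t-s)}\mathcal{N}g(s)\,ds$ is a classical solution of $z'=Az+\mathcal{N}g$, so $z(t)\in D(A)$; differentiating $z(t)=\int_0^t e^{A\sigma}\mathcal{N}g(t-\sigma)\,d\sigma$ yields $z'(t)=e^{At}\mathcal{N}g(0)+\int_0^t e^{A(t-s)}\mathcal{N}g_t(s)\,ds$, and therefore
\[
\int_0^t e^{A(t-s)}\mathcal{N}g_t(s)\,ds=A\int_0^t e^{A(t-s)}\mathcal{N}g(s)\,ds+\mathcal{N}g(t)-e^{At}\mathcal{N}g(0).
\]
Substituting this together with $u=\tilde u+\mathcal{N}g$ into the previous display, the terms $e^{At}\mathcal{N}g(0)$ and $\mathcal{N}g(t)$ cancel and \eqref{Semigroup} follows.

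For general $g\in L^2(0,T;L^2(\Gamma_1))$ I would argue by density: pick $g_n\in W^{1,1}(0,T;H^{-1/2}(\Gamma_1))$ with $g_n\to g$ in $L^2(0,T;L^2(\Gamma_1))$ and let $u_n$ be the corresponding solutions, each satisfying \eqref{Semigroup}. The difference $u_n-u_m$ solves \eqref{LSf} with data $(0,0,g_n-g_m)$, so the energy estimate \eqref{Gronwall2} shows that $(u_n)$ is Cauchy in $C([0,T];V)$ with limit the solution $u$ of Theorem \ref{theo T}. Because $e^{At}u_0$ and $\int_0^t e^{A(t-s)}f(s)\,ds$ are independent of $n$, the term $-A\int_0^t e^{A(t-s)}\mathcal{N}g_n(s)\,ds=u_n(t)-e^{At}u_0-\int_0^t e^{A(t-s)}f(s)\,ds$ converges in $C([0,T];V)$; one takes this limit as the meaning of $-A\int_0^t e^{A(t-s)}\mathcal{N}g(s)\,ds$, so that \eqref{Semigroup} holds for $u$.

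The main obstacle is exactly this last point. For merely $L^2$ data $g$ one has $\mathcal{N}g(s)\in V$ but in general $\mathcal{N}g(s)\notin D(A)$, so the Duhamel convolution of $\mathcal{N}g$ need not lie in $D(A)$ and the unbounded operator $A$ cannot be applied to it pointwise; consequently the integration by parts above is legitimate only in the regularized setting, and the content of the lemma is that the resulting identity survives the limit. The mechanism that makes this work is the continuous-dependence estimate \eqref{Gronwall2}, which says precisely that $g\mapsto A\int_0^{\cdot} e^{A(\cdot-s)}\mathcal{N}g(s)\,ds$ extends continuously from the dense class $W^{1,1}(0,T;H^{-1/2}(\Gamma_1))$ to all of $L^2(0,T;L^2(\Gamma_1))$ with values in $C([0,T];V)$.
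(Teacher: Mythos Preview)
Your proof is correct and follows essentially the same route as the paper: reduce to the homogeneous Wentzell problem via $\tilde u=u-\mathcal{N}g$, apply Duhamel for the semigroup $e^{At}$, and integrate by parts to transfer the time derivative from $g$ onto the semigroup, producing the factor $A$. The only difference is one of presentation: the paper carries out the integration by parts formally ``with values in the dual space $[D(A)]'$'' and then argues a posteriori (via \cite[Theorem 2.4(b)]{Pazy}) that $\int_0^t e^{A(t-s)}\mathcal{N}g(s)\,ds\in D(A)$, whereas you perform the computation rigorously for $g\in W^{1,1}(0,T;H^{-1/2}(\Gamma_1))$ and then pass to the limit in $L^2(0,T;L^2(\Gamma_1))$ using the energy estimate \eqref{Gronwall2}; your density step is exactly what the paper packages afterwards as the continuity of the map $\mathcal{L}$ in \eqref{Lmap}.
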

 	\begin{proof}
 		From Duhamel's formula, the solution of \eqref{tildeprob} is given by
 		$$\tilde{u}(t) = e^{At} \tilde{u}(0)  - \int_0^t e^{A(t-s) }{\mathcal{N}} g _t(s) ds + \int_0^t e^{A(t-s) }{f}(s) ds.
 		$$The above formula is understood with the values in the dual space $[D(A)]' $ .
 		Integrating by parts yields \begin{equation}\begin{aligned} \tilde{u}(t) &=   e^{At} \tilde{u}(0)- \left.e^{A(t-s)}{\mathcal{N}}g(s)\right|_{0}^t-A \int_0^t e^{A(t-s) }\widehat{\mathcal{N}} g (s) ds + \int_0^t e^{A(t-s) } {f}(s) ds\\&=e^{At} \tilde{u}(0)  - {\mathcal{N}} g(t) + e^{At} {\mathcal{N}} g(0)  -A \int_0^t e^{A(t-s) }{\mathcal{N}} g (s) ds + \int_0^t e^{A(t-s) } {f}(s) ds.\end{aligned}\end{equation}
 		Since  $ \tilde{u}= u - {\mathcal{N}}g $, we obtain the desired representation given in  (\ref{Semigroup}).\\

 		Then, from assumptions (i) and (ii), the first and the third term  of \eqref{Semigroup} belong to $ C([0,T]; V). $ To prove that $u\in\,C([0,T]; V)  $, it remains to be shown that $ A \int_0^t e^{A(t-s) }{\mathcal{N}} g (s) ds\in\, C([0,T];V). $ In fact, combining assumption (iii) with \eqref{Neumann_cont} for $ s=0 $, it results that $ \mathcal{N}g\in\,L^2(0,T,V). $  Hence, due to \cite[Theorem 2.4(b), pg.5]{Pazy}, we obtain that the term  $ \int_0^t e^{A(t-s) }{\mathcal{N}}g(s)\,ds\in\,D(A), $ which allows us to say that  $ A\,\int_0^t e^{A(t-s) }{\mathcal{N}}g(s)\,ds $ makes sense.
 	\end{proof}
 	We deduce from the proof of Theorem \ref{theo T} that the following map is continuous:
 	\begin{equation}\label{Lmap}
 	\begin{aligned}
 	\mathcal{L}:L^2(0,T; L^2(\Gamma_1) )&\rightarrow C ([0,T]; V )\\
 	g\hspace{1.4cm}&\mapsto A \int_0^t e^{A(t-s) }{\mathcal{N}} g (s) ds.
 	\end{aligned}
 	\end{equation}

 	Below, we will be looking at more regular solutions corresponding  to the inhomogeneous problem (\ref{LSf}). We start our discussion with the following result:
 	\begin{theo}
 		In addition to the assumptions in Lemma \ref{l:sem} assume also that:
 		\begin{enumerate}[(i)]\begin{multicols}{2}
 				
 			\item
 			$f_t \in L^1(0,T; V) $
 			\item
 			$g_t \in L^2(0,T; L^2(\Gamma_1) ) $
 			\item
 			$ \Delta  u_0 \in V $ \text{ and } $$\dis  \frac{\partial\,u_0}{\partial\,\nu}  -  g(0) = -(\la+i\,\al) \Delta u_0.  $$
 			\end{multicols}
 		\end{enumerate} Then, the following estimate is satisfied:
 		\begin{equation}
 		\label{teobridge1}\|u_t\|_{C([0,T]; V)} \leq C \Big[ \|f\|_{W^{1,1}(0,T; V) } + \|g\|_{H^1(0,T; L^2(\Gamma_1))} + \| \Delta u_0\|_V   + \|u_0\|_V  \Big].
 		\end{equation}
 		If, in addition, $g\in C([0, T]; H^{1/2}(\Gamma_1)), $ then $ u \in C( [0,T]; H^2(\Omega)). $
 	\end{theo}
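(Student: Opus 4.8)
The plan is to differentiate the problem in time. Setting $v := u_t$, one expects $v$ to satisfy a Wentzell ibvp of exactly the form \eqref{LSf}, now with interior forcing $f_t$, boundary input $g_t$, and initial datum $v(0) = (\lambda+i\alpha)\Delta u_0 + f(0)$ (read off the interior equation at $t=0$), the boundary condition being $\frac{\partial v}{\partial\nu} = -(\lambda+i\alpha)\Delta v + g_t$ on $\Gamma_1$ and $v = 0$ on $\Gamma_0$. Hypotheses (i)--(iii) are tailored precisely for this: (i)--(ii) place $f_t$ and $g_t$ in the admissible classes of Lemma \ref{l:sem}, while (iii) together with $\Delta u_0 \in V$ guarantees that $u_0 \in D(A)$ and that $v(0) \in V$ (note $f(0) \in V$ because $W^{1,1}(0,T;V) \hookrightarrow C([0,T];V)$).

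First I would apply Lemma \ref{l:sem} (equivalently Theorem \ref{theo T}) to the $v$-problem, whose data $(v(0), f_t, g_t)$ satisfy the hypotheses of that lemma, obtaining a unique $v \in C([0,T];V)$ with the representation
\[ v(t) = e^{At} v(0) - A\int_0^t e^{A(t-s)}\mathcal{N}g_t(s)\,ds + \int_0^t e^{A(t-s)}f_t(s)\,ds, \]
i.e. the formula \eqref{Semigroup} with $(u_0,f,g)$ replaced by $(v(0),f_t,g_t)$ (formally, the $t$-derivative of \eqref{Semigroup}). To identify this $v$ with $u_t$, set $U(t) := u_0 + \int_0^t v(s)\,ds$ and check that $U$ solves \eqref{LSf} with the original data $(u_0,f,g)$: integrating the interior equation for $v$ over $[0,t]$ and using $v(0) = (\lambda+i\alpha)\Delta u_0 + f(0)$ gives $U_t = (\lambda+i\alpha)\Delta U + f$; integrating the boundary relation for $v$ over $[0,t]$ and invoking the compatibility identity (iii), rewritten as $\frac{\partial u_0}{\partial\nu} + (\lambda+i\alpha)\Delta u_0 = g(0)$, gives $\frac{\partial U}{\partial\nu} = -(\lambda+i\alpha)\Delta U + g$ on $\Gamma_1$; and $U|_{\Gamma_0} = 0$, $U(0) = u_0$ are immediate. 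By uniqueness in Theorem \ref{theo T}, $U = u$, hence $u_t = v \in C([0,T];V)$. The estimate \eqref{teobridge1} then follows from the displayed representation: $\|e^{At}\|_{V\to V} \le 1$ (Theorem \ref{Semigroup_theo}) gives $\|e^{At}v(0)\|_V \le \|v(0)\|_V \le |\lambda+i\alpha|\,\|\Delta u_0\|_V + C\|f\|_{W^{1,1}(0,T;V)}$; continuity of the map $\mathcal{L}$ in \eqref{Lmap} bounds the middle term by $C\|g_t\|_{L^2(0,T;L^2(\Gamma_1))} \le C\|g\|_{H^1(0,T;L^2(\Gamma_1))}$; and the last term is at most $\|f_t\|_{L^1(0,T;V)}$.

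For the final assertion, assume in addition $g \in C([0,T];H^{1/2}(\Gamma_1))$. From $u_t \in C([0,T];V)$ and $f \in C([0,T];V)$ the interior equation gives $(\lambda+i\alpha)\Delta u = u_t - f \in C([0,T];V) \hookrightarrow C([0,T];H^1(\Omega))$; taking the interior trace, $\Delta u|_{\Gamma_1} \in C([0,T];H^{1/2}(\Gamma_1))$, so $\frac{\partial u}{\partial\nu} = -(\lambda+i\alpha)\Delta u + g \in C([0,T];H^{1/2}(\Gamma_1))$. Hence, for each $t$, $u(t)$ solves a mixed elliptic problem with $\Delta u(t) \in L^2(\Omega)$, zero Dirichlet datum on $\Gamma_0$, and Neumann datum in $H^{1/2}(\Gamma_1)$; the elliptic estimate $\|u(t)\|_{H^2(\Omega)} \le C\big(\|\Delta u(t)\|_{L^2(\Omega)} + \|\tfrac{\partial u}{\partial\nu}(t)\|_{H^{1/2}(\Gamma_1)}\big)$ — the same one used to see $D(A) \subset H^2(\Omega)$ in the proof of Theorem \ref{Semigroup_theo} — gives $u(t) \in H^2(\Omega)$, and applying it to the differences $u(t) - u(s)$, whose data depend continuously on $t$, gives $u \in C([0,T];H^2(\Omega))$.

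The delicate step is the identification $u_t = v$: one must be certain that the differentiated problem is the correct one and, above all, that its boundary condition is consistent with the original data, which is exactly what hypothesis (iii) supplies — without it the integrated function $U$ would fail the Wentzell boundary condition and the uniqueness argument would break down. Everything else (the contraction estimate, the continuity of $\mathcal{L}$, and the concluding elliptic regularity step) is routine once the semigroup representation of $v$ coming from Lemma \ref{l:sem} is available.
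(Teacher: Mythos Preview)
Your proof is correct and arrives at the same representation for $u_t$ and the same estimate as the paper, but by a slightly different route. The paper differentiates the Duhamel formula \eqref{Semigroup} directly: using the identity $A\int_0^t e^{A\tau}\varphi(t-\tau)\,d\tau = e^{At}\varphi(0) - \varphi(t)$ together with the Leibniz rule (the computation being carried out in the dual space $[D(A)]'$), it obtains
\[
u_t(t) = Ae^{At}\bigl[u_0 - \mathcal{N}g(0)\bigr] + e^{At}f(0) + \int_0^t e^{A(t-s)}f_t(s)\,ds - \mathcal{L}g_t(t),
\]
and then notes that hypothesis (iii) is precisely the statement $u_0 - \mathcal{N}g(0) \in D(A)$, so that $Ae^{At}[u_0 - \mathcal{N}g(0)] = (\lambda+i\alpha)e^{At}\Delta u_0 \in C([0,T];V)$. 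You instead write down the differentiated Wentzell ibvp for $v$, solve it by a second application of Lemma \ref{l:sem}, and then identify $v$ with $u_t$ by integrating in time and invoking uniqueness from Theorem \ref{theo T}. Both routes yield exactly the same formula for $u_t$ (your $e^{At}v(0)$ is the paper's $Ae^{At}[u_0-\mathcal{N}g(0)]+e^{At}f(0)$); the paper's approach spares the identification step, while yours avoids the dual-space differentiation and makes the role of the compatibility condition (iii) visible at the PDE level as the consistency of the boundary condition for $U$. The concluding $H^2$ regularity argument is identical in both.
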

 	\begin{proof}
 		Regarding this proof, we shall make use of the formula \eqref{Semigroup}. First, we notice that the conditions imposed on the initial data are equivalent to saying that $ u_0 - \mathcal{N} g(0) \in D(A) .$  Indeed,  this follows
 		from noticing that $ u_0 - \mathcal{N} g(0) \in D(A) $ translates into the following conditions:
 		\begin{equation}\label{comp}
 		\begin{cases}
 		\hspace*{.2cm}\dis \frac{\partial\,(u_0-\mathcal{N}g(0))}{\partial\,\nu}   &= -(\la+i\,\al) \Delta (u_0-\mathcal{N}g(0))\\
        \Rightarrow \displaystyle\frac{\partial\,u_0}{\partial\,\nu} - g(0)   &= -(\la+i\,\al) \Delta u_0\\
 		\Delta (u_0 - \mathcal{N} g(0) )  &\equiv \Delta u_0 \in V. \end{cases}
 		\end{equation}
 		We differentiate the solutions in the sense of duality,  and the calculation is therefore performed in dual spaces.

 		On the other hand,  due to \cite[Theorem 2.4(b), pg.5]{Pazy}, it follows that
 		\begin{eqnarray}
 		\label{f(t)}A\int_0^t e^{A\,\tau}\,{f}(t-\tau)\,d\tau&=&e^{A\,t}\,f(0)-{f}(t).\\
 		\label{Ng(t)}A\int_0^t e^{A\,\tau}\,{\mathcal{N}}g(t-\tau)\,d\tau&=&e^{A\,t}\,\mathcal{N}g(0)-{\mathcal{N}}g(t).
 		\end{eqnarray}
 		Now, employing the Leibniz integral rule,  taking into account  \eqref{f(t)} and \eqref{Ng(t)}, we obtain
 		\begin{equation}
 		\label{der1}\begin{aligned}
 		\frac{d}{dt}\int_0^t e^{A(t-s)}\,{f}(s)\,ds&=\int_0^t \frac{d}{dt}\left[e^{A(t-s)}\,{f}(s)\right]\,ds+{f}(t)\\
 		&=A\int_0^t e^{A(t-s)}\,{f}(s)\,ds+\int_0^t e^{A(t-s)}\,{f}_t(s)\,ds+{f}(t)\\
 		&=A\int_0^t e^{A\,\tau}\,{f}(t-\tau)\,d\tau+\int_0^t e^{A(t-s)}\,{f}_t(s)\,ds+{f}(t)\\
 		&=e^{At}{f}(0)-\cancel{{f}(t)}+\int_0^t e^{A(t-s)}\,{f}_t(s)\,ds+\cancel{{f}(t)}
 		\end{aligned}
 		\end{equation} and
 		{\small\begin{equation}
 			\begin{aligned}\label{der2}
 			-\frac{d}{dt}A\int_0^t e^{A(t-s)}\,{\mathcal{N}}g(s)\,ds&=-A\left[\int_0^t \frac{d}{dt}\left[e^{A(t-s)}\,{\mathcal{N}}g(s)\right]\,ds+{\mathcal{N}}g(t)\right]\\&=-A\left[A\int_0^t e^{A(t-s)}\,{\mathcal{N}}g(s)\,ds+\int_0^t e^{A(t-s)}\,{\mathcal{N}}g_t(s)\,ds+{\mathcal{N}}g(t)\right]
 			\\&=-A\left[A\int_0^t e^{A\,\tau}\,{\mathcal{N}}g(t-\tau)\,d\tau+\int_0^t e^{A(t-s)}\,{\mathcal{N}}g_t(s)\,ds+{\mathcal{N}}g(t)\right]\\&=-A\left[e^{At}{\mathcal{N}}g(0)-\cancel{{\mathcal{N}}g(t)}+\int_0^t e^{A(t-s)}\,{\mathcal{N}}g_t(s)\,ds+\cancel{\mathcal{N}g(t)}\right]\,.
 			\end{aligned}
 			\end{equation}} Taking the derivate of \eqref{Semigroup},  using \eqref{der1} and \eqref{der2}, we obtain the following identity:
 		{\small\begin{equation}\label{yt}\begin{aligned}
 			u_t(t) &=A e^{At} u_0-\frac{d}{dt}A\int_0^t e^{A(t-s)}\,{\mathcal{N}}g(s)\,ds+\frac{d}{dt}\int_0^t e^{A(t-s)}\,{f}(s)\,ds\\&= A e^{At} u_0 + e^{At} {f}(0) + \int_0^t e^{A(t-s)} {f}_t(s) ds  \\ &- Ae^{At} \mathcal{N } g(0)   - A \int_0^t e^{A(t-s)}{\mathcal{N}} g_t (s) ds \\&= A e^{At} [ u_0 - \mathcal{N }g(0) ] + e^{At} {f}(0)   \\&+  \int_0^t e^{A(t-s)} {f}_t(s) ds    - A \int_0^t e^{A(t-s)}{\mathcal{N}} g_t (s) ds  \\
 			&= A e^{At} [ u_0 - {\mathcal{N}} g(0) ] + e^{At} {f}(0) +  \int_0^t e^{A(t-s)} {f}_t(s) ds - \mathcal{L}g_t(t)  \\ &= I + II + III + IV,
 			\end{aligned}
 			\end{equation}} where $ \mathcal{L} $ is the map given in  \eqref{Lmap}.\\
 		
 		Assuming that $u_0 - \mathcal{N} g(0) \in D(A) $, we infer that the term $ I $ belongs  to $C([0,T];V). $ Now, since $f\in W^{1,1} (0,T; V) \hookrightarrow C([0,T]; V)  $, we also obtain that $II \in C([0,T]; V) $. Similarly,  $III \in C([0,T];V) $ by  a standard semigroup argument.

 		 Regarding the last term, we recall the regularity property of the map $ \mathcal{L} $ stated in \eqref{Lmap}. Therefore, $ u_t\in\,C([0,T];V). $ From this  and the fact that $ f\in\,C([0,T];V), $ we obtain that $ \Delta\,u\in\,C([0,T];V). $ In particular, $\Delta u|_{\Gamma_1} \in C([0,T];H^{1/2}(\Gamma_1) ) $.

Now suppose $g$ satisfies the additional assumption $g\in C([0,T]; H^{1/2}(\Gamma_1)). $ Then, since $ \dis \frac{\partial\,u}{\partial\,\nu}=-(\la+i\,\al)\Delta\,u +g, $ we conclude that $\dis \frac{\partial\,u}{\partial\,\nu} \in C([0,T]; H^{1/2}(\Gamma_1) )$. Hence, by elliptic regularity we obtain that $u\in C([0,T]; H^2(\Omega))  $.\\
 	
 		Now, thanks to the compatibility  condition described in \eqref{comp}, and due to \cite[Theorem 2.4(c)]{Pazy}, we have
 		\begin{equation}\label{comp_initial}
 		A\,e^{At} [ u_0 - \mathcal{N} g(0) ]=e^{At}\,A  [ u_0 - \mathcal{N} g(0) ]=(\la+i\,\al)e^{At}\,\Delta\,u_0.
 		\end{equation}
 		
 		It is now an opportune moment to apply the same scenario for $ \mathcal{L} $ in \eqref{Lmap} to $ \int_0^t e^{A(t-s)} {f}_t(s) ds$ in order to obtain an estimate for $ \|v_t\|_{C([0,T];V)} $. In other words, the map $ \mathcal{K} $ given by ${f}_t(t) \mapsto \int_0^t e^{A(t-s)} {f}_t(s) ds $  is continuous from $L^1(0,T; V)$ into $ C([0,T];V), $ keeping in mind that assumption (i) holds true.  Combining this, \eqref{comp_initial}, the continuity of the map $ \mathcal{L} $, and \eqref{yt}, it follows that{\small\begin{equation}\label{vt}
 			\begin{aligned}
 			\|u_t\|_{C([0,T];V)} &\leq C\left\{\|\Delta\,u_0\|_V +\|f\|_{C([0,T];V)}+\|\mathcal{K}f_t\|_{C([0,T];V)}+\|\mathcal{L}g_t\|_{C([0,T];V)}\right\}\\
 			&\leq C\Big[\|u_0\|_V+\|\Delta\,u_0\|_V+\|f\|_{C([0,T];V)}+\|f_t\|_{L^1(0,T; V)}+\|{g}_t\|_{L^2(0,T; L^2(\Ga_1))}\Big]\\
 			&\leq C\Big[\|u_0\|_V+\|\Delta\,u_0\|_V+\|f\|_{W^{1,1}(0,T; V)}+\|{g}\|_{H^1(0,T; L^2(\Ga_1))}\Big],
 			\end{aligned}
 			\end{equation}} where $ C=C(\al, \la,\,T). $

 	\end{proof}
 	\begin{rem}
 		The function $u$ obtained above in the class $C([0,\infty); H^2(\Omega)) $  is a solution to the  problem \eqref{LSf}, that is,
 		\begin{equation}\label{eqbridge}
 		\begin{cases}
 		(\la+i\,\al)\Delta\,u= u_t+\,{f}\quad&\text{ in } \Omega\,\times\,[0,T]\\
 		\dis \frac{\partial\,u}{\partial\,\nu}=-(\la+i\,\al)\,\Delta\,u+{g}\quad&\text{ on } \Gamma_1\,\times\,[0,T]
 		\end{cases}
 		\end{equation} so that $ f\in\,V\,\hookrightarrow\,L^2(\Om)$ and $g\in\,H^{1/2}(\Ga_1). $ So, denoting ~ $ C=C(\al, \la, \,T), $ from the continuity of the trace map $ \gamma_0: H^1(\Om) \rightarrow\,H^{1/2}(\Ga_1) $ and \eqref{eqbridge}, we obtain:
 		{\small	\begin{equation}\label{remH2space}
 			\begin{aligned}
 			\|u(t)\|_{H^2(\Om)}&\leq C\left(\|u_t\|_{L^2(\Om)}+\|{f}\|_{L^2(\Om)}+\|{g}\|_{H^{1/2}(\Ga_1)}+|\lambda+i\,\alpha|\,\|\Delta\,u\|_{H^{1/2}(\Ga_1)}\right)\\
 			&\leq C\left(\|u_t\|_{V}+\|f\|_{V}+\|g\|_{H^{1/2}(\Ga_1)}+\|\Delta\,u\|_{V}\right)\\
 			&\leq C\left(\|u_t\|_{V}+\|f\|_{V}+\|g\|_{H^{1/2}(\Ga_1)}\right)\,.
 			\end{aligned}
 			\end{equation}}
 		
 		Combining Lemma \eqref{l:sem}, \eqref{vt} and \eqref{remH2space},  we get
 	\begin{align}\label{bridge_v}
 	\|u_t\|_{C([0,T]; V)} + \|u\|_{C([0,T]; H^2(\Omega)) }  &\leq C [ \|f\|_{H^{1}(0,T; V) }+\|g\|_{H^1(0,T; H^{1/2}(\Ga_1))}\\ &+\nonumber \| \Delta u_0\|_V   + \|u_0\|_V  ].
 	\end{align}

 \end{rem}
 	The above estimate applied to idbvp yields the following:
 	\begin{theo}\label{t:reg}
 		With  reference to (\ref{LSDynf}),   in addition to the assumptions in Corollary \ref{corollary_inho} assume also that:
 		\begin{enumerate}[(i)] 				
 			\item
 			$f \in H^1(0,T;V) $
 			\item
 			$ \Delta  u_0 \in V $ and $ \dis \frac{\partial\,u_0}{\partial\,\nu}   -  f|_{\Gamma_1} (0) = -(\la+\al\,i) \Delta u_0.   $
 		\end{enumerate}
 		Then
 		$u_t \in C([0,T], V ) $ with appropriate control of the estimates, that is,
 		$$ \|u_t\|_{C([0,T]; V)} + \|u\|_{C([0,T]; H^2(\Omega)) }  \leq C [ \|f\|_{H^{1}(0,T; V) } + \| \Delta u_0\|_V   + \|u_0\|_V  ]. $$
 	\end{theo}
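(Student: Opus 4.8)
The plan is to obtain Theorem~\ref{t:reg} as an immediate corollary of the regularity estimate \eqref{bridge_v} (equivalently, \eqref{teobridge1} together with \eqref{remH2space}) established in the preceding theorem for the general Wentzell problem \eqref{LSf}. The first step is to recognize the idbvp \eqref{LSDynf} as a special case of \eqref{LSf}: using the interior equation $u_t-(\lambda+i\alpha)\Delta u=f$ together with the dynamic boundary condition $\frac{\partial u}{\partial\nu}=-u_t$, one has on $\Gamma_1$
$$\frac{\partial u}{\partial\nu}=-u_t=-(\lambda+i\alpha)\Delta u-f|_{\Gamma_1},$$
so that \eqref{LSDynf} coincides with \eqref{LSf} for the boundary datum $g:=-f|_{\Gamma_1}$ (the sign is immaterial for the estimates). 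Thus the proof reduces to verifying that, under the hypotheses of Theorem~\ref{t:reg}, this choice of $g$ satisfies all the assumptions required by Lemma~\ref{l:sem} and by the preceding theorem.

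The second step carries this out using only the continuity of the trace map $\gamma_0:H^1(\Omega)\to H^{1/2}(\Gamma)\hookrightarrow L^2(\Gamma_1)$, applied in the time variable. Since $f\in H^1(0,T;V)\hookrightarrow L^1(0,T;V)$ and $f_t\in L^2(0,T;V)\hookrightarrow L^1(0,T;V)$, the interior hypotheses on $f$ and $f_t$ hold. Composing with $\gamma_0$ gives $g=-f|_{\Gamma_1}\in H^1(0,T;H^{1/2}(\Gamma_1))\hookrightarrow H^1(0,T;L^2(\Gamma_1))$, hence $g\in L^2(0,T;L^2(\Gamma_1))$, $g_t\in L^2(0,T;L^2(\Gamma_1))$, and also $g\in C([0,T];H^{1/2}(\Gamma_1))$ because $f\in H^1(0,T;V)\hookrightarrow C([0,T];V)$; this last fact is precisely what upgrades the conclusion from $u\in C([0,T];V)$ to $u\in C([0,T];H^2(\Omega))$. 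The requirement $\Delta u_0\in V$ is assumption (ii) of Theorem~\ref{t:reg}, and the compatibility identity in assumption (ii) is exactly the statement that $u_0-\mathcal{N}g(0)\in D(A)$ (cf. \eqref{comp}), which is the hypothesis under which the term $Ae^{At}[u_0-\mathcal{N}g(0)]$ appearing in \eqref{yt} is controlled. Checking this last equivalence carefully — i.e. that the (time-differentiated) compatibility condition at $t=0$ genuinely places $u_0-\mathcal{N}g(0)$ in $D(A)$, not merely formally — is the one point that requires attention; everything else is routine.

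Finally, I would apply \eqref{bridge_v} with this $g$ and absorb the boundary norms into interior ones via the trace inequality, $\|g\|_{H^1(0,T;H^{1/2}(\Gamma_1))}\le C_{\mathrm{tr}}\,\|f\|_{H^1(0,T;V)}$, to conclude
$$\|u_t\|_{C([0,T];V)}+\|u\|_{C([0,T];H^2(\Omega))}\le C\big[\|f\|_{H^1(0,T;V)}+\|\Delta u_0\|_V+\|u_0\|_V\big]$$
with $C=C(\alpha,\lambda,T)$; in particular $u_t\in C([0,T];V)$, as claimed. Since Theorem~\ref{t:reg} is just the translation of an already established estimate into the idbvp formulation, there is no substantial obstacle beyond the bookkeeping of the identification $g=-f|_{\Gamma_1}$ and the corresponding compatibility condition.
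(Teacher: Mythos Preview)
Your proposal is correct and follows essentially the same approach as the paper: identify the idbvp \eqref{LSDynf} with the Wentzell problem \eqref{LSf} via $g=f|_{\Gamma_1}$ (up to a sign that is indeed immaterial for the estimates), invoke the trace theorem to obtain $g\in H^1(0,T;H^{1/2}(\Gamma_1))$ from $f\in H^1(0,T;V)$, and then read off the conclusion directly from \eqref{bridge_v}. Your write-up is in fact more thorough than the paper's three-line proof, since you explicitly check each hypothesis of the preceding theorem and the compatibility condition \eqref{comp}.
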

 	\begin{proof}
 		It suffices to apply the previous result with $g \equiv f|_{\Gamma_1}$. Since $f\in H^1(0,T;V) $, by trace theorem we have that $ f|_{\Gamma_1} \in H^1(0,T;H^{1/2}(\Gamma_1))
 		\hookrightarrow H^1(0,T; V ) $. From \eqref{bridge_v}, the desired inequality follows.\\
 		 	\end{proof}

\section{Nonlinear Perturbations}
\setcounter{equation}{0}
 	Our aim  in this section is to prove Theorem \ref{D(A_f)} and Corollary \ref{C:2}.  We will construct solutions for the nonlinear  model \eqref{NLSLip}, where $g(z)$ satisfies the conditions given in Assumption \ref{Assumption_g}.  Moreover, the  function  $f$ is assumed to satisfy \eqref{f}.  As in previous sections,  the well-posedness is obtained by converting this idbvp into a Wentzell ibvp. Namely, we replace $g(u_t)$ on the boundary with $g((\lambda+i\,\alpha)\Delta u + h(u)),$ where the function  $h$ satisfies \eqref{h}. Here, we consider the operator $ A_f $ given in \eqref{operator_Af} with its domain characterized by \eqref{D(A_f)}. 		
 			First, we prove the $\omega-$ maximal dissipativity of the operator $A_f$:\\

 		\noindent{\bf Dissipativity:}
 			
 			Since Since the operator $A_f$ given in \eqref{operator_Af} is nonlinear we will have to take the difference of two solutions. First, we observe by Green's theorem that
 			{\begin{equation}\label{Green} \begin{aligned}
 				(A_fu,v)_V &=   (\lambda+i\,\alpha)(\nabla \cdot\Delta u, \nabla v)_{L^2(\Omega)} + (f(u),v)_V\\
 				&=  -(\lambda+i\,\alpha)(\Delta u,\Delta v)_{L^2(\Omega)} + (\lambda+i\,\alpha)\ds\left(\Delta u, \frac{\partial v}{\partial \nu}\right)_{L^2(\Gamma_1)}\\
 				&+ (f(u),v)_V\\
 				&=  -(\lambda+i\,\alpha)(\Delta u,\Delta v)_{L^2(\Omega)} + \ds\left(g^{-1}\left(-\frac{\partial u}{\partial \nu}\right), \frac{\partial v}{\partial \nu}\right)_{L^2(\Gamma_1)}\\
 				&- \ds\left(h(u), \frac{\partial v}{\partial \nu}\right)_{L^2(\Gamma_1)}+ (f(u),v)_V.
 				\end{aligned}\end{equation}}
 				Hence if we consider the difference between two solutions $u,v\in V$, taking real
parts, recalling Assumption \ref{Assumption_g}:
 				\begin{equation}\label{LipDis}\begin{aligned}
 				(A_f u - A_f v, u-v)_V &\le -(\lambda+i\,\alpha)\|\Delta u-\Delta v\|^2_{L^2(\Omega)} -m\left\|\frac{\partial u}{\partial \nu} - \frac{\partial v}{\partial \nu}  \right\|^2_{L^2(\Gamma_1)}
 				\\ &-  \left(h(u)-h(v), \frac{\partial u}{\partial \nu}-\frac{\partial v}{\partial \nu}\right)_{L^2(\Gamma_1)}\\
 				&+  (f(u)-f(v),u-v)_V .
 				\end{aligned}\end{equation}
 				Using the Cauchy-Schwarz inequality, we get
 				\begin{equation}\label{dis0} \begin{aligned}
 				\text{Re}\left(h(u)-h(v), \frac{\partial u}{\partial \nu}-\frac{\partial v}{\partial \nu}\right)_{L^2(\Gamma_1)} &\leq \| h(u)-h(v)\|_{L^2(\Gamma_1)} \left\|\frac{\partial u}{\partial \nu}-\frac{\partial v}{\partial \nu}\right\|_{L^2(\Gamma_1)}
 				\end{aligned}
 				\end{equation}
 				and
 				\begin{equation*} \text{Re}(f(u)-f(v),u-v)_V \leq \| f(u)-f(v)\|_V \| u-v\|_V .\end{equation*}
 				At this point, we should emphasize that the Lipschitz continuity of $h$ and $f$  plays an essential role here. Since $\|h(u)-h(v)\|_{H^{1/2}(\Gamma_1)} \leq K\| u-v\|_{V}$, we get
 				\begin{equation}\label{dis1} \| h(u)-h(v)\|_{L^2(\Gamma_1)} \left\|\frac{\partial u}{\partial \nu}-\frac{\partial v}{\partial \nu}\right\|_{L^2(\Gamma_1)} \leq K\| u-v\|_V \left\|\frac{\partial u}{\partial \nu}-\frac{\partial v}{\partial \nu}\right\|_{L^2(\Gamma_1)}, \end{equation}
 				and since $\|f(u)-f(v)\|_{V} \leq L\| u-v\|_{V}$, we have
 				\begin{equation}\label{dis2} \| f(u)-f(v)\|_V \| u-v\|_V \leq L\| u-v\|^2_V .\end{equation}

The right hand side of \eqref{dis1} can be estimated by
 				\begin{equation}\label{dis4} K\,\| u-v\|_V \left\|\frac{\partial u}{\partial \nu}
 				-\frac{\partial v}{\partial \nu}\right\|_{L^2(\Gamma_1)} \leq \ds\eta\, K^2 \| u-v\|^2_V
 				+ \ds\frac{1}{4\,\eta} \left\|\frac{\partial u}{\partial \nu}-\frac{\partial v}{\partial \nu}\right\|^2_{L^2(\Gamma_1)} .\end{equation}
 				Combining (\ref{dis2}) and (\ref{dis4}) with (\ref{LipDis}),
 				\begin{equation}\label{LipDis2}\begin{aligned}
 				\text{Re} (A_f u - A_f v, u-v)_V&\leq -\lambda\,\|\Delta u-\Delta v\|^2_{L^2(\Omega)} +  \left(\frac{1}{4\,\eta}-m\right)\left\|\frac{\partial u}{\partial \nu} - \frac{\partial v}{\partial \nu}\right\|^2_{L^2(\Gamma_1)}
 				\\&+ [{\eta} K^2+L]\| u-v\|^2_V.
 				\end{aligned}\end{equation}
 				Now, since $\lambda>0,$ by taking $\omega > {\eta} K^2+L$ with $ \eta $ large enough, we may conclude that
 				\begin{equation} \text{Re} (A_f u - A_f v - \omega I(u-v), u-v)_V \le 0,\end{equation}   which proves that the operator $A$ is $\omega$ -- dissipative.\\

\noindent{\bf Maximality:}
 				
At this point, we consider the space  $Z$ given in \eqref{space Z}. Now, we  define
 			\begin{eqnarray}\label{cont} a(u,v) = \theta\, (u,v)_V &+ & (\lambda+i\,\alpha)(\Delta u, \Delta v)_{L^2(\Omega)} + \left(g^{-1}\left(\frac{\partial u}{\partial \nu}\right), \frac{\partial v}{\partial \nu}\right)_{L^2(\Gamma_1)} \\\notag
 				&- & (f(u),v)_V + \left(h(u), \frac{\partial v}{\partial \nu}\right)_{L^2(\Gamma_1)}. \end{eqnarray}

 				We shall show  that this form is continuous and coercive so that the Browder-Minty theorem can be applied. This will imply that, for every $j\in V\subset Z^\prime$, there exists a unique $u\in Z$ satisfying
 				\begin{equation*} a(u,v) = (-j,v)_V  \text{ for all } v\in Z, \end{equation*}
 				for some value of $\theta$ such that $\text{Re}(\theta)$ is sufficiently large.\\
 				
 Using the triangle inequality, and the bounds on $f, g$, and $h$, we infer
 				\begin{eqnarray}\label{cont2}  |a(u,v)|& \leq& |\theta\, (u,v)_V| +  (\lambda^2+\alpha^2)|(\Delta u, \Delta v)_{L^2(\Omega)}| \\
 				&+& M\left\|\frac{\partial u}{\partial \nu}\right\|_{L^2(\Gamma_1)}^2 \nonumber+ L\|u\|_V\| v\|_V + K\| u\|_V \left\| \frac{\partial v}{\partial \nu}\right\|_{L^2(\Gamma_1)}, \end{eqnarray}
 				for which there exists a bound $C(\lambda, \theta, \alpha,  M, L, K)$ such that
 				\begin{equation} |a(u,v)| \leq C(\lambda, \theta, \alpha, M, L, K) \|u\|_Z \| v\|_Z .\end{equation}
 				\\
 				For coercivity, observe that
 				{\small \begin{align}\label{coer1} a(u,u) &= \theta\,\| u\|^2_V +  (\lambda+i\,\alpha)\|\Delta u\|^2_{L^2(\Omega)} +  \left(g^{-1}\left(\frac{\partial u}{\partial \nu}\right), \frac{\partial u}{\partial \nu}\right)_{L^2(\Gamma_1)} \\
 				&-  (f(u),u)_V - \left(h(u), \frac{\partial u}{\partial \nu}\right)_{L^2(\Gamma_1)}.\nonumber\end{align}}
 				Now, for any complex number  $z= x+iy$,  the inequality $\sqrt{2}|z| \geq |x| + |y|$  can be applied.
 				Moreover, employing Assumption \ref{Assumption_g}, and considering $\text{Im}(\theta) \geq 0$, we obtain
 				{\small\begin{equation}
 					\begin{aligned}\label{coer2}
 					|a(u,u)|\\
&\geq\dfrac{\sqrt{2}}{2}\,\Biggl|\Re(\theta)\|u\|_V^2+\Re\left(g^{-1}\!\left(\frac{\partial u}{\partial \nu}\right), \frac{\partial u}{\partial \nu}\right)_{L^2(\Gamma_1)}- \Re(f(u),u)_V\Biggr.\\
&\Biggl.+\lambda\|\Delta\,u\|_{L^2(\Om)}^2- \Re\left(h(u), \frac{\partial u}{\partial \nu}\right)_{L^2(\Gamma_1)}\Biggr|\\
 					&+\dfrac{\sqrt{2}}{2}\,\Biggl|\Im(\theta)\|u\|_V^2+\alpha\,\|\Delta\,u\|_{L^2(\Om)}^2- \Im(f(u),u)_V \Biggr.-\Biggl. \Im\left(h(u), \frac{\partial u}{\partial \nu}\right)_{L^2(\Gamma_1)}\Biggr|\\
 					&\geq\hspace*{0.5cm}\frac{\sqrt{2}}{2}\left(\Re(\theta)\|u\|_V^2+(\lambda+\alpha)\|\Delta\,u\|_{L^2(\Om)}^2
 					+m\,\left\|\frac{\partial u}{\partial \nu}\right \|_{L^2(\Gamma_1)}^2\right)\\&
 					+\frac{\sqrt{2}}{2}\underbrace{\Im (\theta)}_{\geq 0}\,\|u\|_V^2-\sqrt{2}\left|\left(f(u),u\right)_{V}\right|
 					- \sqrt{2}\left|\left(h(u),\frac{\partial u}{\partial \nu}\right)_{L^2(\Ga_1)}\right|\\
 					&\geq\frac{\sqrt{2}}{2}\left(\Re(\theta)\|u\|_V^2+(\lambda+\alpha)\|\Delta\,u\|_{L^2(\Om)}^2
 					+m\,\left\|\frac{\partial u}{\partial \nu}\right\|_{L^2(\Gamma_1)}^2\right)-\sqrt{2}\,\|f(u)\|_V\,\|u\|_V\\
 					&- \sqrt{2}\,\|h(u)\|_{L^2(\Gamma_1)}\, \left\|\frac{\partial u}{\partial \nu}\right\|_{L^2(\Gamma_1)}\,.\end{aligned}
 					\end{equation}
 					}
 					
 					Making use of  the estimates (\ref{dis2}) and (\ref{dis4})
 					obtained by using the the Lipschitz boundness  of $f$ and $h$, continuity of trace from $V$ into $L^2(\Gamma_1)$, and the Young's inequality for $ \eta>0$ large enough,
 					we get the following estimate:
 					{\small\begin{align}\label{coer3}|a(u,u)| &\geq \frac{\sqrt{2}}{2}\,\text{Re}(\theta)\|u\|^2_V +
 						\frac{\sqrt{2}}{2}(\lambda+\alpha)\,\|\Delta u\|^2_{L^2(\Omega)} +
\left( 						\frac{\sqrt{2}\,m}{2}-\frac{1}{4\,\eta}\right)\ds\left\|\frac{\partial u}{\partial \nu}\right\|_{L^2(\Gamma_1)}^2\\
 						&-\nonumber {\sqrt{2}}\,\left[L + \eta\,K^2\right] \|u\|^2_V
 						\\
 						&=\frac{\sqrt{2}}{2}\,\left[\text{Re}(\theta)-2L - 2\eta\,K^2\right]\|u\|^2_V\nonumber+(\lambda+\alpha) \|\Delta u\|^2_{L^2(\Omega)} + \left( 						\frac{\sqrt{2}\,m}{2}-\frac{1}{4\,\eta}\right)\ds\left\|\frac{\partial u}{\partial \nu}\right\|_{L^2(\Gamma_1)}^2\nonumber
 						\\&\nonumber\geq  C \|u\|^2_Z \end{align}}
 						for some constant $C > 0$ as long as  $ \text{Re}(\theta) > 2L + 2\eta\,K^2$ and $ \eta $ is large enough. \\
 						
 						So, recalling the Browder -- Minty Theorem, if  $\omega > 2L + 2\eta K^2$,
 						the operator $A_f - \omega I$  will be maximal dissipative. From this fact, by the  Lumer -- Philips theorem, the operator $A_f$ generates a strongly continuous semigroup, and therefore Theorem \ref{D(Af)} and Corollary \ref{C:2} are proved.
				
 	   		\section{Local Well-posedness of Strong Solutions  }
 	   		\setcounter{equation}{0}
The main goal of this section is to prove the local existence of solutions (Theorem \ref{T1}) for the problem \eqref{GL_equation_1} at $ H^2 $-level for $ N\le 3. $    We have proved that the linear model with a forcing function $f:\Omega\times (0,T)\to V$ given in \eqref{LSf}
is well-posed in $V$ with appropriate control estimates of the solution map given in Theorem \ref{t:reg}.

We set \begin{equation}\label{F(u)}
F(u) = -(\kappa+i\,\beta)|u|^{p-1}\,u+\ga\,u\,.
\end{equation} To acquire the estimates given in Theorem \ref{t:reg}, we differentiate equation (\ref{GL_equation_1}) in time in the distributional sense. In fact,  let $w = u_t$, then
   	\begin{equation}\label{weq}\begin{cases} w_t-(\la+i\al)\Delta w =F_t  (u,w)& \text{ in }\Omega, \\ w = 0 & \text{ on } \Gamma_0, \\ \ds\frac{\partial w}{\partial \nu} + w_t =0 & \text{ on }\Gamma_1, \\
   	w(0) = w_0 & \text{ in }~ \Omega, \end{cases}\end{equation}
   	where
   	\begin{eqnarray} \label{Ft} F_t  (u,w) &\equiv&   -(\kappa+i\beta)\left\{{\frac{(p+1)}{2}}|u|^{p-1}w+{\frac{(p-1)}{2}}|u|^{p-3}u^2\bar{w}\,\right\}+\gamma\,w  \notag,  \\
   	w_0 &\equiv&   (\la+i\al) \Delta\,u_0 -(\kappa+i\beta)|u_0|^{p-1}u_0+\ga\,u_0 .
   	\end{eqnarray}

   	We will be looking at a (desirably unique) fixed point of the map
   	$$ K : C(0,T; H^2(\Omega) \cap V \times V ) \rightarrow C(0,T; H^2(\Omega) \cap V \times V ), $$
   	defined by
   	$$ K(u^\star,w^\star ) = (u,w),$$
   	where
   	$u$ satisfies \eqref{GL_equation_1} with $F(u^\star) $ and $w$ satisfies (\ref{weq}) with $F_t(u^\star, w^\star ) $.
   	Once such a fixed point is found, it is routine to show that $u$ and $u_t $ are strong solutions to (\ref{GL_equation_1}).

   	In order to establish the existence of a suitable fixed point we need a priori estimates.
   	We begin with some preliminary nonlinear estimates which will be useful both for local and global theory.
   	\begin{lem}\label{l:est} Let $F(u)$ and $ F_t(u,w) $ be given by \eqref{F(u)} and (\ref{Ft}), respectively.  Given $u\in H^2(\Omega)\cap V$
   		and  $w \in V $,
        \begin{enumerate}[(i)]

\item 	if $N\le 3$ and $p\ge 1$, then \begin{equation}\label{F(u)H1} \|F(u)\|_{H^1(\Omega)} \leq C(\|u\|^{p-1}_{L^\infty(\Omega)}+1)\|u\|_{H^1(\Omega)}\leq C(\|u\|^{p-1}_{H^2(\Omega)}+1)\|u\|_{H^1(\Omega)}; \end{equation}
   	\item if $N\le 3$ and $p\ge 2$, then	\begin{equation}\label{F(u)H2} \|F(u)\|_{H^2(\Omega)} \leq (C\|u\|^{p-1}_{L^\infty(\Omega)}+1)\|u\|_{H^2(\Omega)}  \leq (C\|u\|^{p-1}_{H^2(\Omega)}+1)\|u\|_{H^2(\Omega)} ;\end{equation}
\item    if $N\le 3$ and $p\ge 2$, then 		\begin{equation}\label{F_t_V} \begin{aligned}
   		\|F_t(u,w)\|_{V} &\leq C\,\|\nabla\,w\|_{L^2(\Omega)}\,\left\{\|u\|_{H^2(\Om)}^{p-1} +1\right\};
   		\end{aligned}\end{equation}
   \item if $N=1$ and $p\ge 2$, then \begin{equation}\|F_t(u,w)\|_{V}\le C\|w\|_V\left(1+\|u\|_V^{p-1}\right);\end{equation}
          \item if $N = 2$ and $p\ge 2$, then 	\begin{equation}\|F_t(u,w)\|_{V} \leq C\|w\|_{V}\left(\|u\|_{H^2(\Om)}^{\theta+\frac{p-2}{2}}\|u\|_V^{1-\theta}\|u\|_{2}^{\frac{p-2}{2}}\right.\left.+\|u\|_{H^2(\Omega)}^{\frac{p-1}{2}}\|u\|_{2}^{\frac{p-1}{2}}+1\right)\,,\end{equation} where $1>\theta>0$ can be chosen small.  Moreover, for $p\in [2,5]$ and small $\theta$, one has \begin{equation}\|F_t(u,w)\|_{V} \leq C\|w\|_{V}\left(\|u\|_{H^2(\Om)}^2+\|u\|_{V}^{\tau}+1\right)\,\end{equation} where $\tau=\tau(p,\theta)>0$;
           \item if $N=3$ and $p\ge 2$, then \begin{equation}\|F_t(u,w)\|_{V} \leq C\|\,w\|_{V}\left(\|u\|_{H^2(\Om)}^{\theta+\frac{3(p-2)}{4}}\|u\|_V^{1-\theta}\|u\|_{2}^{\frac{p-2}{4}}\right.\left.+\|u\|_{H^2(\Omega)}^{\frac{3(p-1)}{4}}\|u\|_{2}^{\frac{p-1}{4}}+1\right)\,,\end{equation}
 where $1>\theta>0$ can be chosen small. Moreover, for $\displaystyle p\in \left[2,\frac{11}{3}\right]$ and small $\theta$, one has \begin{equation}\|F_t(u,w)\|_{V} \leq C\|w\|_{V}\left(\|u\|_{H^2(\Om)}^2+\|u\|_{V}^{\tau}+1\right)\,\end{equation} where $\tau=\tau(p,\theta)>0$.

        \end{enumerate}

   	\end{lem}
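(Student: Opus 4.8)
The plan is to reduce every inequality to three ingredients: pointwise chain/product-rule bounds for the gradients of the nonlinearity; H\"older's inequality together with the Sobolev embeddings available for $N\le 3$ (namely $H^2(\Omega)\hookrightarrow L^\infty(\Omega)$ always, $H^1(\Omega)\hookrightarrow L^6(\Omega)$ if $N=3$, $H^1(\Omega)\hookrightarrow L^q(\Omega)$ for every $q<\infty$ if $N=2$, $H^1(\Omega)\hookrightarrow L^\infty(\Omega)$ if $N=1$) and Poincar\'e's inequality on $V$; and Gagliardo--Nirenberg interpolation followed by Young's inequality. The linear terms $\gamma u$ and $\gamma w$ are harmless and get absorbed into the ``$+1$'' on the right-hand sides. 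Concretely, for $F(u)=-(\kappa+i\beta)|u|^{p-1}u+\gamma u$ one has, writing $|u|^2=u\bar u$, the pointwise bounds $|\nabla(|u|^{p-1}u)|\le p\,|u|^{p-1}|\nabla u|$ for $p\ge1$ and $|\nabla^2(|u|^{p-1}u)|\le C(|u|^{p-1}|\nabla^2 u|+|u|^{p-2}|\nabla u|^2)$ for $p\ge2$; combined with $\||u|^{p-1}u\|_{L^2(\Omega)}\le\|u\|_{L^\infty(\Omega)}^{p-1}\|u\|_{L^2(\Omega)}$ this gives (i), and for (ii) the only term needing care, $\||u|^{p-2}|\nabla u|^2\|_{L^2(\Omega)}\le\|u\|_{L^\infty(\Omega)}^{p-2}\|\nabla u\|_{L^4(\Omega)}^2$, is handled with the Gagliardo--Nirenberg inequality $\|\nabla u\|_{L^4(\Omega)}^2\le C\|u\|_{L^\infty(\Omega)}\|u\|_{H^2(\Omega)}$ (valid for $N\le3$), turning it into $C\|u\|_{L^\infty(\Omega)}^{p-1}\|u\|_{H^2(\Omega)}$. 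The second inequalities in (i) and (ii) then follow from $H^2(\Omega)\hookrightarrow L^\infty(\Omega)$.

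For $F_t(u,w)$, note that up to the $\gamma w$ term it is a sum of terms bounded pointwise by $|u|^{p-1}|w|$, so the chain/product rule gives $|\nabla F_t(u,w)|\le C(A+B)+|\gamma|\,|\nabla w|$ with $A:=|u|^{p-2}|\nabla u|\,|w|$ and $B:=|u|^{p-1}|\nabla w|$. Parts (iii) and (iv) only require a crude estimate: $\|B\|_{L^2(\Omega)}\le\|u\|_{L^\infty(\Omega)}^{p-1}\|\nabla w\|_{L^2(\Omega)}$, while $\|A\|_{L^2(\Omega)}\le\|u\|_{L^\infty(\Omega)}^{p-2}\|\nabla u\|_{L^3(\Omega)}\|w\|_{L^6(\Omega)}$; for $N\le3$ one has $\|\nabla u\|_{L^3(\Omega)}\le C\|u\|_{H^2(\Omega)}$ and, by Sobolev and Poincar\'e on $V$, $\|w\|_{L^6(\Omega)}\le C\|\nabla w\|_{L^2(\Omega)}$, which yields (iii); for $N=1$ one simply replaces every $L^\infty(\Omega)$ norm by $\|\cdot\|_V$ using $V\hookrightarrow L^\infty(\Omega)$ and obtains (iv).

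For the sharp bounds (v) ($N=2$) and (vi) ($N=3$), where $H^1(\Omega)$ no longer embeds in $L^\infty(\Omega)$, I treat $B$ via $\|B\|_{L^2(\Omega)}\le\|u\|_{L^\infty(\Omega)}^{p-1}\|\nabla w\|_{L^2(\Omega)}$ and then bound $\|u\|_{L^\infty(\Omega)}$ by the Gagliardo--Nirenberg inequality $\|u\|_{L^\infty(\Omega)}\le C\|u\|_{H^2(\Omega)}^{N/4}\|u\|_{L^2(\Omega)}^{1-N/4}$ (legitimate for $N\le3$), which produces exactly the second summands of (v), (vi) (with $N/4=\tfrac12$ when $N=2$ and $N/4=\tfrac34$ when $N=3$). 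For $A$ I use a three-factor H\"older inequality $\|A\|_{L^2(\Omega)}\le\|u\|_{L^\infty(\Omega)}^{p-2}\|\nabla u\|_{L^{r}(\Omega)}\|w\|_{L^{r'}(\Omega)}$ with $\tfrac1r+\tfrac1{r'}=\tfrac12$ and $r'$ chosen inside the Sobolev range of $V$ (any $r'<\infty$ if $N=2$, $r'\le6$ if $N=3$, so that $\|w\|_{L^{r'}(\Omega)}\le C\|\nabla w\|_{L^2(\Omega)}$), estimate $\|u\|_{L^\infty(\Omega)}^{p-2}$ as before, and estimate $\|\nabla u\|_{L^{r}(\Omega)}\le C\|\nabla u\|_{L^2(\Omega)}^{1-\theta}\|\nabla^2 u\|_{L^2(\Omega)}^{\theta}\le C\|u\|_V^{1-\theta}\|u\|_{H^2(\Omega)}^{\theta}$ by Gagliardo--Nirenberg, where $\theta$ is determined by $r$ ($\theta$ can be made arbitrarily small when $N=2$, whereas the ceiling $r'\le6$ forces $\theta$ down to $\tfrac12$ when $N=3$); multiplying the three factors reproduces the first summands of (v), (vi). Finally the ``moreover'' statements follow from two purely algebraic reductions: Poincar\'e's inequality $\|u\|_{L^2(\Omega)}\le C\|u\|_V$ eliminates the $\|u\|_{L^2(\Omega)}$ factors, after which every surviving term has the shape $\|w\|_V\,\|u\|_{H^2(\Omega)}^{a}\|u\|_V^{b}$, and Young's inequality rewrites it as $\|w\|_V(\varepsilon\|u\|_{H^2(\Omega)}^2+C_\varepsilon\|u\|_V^{\tau}+1)$ provided $a\le2$; the most restrictive instance of $a\le2$ comes from $B$, i.e.\ $\tfrac{N}{4}(p-1)\le2$, which is precisely $p\le5$ for $N=2$ and $p\le\tfrac{11}{3}$ for $N=3$, and one then checks that the exponent $a=\theta+\tfrac{N}{4}(p-2)$ arising from $A$ is automatically smaller than $2$ on these ranges.

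The main obstacle is the simultaneous exponent juggling in (v)--(vi): one must choose the H\"older exponents and the Gagliardo--Nirenberg parameters so as to respect the Sobolev ceiling of $V=H^1(\Omega)$ (only $L^6(\Omega)$ when $N=3$), keep the power of $\|u\|_{H^2(\Omega)}$ no larger than $2$, and keep the power of $\|u\|_{L^2(\Omega)}$ under control --- and this is feasible exactly on the stated $(p,N)$ ranges. As emphasized in the introduction, the lack of a conserved $L^2(\Omega)$-norm is what forces $\|u\|_{L^2(\Omega)}$ to be carried explicitly throughout; here it is ultimately innocuous thanks to Poincar\'e's inequality, but it is a genuine source of friction. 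Everything else is routine calculus together with standard inequalities.
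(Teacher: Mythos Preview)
Your argument is correct and follows essentially the same route as the paper: pointwise chain-rule bounds on $\nabla F$ and $\nabla F_t$, H\"older splitting into $\|u\|_{L^\infty}^{p-2}\|w\nabla u\|_{L^2}$ and $\|u\|_{L^\infty}^{p-1}\|\nabla w\|_{L^2}$, Gagliardo--Nirenberg for $\|\nabla u\|_{L^r}$ and $\|u\|_{L^\infty}$, and the Sobolev embeddings for $V$; for (ii) both you and the paper invoke the same $\|\nabla u\|_{L^4}^2\le C\|u\|_{L^\infty}\|u\|_{H^2}$ interpolation. One small point worth noting: for $N=3$ you correctly observe that the ceiling $r'\le 6$ forces $\theta\ge\tfrac12$, whereas the lemma statement (and the paper's proof, which uses $r=3$) asserts that $\theta$ ``can be chosen small''; your observation is the accurate one, but since $\tfrac12+\tfrac{3(p-2)}{4}\le\tfrac{7}{4}<2$ on the range $p\le\tfrac{11}{3}$, the ``moreover'' conclusion is unaffected either way.
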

   	\begin{proof}
Note that $$\nabla F(u) =   -(\kappa+i\beta)\left\{{\frac{(p+1)}{2}}|u|^{p-1}\nabla u+{\frac{(p-1)}{2}}|u|^{p-3}u^2\nabla\bar{u}\,\right\}+\gamma\,\nabla u.$$  Therefore,
$$|\nabla F(u)|\le C(|u|^{p-1}|\nabla u|+|\nabla u|).$$

   		The inequality (\ref{F(u)H1}) as well as  the second part  of (\ref{F(u)H2}) follow directly from embedding  $H^2(\Omega)\hookrightarrow\,L^\infty(\Omega)$ . The first inequality in (\ref{F(u)H2}) was proved  in \cite{Brezis-Gallouet} in the case of $p=3$ and $N=2$.  A similar proof also applies to the more general case given in this lemma.  The idea is to use the Gagliardo-Nirenberg inequality $$\|u\|_{W^{1,4}(\Omega)}\le \|u\|_{L^\infty}^{\frac{1}{2}}\|u\|_{H^2(\Omega)}^{\frac{1}{2}}.$$
   		
   		In order to prove inequality (\ref{F_t_V}), we first estimate $\nabla F_t(u,w)$:
   		{\small   			\begin{align} |\nabla F_t(u,w)|&\le C\left(|u|^{p-2}|\nabla u||w|+|u|^{p-1}|\nabla w|+|\gamma||\nabla w|\right)
   			\end{align}}
   		Setting $ C:=C(\kappa,\beta,p), $ by the triangle inequality, we have
   		\begin{equation}\label{F_t_V1} \begin{aligned}
   		\|F_t(u,w)\|_{V} &\leq C\left\{\|u\|_{L^\infty(\Om)}^{p-2}\,\|w\nabla u\|_{L^2(\Omega)}+\|u\|_{L^\infty(\Om)}^{p-1}\,\|\nabla\,{w}\|_{L^2(\Omega)}+|\ga|\,\|\nabla\,w\|_{L^2(\Om)}\right\}\,.
   		\end{aligned}\end{equation}  Now, we use $\|w\nabla u \|_{L^2(\Omega)}\le \|w\|_V\|u\|_{H^2(\Omega)}$ and $H^2(\Omega)\hookrightarrow L^\infty(\Omega)$
 to obtain \eqref{F_t_V}.\\

        If $N = 1$, then using $H^1(\Omega)\hookrightarrow L^\infty(\Omega),$ we can easily obtain from \eqref{F_t_V1} that $$\|F_t(u,w)\|_{V}\le C\|w\|_V\left(1+\|u\|_V^{p-1}\right).$$

        If $N = 2$, then  by using the Gagliardo-Nirenberg inequality $$\|\nabla u\|_{L^s(\Omega)}\le\|u\|_{H^2(\Omega)}^{\theta}\|u\|_V^{1-\theta},$$  where $\displaystyle s>2$ and $0<\displaystyle\theta=1-\frac{2}{s}<1,$ and the Sobolev embedding $H^1(\Omega)\hookrightarrow L^q(\Omega) $ ($q\ge 1)$, we obtain
 		\begin{equation}\begin{aligned}
   		\|w\nabla u\|_{L^2(\Omega)}&\leq C\|w\|_{L^r(\Omega)}\,\|\nabla\,u\|_{L^s(\Omega)}\\ &\leq C\|w\|_{V}\|u\|_{H^2(\Omega)}^\theta\|u\|_V^{1-\theta}
   		\end{aligned} \end{equation} where $r=\displaystyle\frac{2s}{s-2}$.  Moreover, for $N=2$, one has the Gagliardo-Nirenberg inequality:
$$\|u\|_{\infty}\le \|u\|_{H^2(\Omega)}^{\frac{1}{2}}\|u\|_{2}^{\frac{1}{2}}.$$

Therefore, from \eqref{F_t_V1} we obtain
	\begin{equation} \begin{aligned}
   		\|F_t(u,w)\|_{V} &\leq C\|w\|_{V}\left\{\|u\|_{H^2(\Om)}^{\theta+\frac{p-2}{2}}\|u\|_V^{1-\theta}\|u\|_{2}^{\frac{p-2}{2}}\right.\left.+\|u\|_{H^2(\Omega)}^{\frac{p-1}{2}}\|u\|_{2}^{\frac{p-1}{2}}+1\right\}\,.
   		\end{aligned}\end{equation}
Observe that in the above inequality, $\frac{p-1}{2}\le 2$ if $p\le 5.$

  If $N = 3$,we can again use the Gagliardo-Nirenberg inequality in the form $$\|\nabla u\|_{L^3(\Omega)}\le\|u\|_{H^2(\Omega)}^{\frac{1}{3}}\|u\|_V^{\frac{2}{3}}$$ and use the embedding $H^1(\Omega)\hookrightarrow L^6(\Omega)$ to obtain
 		\begin{equation}\begin{aligned}
   		\|w\nabla u\|_{L^2(\Omega)}&\leq C\|w\|_{L^6(\Omega)}\,\|\nabla\,u\|_{L^3(\Omega)}\\ &\leq C\|w\|_{V}\|u\|_{H^2(\Omega)}^\theta\|u\|_V^{1-\theta}.
   		\end{aligned} \end{equation}  Moreover, for $N=3$, one has the Gagliardo-Nirenberg inequality:
$$\|u\|_{\infty}\le \|u\|_{H^2(\Omega)}^{\frac{3}{4}}\|u\|_{2}^{\frac{1}{4}}.$$

Therefore, we obtain
	\begin{equation} \begin{aligned}
   		\|F_t(u,w)\|_{V} &\leq C\|\,w\|_{V}\left\{\|u\|_{H^2(\Om)}^{\theta+\frac{3(p-2)}{4}}\|u\|_V^{1-\theta}\|u\|_{2}^{\frac{p-2}{4}}\right.\left.+\|u\|_{H^2(\Omega)}^{\frac{3(p-1)}{4}}\|u\|_{2}^{\frac{p-1}{4}}+1\right\}\,.
   		\end{aligned}\end{equation}
Note that in the above case, $\frac{3(p-1)}{4}\le 2$ if $p\le \frac{11}{3}.$
   	\end{proof}

Now, we take into account the following compatibility condition:
 	   		\begin{defin}{[Compatibility Condition (CC)]} \label{CC} We consider
 	   			$$ \frac{\partial\,u_0}{\partial\,\nu}  +  (\la+i\al) \Delta\,u_0 -F(u_0) =0 ~\text{on} ~ \Gamma_1\,.$$
 	   		\end{defin}

 	   		We also define the following spaces:
 	   		\begin{equation*} X_0 =\begin{cases}
 	   		&(u_0,w_0)\in   V  \times  V\\
 	   		& w_0 =  (\la+i\al) \Delta\,u_0 -F(u_0)\\
 	   		&  \Delta u_0 \in V\\
 	   		& u_0   \text{  satisfies CC (Definition \ref{CC})}
 	   		\end{cases} \end{equation*} and the Banach space
 	   		\begin{equation*} X_T = \left\{(u,w): u\in C[0,T; H^2(\Omega)\cap H^1_{\Gamma_0}(\Omega)], w\in C[0,T; H^1_{\Gamma_0}(\Omega)), u_t = w\right\}\,.\end{equation*}

 	   		We note that from the elliptic theory, we get $$\Delta u_0 \in  V \hookrightarrow H^1(\Omega)  ~\text{and}  ~ \frac{\partial\,u_0}{\partial\,\nu} = - w_0|_{\Gamma_1} \in H^{1/2}(\Gamma). $$ So, we have  the implication: $ (u_0,w_0 ) \in X_0 \Rightarrow u_0 \in H^2(\Omega) $.
 	   		
 	   		\medskip

 	   		Thus, it makes sense to define  the following norms on  $X_0$ and $ X_T $ :
 	   		\begin{equation*} \|(u,w)\|^2_{X_0} = \|u\|^2_{H^2(\Omega)} + \|w\|^2_{V}, \end{equation*}
 	   		\begin{equation*} \|(u,w)\|^2_{X_T} = \ds\sup_{t\in [0,T]} \|u\|^2_{H^2(\Omega)} + \ds\sup_{t\in [0,T]} \|w\|^2_{V} .\end{equation*}
 	   		
 	   		We will prove Theorem \ref{T1} in several steps.\\
 	   		
 	   		\smallskip

 	   			{\bf Step 1: Setting the map. }  We will restrict the map $K$ defined previously to a special complete metric space in order to obtain a contraction.
 \begin{lem}\label{QTspace}
 	   		Let $$Q_T\equiv \{(u^*,w^*)\in X_T \text{ s.t. } u^*(0)=u_0\}.$$  Then,  $Q_T$ is a non-empty complete metric space with the metric induced from the norm of $X_T$, i.e., with the metric given by $$d_{Q_T}((u_1^*,w_1^*),(u_2^*,w_2^*))=\ds\sup_{t\in [0,T]} \|u_1^*-u_2^*\|^2_{H^2(\Omega)} + \ds\sup_{t\in [0,T]} \|w_1^*-w_2^*\|^2_{H^1_{\Gamma_0}(\Omega)}.$$
 \end{lem}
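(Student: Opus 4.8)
The plan is to establish the two assertions of Lemma \ref{QTspace} separately, beginning with non-emptiness. Here I would use that the pair $(u_0,w_0)$ belongs to $X_0$, so that $\Delta u_0\in V$, the compatibility condition (CC) of Definition \ref{CC} holds, and hence $u_0\in H^2(\Omega)\cap V$. I would freeze the nonlinearity at its initial value and solve the linear nonhomogeneous idbvp \eqref{LSDynf} with the constant-in-time source $f\equiv F(u_0)$, where $F$ is given in \eqref{F(u)}. By Lemma \ref{l:est}(i), $F(u_0)\in H^1(\Omega)$, and since $u_0=0$ on $\Gamma_0$ also $F(u_0)=0$ on $\Gamma_0$; thus $F(u_0)\in V$ and the constant map $t\mapsto F(u_0)$ lies in $H^1(0,T;V)$. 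Moreover, rewriting (CC) as $\frac{\partial u_0}{\partial\nu}-F(u_0)|_{\Gamma_1}=-(\lambda+i\alpha)\Delta u_0$ shows that hypothesis (ii) of Theorem \ref{t:reg} holds for this choice of $f$, while $u_0\in V$ and $f\in L^2(0,T;V)$ supply the hypotheses inherited from Corollary \ref{corollary_inho}. Theorem \ref{t:reg} then produces a solution with $u\in C([0,T];H^2(\Omega))$, $u_t\in C([0,T];V)$, $u=0$ on $\Gamma_0$, and $u(0)=u_0$; setting $w\equiv u_t$, the pair $(u,w)$ lies in $X_T$ and satisfies $u(0)=u_0$, so $Q_T\neq\emptyset$.

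For completeness, since $X_T$ is a Banach space, it suffices to show that $Q_T$ is closed in $X_T$. The point-evaluation $(u^*,w^*)\mapsto u^*(0)$ is a bounded linear map from $X_T$ into $H^2(\Omega)$, because $\|u^*(0)\|_{H^2(\Omega)}\le\sup_{t\in[0,T]}\|u^*(t)\|_{H^2(\Omega)}\le\|(u^*,w^*)\|_{X_T}$; hence $Q_T$, being the preimage of the closed singleton $\{u_0\}$ under a continuous map, is closed. A closed subset of a complete metric space is complete for the induced metric, and the expression $d_{Q_T}$ displayed in the statement is the square of the restriction to $Q_T$ of the metric coming from $\|\cdot\|_{X_T}$; since squaring alters neither the Cauchy property nor convergence, completeness is unaffected. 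Therefore $(Q_T,d_{Q_T})$ is a complete metric space.

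The only delicate point is the non-emptiness step: one must recognize that membership in $X_0$ supplies precisely the data regularity ($\Delta u_0\in V$, and $F(u_0)\in V$ via Lemma \ref{l:est}(i) together with the Dirichlet condition on $\Gamma_0$) and the compatibility relation needed to invoke the inhomogeneous linear theory of Theorem \ref{t:reg}, and that the frozen source $F(u_0)$ is automatically $H^1$ in time because it is constant. Once a single admissible trajectory is in hand, completeness is routine, resting only on the Banach-space structure of $X_T$ recorded earlier.
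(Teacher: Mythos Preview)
Your proof is correct, but your non-emptiness argument is far more elaborate than necessary. The paper simply observes that the constant-in-time pair $(u_0,0)$---meaning $u^*(t)\equiv u_0$ and $w^*(t)\equiv 0$---lies in $Q_T$: since $(u_0,w_0)\in X_0$ forces $u_0\in H^2(\Omega)\cap V$, the constant map is in $C([0,T];H^2(\Omega)\cap V)$, its time derivative is identically zero, and $u^*(0)=u_0$. There is no need to invoke Theorem~\ref{t:reg} or solve any PDE to exhibit a single element of $Q_T$. Your route does have a small incidental payoff: the element you construct is exactly $K(u_0,w_0)$ (the first iterate of the fixed-point map), so it illustrates that the linear theory already furnishes admissible trajectories---but for the lemma as stated this is overkill.

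Your completeness argument is essentially the same as the paper's, just phrased more abstractly via continuity of the evaluation map $(u^*,w^*)\mapsto u^*(0)$ rather than by passing directly to the limit along a convergent sequence. Your remark that the displayed $d_{Q_T}$ is the square of the norm metric, and that this does not affect Cauchy sequences or limits, is a nice point the paper glosses over.
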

 \begin{proof} $(u_0,0)\in Q_T$, hence $Q_T$ is nonempty.  It is easy to see that $d_{Q_T}$ is a metric.  Now, in order to show completeness of $Q_T$, take $(u_n^*,w_n^*)\in Q_T$ such that $(u_n^*,w_n^*)\rightarrow (u^*,w^*)\in X_T$.  This means $u_n^*(0)=u_0$ and $\displaystyle\lim_{n}\left[\sup_{t\in [0,T]} \|u_n^*-u^*\|^2_{H^2(\Omega)}\right]=0,$  which implies $$0\le \|u^*(0)-u_0\|_{H^2(\Omega)}= \|u^*(0)-u_n^*(0)\|_{H^2(\Omega)}\le \sup_{[0,T]}\|u^*-u_n^*\|_{H^2(\Omega)}.$$  Passing to the limit as $n\rightarrow \infty$, we get $u^*(0)=u_0$.  Namely, $(u^*,w^*)\in Q_T.$  That is $Q_T$ is closed.  But closed subsets of complete spaces are complete.  Thereofore, $Q_T$ is complete since $X_T$ is complete.
 \end{proof}

  Now let $(u_0, w_0) \in X_0.$ We  consider the map $ K(u^\star,w^\star )$ , with $(u^\star, w^\star )\in Q_T  $, which produces  solutions  $(u, w) \in C^1([0,T]; V) \cap C([0, T]; V)
 	   			$ to the respective problems:
 	   			\begin{equation}\label{NLSahat}\begin{cases} u_t-(\lambda+i\alpha)\Delta\,u =F(u^\star), \text { in } \Omega\times \mathbb{R_+},\\
 	   			u=0,  \text { on } \Gamma_0\times \mathbb{R_+},\\
 	   			\dis \frac{\partial u}{\partial \nu} +(\lambda+i\alpha)\Delta\,u-F(u^\star)=0, \text { on } \Gamma_1\times \mathbb{R_+},\\
 	   			u(0)=u_0,  \text { in } \Omega\end{cases}\end{equation}
 	   			and
 	   			\begin{equation}\label{zeqhat}\begin{cases} w_t-(\lambda+i\alpha)\Delta\,w =F_t(u^\star,w^\star), \text { in } \Omega\times \mathbb{R_+},\\
 	   			w=0,  \text { on } \Gamma_0\times \mathbb{R_+},\\
 	   			\dis \frac{\partial w}{\partial \nu} +(\lambda+i\alpha)\Delta\,w-F_t(u^\star,w^\star)=0, \text { on } \Gamma_1\times \mathbb{R_+},\\
 	   			w(0)=w_0,  \text { in } \Omega.\end{cases}\end{equation}

 We would like to give some important remarks at this point.
 \begin{rem}
 \begin{itemize}
   \item[(i)] It is important to note that $u_0$ satisfies the necessary compatibility conditions since $(u^*,w^*)$ is taken from the specially constructed space $Q_T$ introduced in the previous lemma, which enforces the equality $F(u^*(0))=F(u_0)$.
   \item[(ii)] One should also observe that $K$ maps the elements of $Q_T$ again to the elements of $Q_T$.  This follows from the linear theory and the fact that $(u,w)=K(u^*,w^*)$ is continuous on [0,T] so that in particular $u(0)=u_0$, i.e., $(u,w)\in Q_T$.
 \end{itemize}

 \end{rem}
 	   			Componentwise, the map $K(u^\star,w^\star ) $ can be thought as the composition of \\
 	   			\begin{eqnarray*}\begin{cases} K(u^\star,\cdot ):u^\star\longmapsto F(u^\star)\longmapsto u \\
 	   					K(\cdot ,w^\star):w^\star\longmapsto F_t(\cdot,w^\star)\longmapsto w .\end{cases}\end{eqnarray*}
 	   			These component maps can be found explicitly via the ``boundary solver'' introduced in Lemma \ref{l:sem} as follows: \\
 	   			{\small   		\begin{align}\label{Kdef}
 	   				K(u^\star,\cdot )&=u(t)\nonumber\\
 	   				&=\left( e^{At}u_0^*- \ds\int_0^t e^{A(t-s)}A\, {\mathcal{N}}\, F(u^\star(s))\, ds+  \ds\int_0^t e^{A(t-s)}{F}(u^\star(s))\, ds\right) \\
 	   				K(\cdot \nonumber ,w^\star)&=w(t)\\
 	   				&=\,\left( e^{At}w_0^*- \ds\int_0^t e^{A(t-s)}A\, {\mathcal{N}}\, F_t(\cdot, w^\star(s))\, ds + \ds\int_0^t e^{A(t-s)}{F}_t(\cdot,w^\star(s))\, ds\right) .\end{align}} \\

 	   		As before, $A$ is the operator given in \eqref{operator_A} with Wentzell boundary conditions, $\mathcal{N}$ is the Neumann map as seen in (\ref{Neumann}).
 	   		
 	   		{\bf Step 2: The estimates: Invariance of the Ball in $X_T $ }
 	   		
 	   		We recall that $K (u^\star,w^\star ) = (u,w)$ where $u$ satisfies (\ref{NLSahat}) with the right hand side $f \equiv F(u^\star) $ and $w$ satisfies (\ref{zeqhat}) with the right hand side $f\equiv F_t(u^\star,w^\star ) $. Since the initial data satisfies the required compatibility conditions, we are in  a position to apply the estimates of Theorem \ref{t:reg} . This yields:
 	   		\begin{eqnarray}\label{equw1}
 	   		&&\|w \|_{C([0,T]; V)}+ \|u\|_{C([0,T]; H^2(\Om)) }  \\ \notag  &&\leq  C\left(
 	   		\| F(u^\star) \|_{L^2(0,T; V) } + \|F_t(u^\star,w^\star)\|_{L^2(0,T,V)} + \|u_0\|_V + \| \Delta u_0\|_V \right).
 	   		\end{eqnarray}

 	   		We first need to verify that $K(u^\star,w^\star)$ maps $B_R(Q_T)$ into $B_R(Q_T)$, where $B_R(Q_T)$ denotes the closed ball  of radius $R$ in the space $Q_T$. Below we will suitably choose $R$ and $T$.
 	   		To accomplish this, we shall use the estimates in (\ref{equw1}) and Lemma \ref{l:est}.  To this end, let $R>0$ be a fixed number (to be chosen in a moment) such that $(u^*,w^*)\in B_{R}(X_T).$
 	   		
 	
 	   		Making use of estimates \eqref{F(u)H1} and \eqref{F_t_V} given  in Lemma \ref{l:est}, we conclude that
 	   		\begin{equation}
 	   		\label{equw3}
 	   		\begin{aligned}
 	   		&\| F(u^\star) \|_{L^2(0,T; V) } + \|F_t(u^\star,w^\star)\|_{L^2(0,T,V)}\\
 	   		&= \left[\int_0^T \|F(u^\star(t))\|_V^2\,dt\right]^{1/2}+
 	   		\left[\int_0^T \|F_t(u^\star(t), w^\star(t)\|_V^2\,dt\right]^{1/2}\\
 	   		&\leq \left[\int_0^T2\left(1+\|u^\star(t)\|_{H^2(\Om)}^{2(p-1)}\right)\,\left(\|u^\star(t)\|_V^2+\|w^\star(t)\|_V^2\right)\,dt\right]^{1/2}\\
 	   		&\leq C\,T^{1/2}\,\left(1+\|u^\star\|_{C([0,T]; H^2(\Om))}^{p-1}\right)\,\left(\|u^\star\|_{C([0,T]; H^2(\Om))}+\|w^\star\|_{C([0,T]; V)}\right).\\
 &\leq C\,2RT^{1/2}\,\left(1+R^{p-1}\right)\,.
 	   		\end{aligned}
 	   		\end{equation}

 	   		Combining  \eqref{equw1} and \eqref{equw3}, we infer,
 	   		\begin{equation*}
 	   		\begin{aligned}
 	   		\|K(u^\star, w^\star)\|_{X_T}&=\|({u},{w})\|_{X_T}\\
 	   		&=\|w\|_{C([0,T]; V)} + \|u\|_{C([0,T]; H^2(\Om))}\\
 	   		&\le C_{u_0}+C\,2RT^{1/2}\,\left(1+R^{p-1}\right).
 	   		\end{aligned}
 	   		\end{equation*}
 	   		Let $R=2C_{u_0}$.  Then, for small $T$, $K$ maps $B_{R}(Q_T)$ into itself.\\

 	   		{\bf Step 3: Contraction}.
 	   		We will show that for small $T$, there exist  $ 1>\rho >0$  such that
 	   		{\small$$
 	   			\dis \| K(u^\star_1,w^\star_1) - K(u^\star_2, w^\star_2 ) \|_{X_T} \leq \rho\,\|(u^\star_1 -u^\star_2, w^\star_1 -w^\star_2)\|_{X_T},\,\forall\, (u^\star_1, w^\star_1), (u^\star_2, w^\star_2)\in\,B_R(X_T).
 	   			$$}
 	   		Let $(u_1^*,w_1^*),(u_2^*,w_2^*)\in X_T$. Then by arguments similar to those above that, we can obtain
 	   		{\begin{equation}\label{contract1}
 	   			\begin{aligned}
 	   			\| K(u^\star_1,w^\star_1) - K(u^\star_2, w^\star_2 ) \|_{X_T} &= \|(u_1-u_2, w_1-w_2)\|_{X_T}\\
 	   			&=\|u_1-u_2\|_{C([0,T]; H^2(\Om))}+\|w_1-w_2\|_{C([0,T]; V)}\\&\leq\int_0^T\|F(u^\star_1(s))-F(u^\star_2(s))\|_{H^2(\Omega)}\,ds\\
 	   			&+\int_0^T\|F_t(u^\star_1(s), w^\star_1(s))-F_t(u^\star_2(s), w^\star_2(s))\|_{V}\,ds\,.\\
 	   			\end{aligned}
 	   			\end{equation}}
 	   		By using the local Lipschitz estimates for $F$ and $F_t$, taking into account that $ (u^\star_1, w^\star_1), (u^\star_2, w^\star_2)\in\,B_R(X_T), $ one obtains:
 	   		{\small\begin{eqnarray*}
 	   				\|F(u^\star_1) - F(u^\star_2) \|_{H^2(\Om)}\!\!&\leq&\!\! C_1\left(\|u^\star_1\|^{c(p)}_{H^2(\Om)},\, \|u^\star_2\|^{c(p)}_{H^2(\Om)}, |\ga|\right) \|u^\star_1 -u^\star_2\|_{H^2(\Om)}\\\!\!&\leq&\!\! C_1(R)\, \|u^\star_1 -u^\star_2\|_{H^2(\Om)}\,\end{eqnarray*}} and
 	   		\begin{eqnarray*}
 	   			\|F_t(u^\star_1,w^\star_1) - F(u^\star_2,w^\star_2) \|_{V}\!\!&\leq&\!\! C_2\left(\|u^\star_1\|^{c(p)}_{H^2(\Omega)}, \, \|u^\star_2\|^{c(p)}_{H^2(\Omega)},|\ga|\right) \|w^\star_1 -w^\star_2\|_V\\\!\!&\leq&\!\! C_2(R) \, \|w^\star_1 -w^\star_2\|_V.
 	   		\end{eqnarray*}
 	   		Setting $ C_3:=\max\{C_1(R),C_2(R)\}, $ from \eqref{contract1}, we have
 	   		{\small\begin{equation}\label{contract2}
 	   			\begin{aligned}
 	   			&\| K(u^\star_1,w^\star_1) - K(u^\star_2, w^\star_2 ) \|_{X_T}\\
 	   			&\leq C_3\!\!\int_0^T\!\!\left[ \|u^\star_1(s)-u^\star_2(s)\|_{H^2(\Om)}+\|w^\star_1(s)-w^\star_2(s)\|_{V}\right]ds\\
 	   			&\leq T\,C_3 \left[\|u^\star_1-u^\star_2\|_{C([0,T]; H^2(\Om))}+\|w^\star_1-w^\star_2\|_{C([0,T]; V)}\right]\\
 	   			&\le T C_3\, \|(u^\star_1-u^\star_2, w^\star_1-w^\star_2)\|_{X_T}.
 	   			\end{aligned}
 	   			\end{equation}}
 The above estimate proves that $K$ is a contraction if $T$ is sufficiently small.  This proves the local existence and uniqueness of a strong solution in $B_R(Q_T)$.

 \begin{rem}
   The above result says that there is a unique local solution in $B_R(Q_T)$.  This does not mean that there is uniqueness in $Q_T$.  In the case, $\alpha,\beta>0$, one can also prove the uniqueness in $Q_T$.  This directly follows from \eqref{EE0}.
 \end{rem}

\section{Global Well-posedness of Strong Solutions}
	 	 	   		\setcounter{equation}{0} 	   	
In this section, we study the global strong solutions of the CGLE with dynamic boundary conditions with power type nonlinearity under the additional assumption $\beta> 0$. Our proofs for strong solutions will use Sobolev embeddings. Therefore, we will have some restriction on $p$.  In [4], it is proven that the defocusing cubic NLS with dynamic boundary conditions is globally well-posed for $N=2$. Here, we improve this result in the context of the CGLE by proving the well-posedness of global solutions for dimensions $N\le 3$. More precisely, we prove global well-posedness for  $p\ge 2$ if $N=1$; $\displaystyle p\in [2,5]$ if $N=2$; and $\displaystyle p\in \left[2,\frac{11}{3}\right]$ if $N=3$.  The smoothing effect will play a major role in this context. This is a missing ingredient in the case of the nonlinear Schrödinger equations.

	 	 	   		Suppose $ 0<T_{\max}\le\infty $ denotes the maximal time of existence of a given local strong solution. We want to prove that $T_{\max}=\infty$ by showing that $\|(u,u_t)\|_{X_T}$ remain bounded on $ [0,T_{\max})$.  To this end, we first prove the following lemma which is true in any dimension.  Similar estimates were proved, for example, in \cite[Lemmas 3.2 and 5.2]{Bechouche}, where the domain was either the whole space or a torus (periodic boundary conditions).  Here, our aim is to obtain a uniform energy bound for the solutions.

\begin{lem}\label{H1global}
  Let $u_0\in V\cap L^{p+1}(\Omega)$ with $\gamma_0u_0\in L^{p+1}(\Gamma_1)$.  Then,

\begin{enumerate}[(i)]
  \item if $\gamma> 0$, then $E(t)\le E(0)\exp(CT_{max})$ for $t\in [0,T_{max}),$ where

\begin{multline}\label{EtDef}E(t)\equiv \frac{\alpha}{2} \|\nabla u(t)\|_{L^2(\Omega)}^2 +\frac{\beta}{p+1}\|u(t)\|_{L^{p+1}(\Omega)}^{p+1}+\frac{1}{p+1}(\alpha\kappa+\beta\lambda)\|u\|_{L^{p+1}(\Gamma_1)}^{p+1}\\
+\alpha\int_0^t \|u_t(s)\|_{L^2(\Gamma_1)}^2ds+\alpha\lambda\int_0^t\|\Delta u\|_{L^2(\Omega)}^2ds+\kappa\beta\int_0^t\|u(s)\|_{L^{2p}(\Omega)}^{2p}.\end{multline}
  \item if $\gamma\le 0$, then $E(t)\le E(0)$ for $t\in [0,T_{max}),$ where

\begin{multline}\label{EtDef2}E(t)\equiv \frac{\alpha}{2} \|\nabla u(t)\|_{L^2(\Omega)}^2 +\frac{\beta}{p+1}\|u(t)\|_{L^{p+1}(\Omega)}^{p+1} -\frac{\alpha\gamma}{2}\|u(t)\|_{L^2(\Gamma_1)}^2+\frac{1}{p+1}(\alpha\kappa+\beta\lambda)\|u\|_{L^{p+1}(\Gamma_1)}^{p+1}\\
+\alpha\int_0^t \|u_t(s)\|_{L^2(\Gamma_1)}^2ds+\alpha\lambda\int_0^t\|\Delta u\|_{L^2(\Omega)}^2ds+\kappa\beta\int_0^t\|u(s)\|_{L^{2p}(\Omega)}^{2p}.\end{multline}
\end{enumerate}

Moreover, in both cases above, the solutions enjoy the following interior and boundary regularity for all $T>0$:
$$(u,u_t)\in [C([0,T];V\cap L^{p+1}(\Omega))\cap L^2(0,T;H^2(\Omega))\cap L^{2p}(0,T;L^{2p}(\Omega))]\times L^2(0,T;L^2(\Omega)),$$
$$\frac{\partial u}{\partial n}=\gamma_0u_t\in L^2(0,T;L^2(\Gamma_1)), \gamma_0u\in C([0,T];L^{p+1}(\Gamma_1)).$$
\end{lem}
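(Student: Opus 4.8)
The plan is to produce an exact energy identity for $E(t)$ by testing \eqref{GL_equation_1} against the multiplier $\overline{u_t}$, taking imaginary parts, and then re-inserting the equation itself into the leftover lower-order terms; this is the dynamic-boundary analogue of the interior estimates of \cite{Bechouche}. Since $(u,u_t)\in X_T$ we have $u\in C([0,T];H^2(\Omega)\cap V)$, $u_t\in C([0,T];V)$ and, reading $(\lambda+i\alpha)\Delta u=u_t+(\kappa+i\beta)|u|^{p-1}u-\gamma u$ off the equation, $\Delta u\in C([0,T];V)$; hence all integrations by parts below are legitimate, the boundary integrals make sense (the Neumann trace equals $-\gamma_0u_t\in L^2(\Gamma_1)$), and the maps $t\mapsto\|\nabla u(t)\|_{L^2(\Omega)}^2$, $t\mapsto\|u(t)\|_{L^{p+1}(\Omega)}^{p+1}$, $t\mapsto\|\gamma_0u(t)\|_{L^{p+1}(\Gamma_1)}^{p+1}$ are differentiable with the expected derivatives (a difference-quotient/density argument supplies full rigor if desired).

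\emph{Step 1: the energy identity.} Multiply \eqref{GL_equation_1} by $\overline{u_t}$ in $L^2(\Omega)$, integrate by parts using $u=0$ on $\Gamma_0$ and $\partial u/\partial\nu=-u_t$ on $\Gamma_1$, and take imaginary parts. The real parts of the complex-coefficient terms become exact time derivatives, so one is left with $\tfrac\alpha2\tfrac{d}{dt}\|\nabla u\|_{L^2(\Omega)}^2+\tfrac\beta{p+1}\tfrac{d}{dt}\|u\|_{L^{p+1}(\Omega)}^{p+1}+\alpha\|u_t\|_{L^2(\Gamma_1)}^2$ on the left and the three residual terms $-\lambda\,\Im\int_\Omega\nabla u\cdot\nabla\overline{u_t}\,dx$, $-\kappa\,\Im\int_\Omega|u|^{p-1}u\,\overline{u_t}\,dx$, $\gamma\,\Im\int_\Omega u\,\overline{u_t}\,dx$ on the right. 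Into these I substitute $\overline{u_t}=(\lambda-i\alpha)\overline{\Delta u}-(\kappa-i\beta)|u|^{p-1}\bar u+\gamma\bar u$. Two structural cancellations are the crux: the $\Im\int_\Omega\Delta u\,|u|^{p-1}\bar u\,dx$ contributions produced by the $\lambda$- and the $\kappa$-term cancel, and after one further integration by parts the $\gamma$-contributions collapse to $\alpha\gamma\|\nabla u\|_{L^2(\Omega)}^2+\beta\gamma\|u\|_{L^{p+1}(\Omega)}^{p+1}+\tfrac{\alpha\gamma}2\tfrac{d}{dt}\|\gamma_0u\|_{L^2(\Gamma_1)}^2$. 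Using in addition the integration-by-parts identity
$$\Re\int_\Omega\Delta u\,|u|^{p-1}\bar u\,dx=-\int_\Omega\Big(|u|^{p-1}|\nabla u|^2+(p-1)|u|^{p-3}|\Re(\bar u\nabla u)|^2\Big)dx-\frac1{p+1}\frac{d}{dt}\|\gamma_0u\|_{L^{p+1}(\Gamma_1)}^{p+1}$$
(the volume integrand is nonnegative and integrable since $|u|^{p-3}|\Re(\bar u\nabla u)|^2\le|u|^{p-1}|\nabla u|^2$), everything regroups into
$$\frac{d}{dt}E(t)=-(\alpha\kappa+\beta\lambda)\int_\Omega\Big(|u|^{p-1}|\nabla u|^2+(p-1)|u|^{p-3}|\Re(\bar u\nabla u)|^2\Big)dx+\alpha\gamma\|\nabla u(t)\|_{L^2(\Omega)}^2+\beta\gamma\|u(t)\|_{L^{p+1}(\Omega)}^{p+1},$$
where $E$ is the functional \eqref{EtDef2}. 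Note that the dissipative time integrals $\alpha\lambda\int_0^t\|\Delta u\|_{L^2(\Omega)}^2$, $\kappa\beta\int_0^t\|u\|_{L^{2p}(\Omega)}^{2p}$ and the boundary summand $\tfrac{\alpha\kappa+\beta\lambda}{p+1}\|\gamma_0u\|_{L^{p+1}(\Gamma_1)}^{p+1}$ of $E$ all arise exactly from this re-insertion of the equation.

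\emph{Step 2: the bound.} Since $\alpha,\beta,\kappa,\lambda>0$, the first term on the right is nonpositive, so $\tfrac{d}{dt}E(t)\le\alpha\gamma\|\nabla u\|_{L^2(\Omega)}^2+\beta\gamma\|u\|_{L^{p+1}(\Omega)}^{p+1}$. If $\gamma\le0$, the right side is $\le0$, hence $E$ is nonincreasing and $E(t)\le E(0)$; as every summand of \eqref{EtDef2} is nonnegative when $\gamma\le0$, this proves (ii). If $\gamma>0$, pass to \eqref{EtDef} (it equals \eqref{EtDef2} plus $\tfrac{\alpha\gamma}2\|\gamma_0u\|_{L^2(\Gamma_1)}^2$, and all its summands are nonnegative); the extra term $\tfrac{\alpha\gamma}2\tfrac{d}{dt}\|\gamma_0u\|_{L^2(\Gamma_1)}^2$ that appears on the right is estimated by Young's inequality, the resulting $\|u_t\|_{L^2(\Gamma_1)}^2$ being absorbed into the dissipative $\alpha\|u_t\|_{L^2(\Gamma_1)}^2$ already present, while $\|\gamma_0u\|_{L^2(\Gamma_1)}^2\le C\|\nabla u\|_{L^2(\Omega)}^2$ follows from the trace inequality together with Poincaré's inequality (valid since $\Gamma_0\neq\emptyset$). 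This produces $\tfrac{d}{dt}E(t)\le C\,E(t)$, and Gr\"onwall's inequality gives $E(t)\le E(0)e^{CT_{\max}}$ on $[0,T_{\max})$, i.e.\ (i).

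\emph{Step 3: regularity, and the main difficulty.} In either case $E$ is bounded on $[0,T]$ for each $T<T_{\max}$ and all its summands are nonnegative, so reading them off yields $u\in L^\infty(0,T;V\cap L^{p+1}(\Omega))$, $\partial u/\partial\nu=\gamma_0u_t\in L^2(0,T;L^2(\Gamma_1))$, $\Delta u\in L^2(0,T;L^2(\Omega))$ --- whence $u\in L^2(0,T;H^2(\Omega))$ by elliptic regularity with the mixed boundary data --- and $u\in L^{2p}(0,T;L^{2p}(\Omega))$; $u_t\in L^2(0,T;L^2(\Omega))$ is immediate from $u_t\in C([0,T];V)$, and the stated time continuity of $u$ and of $\gamma_0u$ follows from the local strong-solution regularity $u\in C([0,T];H^2(\Omega))$ (which embeds into these spaces for $N\le3$); once global existence is in hand (Theorem \ref{GlobalThe}), all of this holds for every $T>0$. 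The genuine difficulty is Step 1: finding the unique way of using the equation (three times, or equivalently testing against $\overline{u_t}$, $\bar u$ and $\overline{\Delta u}$ and keeping the correct real/imaginary parts) so that every indefinite-sign cross term cancels --- this is what simultaneously forces the precise shape of $E$, including the boundary summands $\tfrac{\alpha\kappa+\beta\lambda}{p+1}\|\gamma_0u\|_{L^{p+1}(\Gamma_1)}^{p+1}$ and (for $\gamma\le0$) $-\tfrac{\alpha\gamma}2\|\gamma_0u\|_{L^2(\Gamma_1)}^2$, and produces the dissipative time integrals; the remaining steps are routine given the identity, modulo the technical justification of differentiating the $L^{p+1}$-type norms for strong solutions.
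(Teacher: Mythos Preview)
Your proof is correct and follows essentially the same route as the paper: the paper tests against the multiplier $-\alpha\Delta u+\beta|u|^{p-1}u$ and takes real parts, which (via the equation) is exactly your ``multiply by $\overline{u_t}$ and take imaginary parts, then re-insert the equation into the residuals.'' Your presentation is slightly cleaner in that you arrive at an exact differential identity for $E(t)$ before passing to inequalities, whereas the paper works with inequalities throughout, but the computations, the cancellations, the treatment of the boundary terms, the Young/trace/Poincar\'e estimate for $\gamma>0$, and the Gr\"onwall conclusion are all the same.
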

\begin{proof}
 We start by taking the real part of the scalar product of $-\alpha\Delta u+\beta |u|^{p-1}u$ and $u_t$:
\begin{multline}\label{73}
  Re(-\alpha\Delta u+\beta |u|^{p-1}u,u_t) = Re(-\alpha\Delta u+\beta |u|^{p-1}u,(\lambda+i\alpha)\triangle u-(\kappa+i\beta)|u|^{p-1}u+\gamma u)\\
  =-\alpha\lambda\|\Delta u\|_{L^2(\Omega)}^2+\alpha\gamma \|\nabla u\|_{L^2(\Omega)}^2-\alpha\gamma Re(\frac{\partial u}{\partial n},u)_{L^2(\Gamma_1)}-\kappa\beta\|u\|_{L^{2p}(\Omega)}^{2p}+\beta\gamma\|u\|_{L^{p+1}(\Omega)}^{p+1}\\
  +\alpha Re(\Delta u,(\kappa+i\beta)|u|^{p-1}u)+\beta Re(|u|^{p-1}u,(\lambda+i\alpha)\triangle u)\\
  =-\alpha\lambda\|\Delta u\|_{L^2(\Omega)}^2+\alpha\gamma \|\nabla u\|_{L^2(\Omega)}^2-\alpha\gamma Re(\frac{\partial u}{\partial n},u)_{L^2(\Gamma_1)}-\kappa\beta\|u\|_{L^{2p}(\Omega)}^{2p}+\beta\gamma\|u\|_{L^{p+1}(\Omega)}^{p+1}\\
  -\alpha Re(\nabla u,(\kappa+i\beta)\nabla\left(|u|^{p-1}u\right))+\alpha Re(\frac{\partial u}{\partial n},(\kappa+i\beta)|u|^{p-1}u)_{L^2(\Gamma_1)}\\
  -\beta Re(\nabla(|u|^{p-1}u),(\lambda+i\alpha)\nabla u)+\beta Re(|u|^{p-1}u,(\lambda+i\alpha)\frac{\partial u}{\partial n})_{L^2(\Gamma_1)}.
\end{multline}
The first boundary term at the right hand side of \eqref{73} can be estimated by

\begin{multline}
  \left|-\alpha\gamma Re(\frac{\partial u}{\partial n},u)_{L^2(\Gamma_1)}\right|\le \frac{\alpha\gamma}{2}\left(C_\epsilon\|u\|_{L^2(\Gamma_1)}^2+\epsilon\left\|\frac{\partial u}{\partial n}\right\|_{L^{2}(\Gamma_1)}^{2}\right)\\
  \le \frac{\alpha\gamma}{2}\left(C_\epsilon\|\nabla u\|_{L^2(\Omega)}^2+\epsilon\left\|u_t\right\|_{L^{2}(\Gamma_1)}^{2}\right).
\end{multline}

Alternatively, by using $\displaystyle \frac{\partial u}{\partial \nu}|_{\Gamma_1}=-u_t$ one also has

\begin{equation}\label{alternative}
 -\alpha\gamma Re(\frac{\partial u}{\partial n},u)_{L^2(\Gamma_1)}=\frac{\alpha\gamma}{2}\frac{d}{dt}\|u\|_{L^2(\Gamma_1)}^2.
\end{equation}


Under the assumption $\beta>0$, we have $\alpha\kappa+\beta\lambda>0$ and

\begin{multline}
  -\alpha Re(\nabla u,(\kappa+i\beta)\nabla\left(|u|^{p-1}u\right))-\beta Re(\nabla(|u|^{p-1}u),(\lambda+i\alpha)\nabla u)\\
  =-(\alpha\kappa+\beta\lambda)Re(\nabla(|u|^{p-1}u),\nabla u)
  = -(\alpha\kappa+\beta\lambda) Re(\nabla u,(\frac{p+1}{2}|u|^{p-1}\nabla u+\frac{p-1}{2}|u|^{p-3}u^2\nabla \bar{u}))\\
  =-(\alpha\kappa+\beta\lambda)\left[\frac{p+1}{2}\int_\Omega|u|^{p-1}|\nabla u|^2dx+\frac{p-1}{2} Re\int_\Omega|u|^{p-3}\bar{u}^2(\nabla{u})^2dx\right]\le 0.
\end{multline}

The last term at the right hand side of \eqref{73} is calculated as follows:

\begin{multline}
  \alpha Re(\frac{\partial u}{\partial n},(\kappa+i\beta)|u|^{p-1}u)_{L^2(\Gamma_1)} +\beta Re(|u|^{p-1}u,(\lambda+i\alpha)\frac{\partial u}{\partial n})_{L^2(\Gamma_1)}\\
  =(\alpha\kappa+\beta\lambda)Re(|u|^{p-1}u,\frac{\partial u}{\partial n})_{L^2(\Gamma_1)}
  =-\frac{1}{(p+1)}(\alpha\kappa+\beta\lambda)\frac{d}{dt}\|u\|_{L^{p+1}(\Gamma_1)}^{p+1}.
\end{multline}

On the other hand, by using the main equation, we can rewrite the same scalar product in \eqref{73} as

\begin{multline}
  Re(-\alpha\Delta u+\beta |u|^{p-1}u,u_t)=\frac{\alpha}{2} \frac{d}{dt}\|\nabla u\|_{L^2(\Omega)}^2 +\frac{\beta}{p+1}\frac{d}{dt}\|u\|_{L^{p+1}(\Omega)}^{p+1}-\alpha Re(\frac{\partial u}{\partial n},u_t)_{L^2(\Gamma_1)}\\
  =\frac{\alpha}{2} \frac{d}{dt}\|\nabla u\|_{L^2(\Omega)}^2 +\frac{\beta}{p+1}\frac{d}{dt}\|u\|_{L^{p+1}(\Omega)}^{p+1}+\alpha \|u_t\|_{L^2(\Gamma_1)}^2.
\end{multline}
  If $\gamma\ge 0$, it follows that
\begin{multline}\label{energyEst}
 E(t)-E(0)\le \alpha\gamma(1+\frac{C_\epsilon}{2})\int_0^t\|\nabla u(s)\|_{L^2(\Omega)}^2ds+\beta\gamma\int_0^t\|u(s)\|_{L^{p+1}(\Omega)}^{p+1}
  +\epsilon\frac{\alpha\gamma}{2}\int_0^t\|u_t(s)\|_{L^{2}(\Gamma_1)}^{2}ds\\
  \le \epsilon\frac{\gamma}{2}E(t) + C\int_0^tE(s)ds.
\end{multline}

where $\epsilon>0$ small and fixed.

Therefore,

$$E(t)\le CE(0)+C\int_0^tE(s)ds.$$

Now, using Gronwall's inequality, we conclude that \begin{equation}\label{EE0}E(t)\le E(0)\exp(CT_{max})\end{equation} for $t\in [0,T_{max})$.

Now, we consider the case $\gamma<0$ and define $E(t)$ as in \eqref{EtDef2}.  Bu utilizing the alternative calculation in \eqref{alternative}, we get

\begin{equation}\label{energyEst2}
 E(t)-E(0)\le \frac{\alpha\gamma}{2}\int_0^t\|\nabla u(s)\|_{L^2(\Omega)}^2ds+\beta\gamma\int_0^t\|u(s)\|_{L^{p+1}(\Omega)}^{p+1}
  \le 0.
\end{equation}  This gives $E(t)\le E(0)$ when $\ga<0.$

Hence, we proved that the corresponding solution $(u,u_t)$ is global in the sense that, for all $T>0$, $$(u,u_t)\in \left[C([0,T];V\cap L^{p+1}(\Omega))\cap L^2(0,T;H^2(\Omega))\cap L^{2p}(0,T; L^{2p}(\Omega))\right]\times L^2(0,T;L^2(\Omega))$$  and
$$\gamma_0u\in C([0,T];L^{p+1}(\Gamma_1)), \frac{\partial u}{\partial n}=\gamma_0u_t\in L^2(0,T;L^2(\Gamma_1)).$$
\end{proof}

\begin{lem}\label{gagliardo} Let $u$ be a local solution.  Then,
\begin{enumerate}
  \item If $N=1$ and $1< p<\infty$, then $\|F(u)\|_V\le C.$
  \item If $N=2$ and $1< p<\infty$, then $\|F(u)\|_V\le C+C\|u\|_{H^2(\Omega)}^{\theta}$ where $1>\theta>0$ can be chosen as small as we wish.
  \item If $N\ge 3$ and $1<p<\frac{N}{N-2}$, then $\|F(u)\|_V\le C+C\|u\|_{H^2(\Omega)}^{\theta}$ where $1>\theta>0$ can be chosen as small as we wish.
  \item If $N=3$ and $p\ge 3$, then $\displaystyle\|F(u)\|_V\le C+C\|u\|_{H^2(\Omega)}^{\frac{3(p-1)}{4}}$.  If in addition $p<\frac{11}{3}$, then $\|F(u)\|_V\le C_\epsilon+\epsilon\|u\|_{H^2(\Omega)}^{2}$ for any fixed small $\epsilon>0.$

\end{enumerate}

\end{lem}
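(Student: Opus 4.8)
The plan is to reduce all four statements to a single interpolation estimate for $\big\||u|^{p-1}\nabla u\big\|_{L^2(\Om)}$ and then to insert the a priori bounds provided by Lemma \ref{H1global}. Recall from the proof of Lemma \ref{l:est} the pointwise bound $|\nabla F(u)|\le C\big(|u|^{p-1}|\nabla u|+|\nabla u|\big)$; since $u=0$ on $\Ga_0$ we have $F(u)\in V$, and because $\|\cdot\|_V=\|\nabla\cdot\|_{L^2(\Om)}$ this gives
\begin{equation*}
\|F(u)\|_V\le C\Big(\big\||u|^{p-1}\nabla u\big\|_{L^2(\Om)}+\|\nabla u\|_{L^2(\Om)}\Big).
\end{equation*}
As $u$ is a local solution, Lemma \ref{H1global} yields, on the maximal interval of existence, $\|\nabla u(t)\|_{L^2(\Om)}\le C$ and $\|u(t)\|_{L^{p+1}(\Om)}\le C$, hence (by Poincar\'e, since $u\in V$) $\|u(t)\|_{H^1(\Om)}\le C$, with $C$ depending only on $E(0)$. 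So the linear term is harmless and it remains to control $\big\||u|^{p-1}\nabla u\big\|_{L^2(\Om)}$ by powers of $\|u\|_{H^2(\Om)}$. In dimension $N=1$ this is immediate, since $H^1(\Om)\hookrightarrow L^\infty(\Om)$ gives $\big\||u|^{p-1}\nabla u\big\|_{L^2(\Om)}\le\|u\|_{L^\infty(\Om)}^{p-1}\|\nabla u\|_{L^2(\Om)}\le C$, which is part (1).

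For parts (2) and (3) I would use H\"older's inequality in the form
\begin{equation*}
\big\||u|^{p-1}\nabla u\big\|_{L^2(\Om)}\le\|u\|_{L^{a(p-1)}(\Om)}^{\,p-1}\,\|\nabla u\|_{L^{b}(\Om)},\qquad \tfrac1a+\tfrac1b=\tfrac12,
\end{equation*}
then apply a Gagliardo--Nirenberg inequality $\|\nabla u\|_{L^{b}(\Om)}\le C\|u\|_{H^2(\Om)}^{\theta}\|u\|_{V}^{1-\theta}$ for $b>2$ (of the type already used in Lemma \ref{l:est}) and a Sobolev embedding of $H^1(\Om)$ to handle the first factor. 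When $N=2$, the embedding $H^1(\Om)\hookrightarrow L^q(\Om)$ is available for every finite $q$, so $b$ may be taken arbitrarily close to $2$ --- hence $\theta>0$ arbitrarily small --- while keeping $a(p-1)$ finite, and then $\|u\|_{L^{a(p-1)}(\Om)}\le C\|u\|_{H^1(\Om)}\le C$; this proves $\|F(u)\|_V\le C+C\|u\|_{H^2(\Om)}^{\theta}$. When $N\ge3$, the strict subcriticality $p-1<\tfrac{2}{N-2}$ is exactly what allows one to choose $b>2$ with $a(p-1)\le\tfrac{2N}{N-2}$ simultaneously, so again $\|u\|_{L^{a(p-1)}(\Om)}\le C\|u\|_{H^1(\Om)}\le C$ and one gets $\|F(u)\|_V\le C+C\|u\|_{H^2(\Om)}^{\theta}$ for some $\theta\in(0,1)$ --- equivalently, the scaling-sharp interpolation $\big\||u|^{p-1}\nabla u\big\|_{L^2(\Om)}\le C\|u\|_{H^2(\Om)}^{\theta_*}\|u\|_{H^1(\Om)}^{\,p-\theta_*}$ with $\theta_*=\tfrac{(p-1)(N-2)}{2}$, which is $<1$ precisely under the hypothesis on $p$.

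For part (4), where $N=3$ and $p\ge3$, the exponent $6(p-1)$ exceeds the Sobolev range of $H^1(\Om)$, so I would instead invoke $H^2(\Om)\hookrightarrow L^\infty(\Om)$ together with the Gagliardo--Nirenberg inequality $\|u\|_{L^\infty(\Om)}\le C\|u\|_{H^2(\Om)}^{3/4}\|u\|_{L^2(\Om)}^{1/4}$ recorded in Lemma \ref{l:est}. This gives
\begin{equation*}
\big\||u|^{p-1}\nabla u\big\|_{L^2(\Om)}\le\|u\|_{L^\infty(\Om)}^{p-1}\|\nabla u\|_{L^2(\Om)}\le C\|u\|_{H^2(\Om)}^{\frac{3(p-1)}{4}}\|u\|_{L^2(\Om)}^{\frac{p-1}{4}}\le C\|u\|_{H^2(\Om)}^{\frac{3(p-1)}{4}},
\end{equation*}
using $\|u\|_{L^2(\Om)}\le C$; this is the first assertion of (4). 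If in addition $p<\tfrac{11}{3}$, then $\tfrac{3(p-1)}{4}<2$, and Young's inequality upgrades the bound to $C_\epsilon+\epsilon\|u\|_{H^2(\Om)}^{2}$ for any fixed $\epsilon>0$. The step I expect to demand the most care is part (3): one must verify that the H\"older exponents $a,b$ and the Gagliardo--Nirenberg parameter $\theta$ can be chosen mutually admissible, i.e. that the strict inequality $p<\tfrac{N}{N-2}$ is precisely the condition leaving a usable window of exponents (and forcing $\theta<1$, hence $\theta<2$, which is what the later global well-posedness argument needs). Everything else reduces to routine manipulation of standard Sobolev embeddings and the energy bound of Lemma \ref{H1global}.
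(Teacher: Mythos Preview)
Your proposal is correct and follows essentially the same route as the paper: the pointwise bound $|\nabla F(u)|\le C(|u|^{p-1}|\nabla u|+|\nabla u|)$, H\"older splitting, the Gagliardo--Nirenberg inequality for $\|\nabla u\|_{L^s}$ (resp.\ $\|u\|_{L^\infty}$ when $N=3$, $p\ge 3$), the Sobolev embeddings of $H^1(\Omega)$, and the a priori $H^1$-bound from Lemma \ref{H1global}. Your observation in part (3) that the optimal interpolation exponent is $\theta_*=\tfrac{(p-1)(N-2)}{2}<1$ rather than ``arbitrarily small'' is in fact sharper than the statement and matches what the paper's proof actually establishes; this is harmless since the subsequent application only needs $\theta<2$.
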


\begin{proof} Note that $$\nabla F(u) =   -(\kappa+i\beta)\left\{{\frac{(p+1)}{2}}|u|^{p-1}\nabla u+{\frac{(p-1)}{2}}|u|^{p-3}u^2\nabla\bar{u}\,\right\}+\gamma\,\nabla u.$$  Therefore,
$$|\nabla F(u)|\le C(|u|^{p-1}|\nabla u|+|\nabla u|).$$ The case  $N=1$ follows from the Sobolev embedding $H^1(\Omega)\hookrightarrow L^\infty(\Omega)$ and Lemma \ref{H1global}.

Let $N=2$ and $1<p<\infty$.  Then, by using the Gagliardo-Nirenberg inequality \begin{equation}\label{GagN2}\|\nabla u\|_{L^s(\Omega)}\le\|u\|_{H^2(\Omega)}^{\theta}\|u\|_V^{1-\theta},\end{equation}  where $\displaystyle s>2$ and $0<\displaystyle\theta=1-\frac{2}{s}<1,$ the Sobolev embedding $H^1(\Omega)\hookrightarrow L^q(\Omega) $ ($q\ge 1)$, and Lemma \ref{H1global}, we have \begin{equation}\label{713}\|F(u)\|_V\le C\left(\|u\|_{q}^{(p-1)}\|\nabla u\|_s+\|\nabla u\|_{L^2(\Omega)}\right)\\
\le C\left(\|u\|_{V}^{(p-1)}\|u\|_V^{1-\theta}\|u\|_{H^2(\Omega)}^{\theta}+\|u\|_V\right)\end{equation} with $q>2(p-1)$ and $q\ge 1$.

 Let $N\ge 3$ and $1<p<\frac{N}{N-2}$; then we have the following Gagliardo-Nirenberg inequality:
\begin{equation}
  \|\nabla u\|_{L^s(\Omega)}\le \|u\|_{H^2(\Omega)}^{\theta}\|u\|_V^{1-\theta}
\end{equation} where $\displaystyle \frac{2N}{N-2}>s=\frac{4N}{2N-2(N-2)(p-1)}>2$ and $0<\displaystyle\theta=\frac{(s-2)N}{2s}<1.$

For $q=\frac{2N}{N-2}$, \eqref{713} can be rewritten as

\begin{multline}
\|F(u)\|_V\le C\left(\||u|^{p-1}|\nabla u|\|_{L^2(\Omega)}+|\nabla u|\|_{L^2(\Omega)}\right)\le C\left(\|u\|_{q}^{(p-1)}\|\nabla u\|_s+\|u\|_V\right)\\
\le C\left(\|u\|_{V}^{(p-1)}\|u\|_{H^2(\Omega)}^{\theta}\|u\|_V^{1-\theta}+\|u\|_V\right).
\end{multline}

Let $N=3$ and $p\ge 3$, then we have the Gagliardo-Nirenberg inequality
$$\|u\|_{\infty}\le C\|u\|_{H^2(\Omega)}^{\frac{3}{4}}\|u\|_{2}^{\frac{1}{4}}.$$  Therefore,

\begin{equation}\|F(u)\|_V\le C\left(\|u\|_{\infty}^{p-1}\|\nabla u\|_{L^2(\Omega)}+\|\nabla u\|_{L^2(\Omega)}\right)\\
\le C\left(\|u\|_{H^2(\Omega)}^{\frac{3}{4}(p-1)}\|u\|_{L^2(\Omega)}^{\frac{p-1}{4}}\|\nabla u\|_{2}+\|u\|_V\right).
\end{equation}
Note that if in addition, $p<\frac{11}{3}$, then $\frac{3}{4}(p-1)<2.$
\end{proof}

Now,   we will prove the following estimate:
	 	 	   		{\small  \begin{align}
	 	 	   			\|u_t(t) \|_V + \|u(t)\|_{H^2(\Om)}  \leq C({u_0},T_{\max}) &+ C\left(\int_0^t  \|F_t(u(s),u_t(s)) \|_V ds \right.\nonumber\\
	 	 	   			&+\left.  \Big[\int_0^t\!\! \int_{\Gamma_1} \!\!|F_t(u(s),u_t(s))|^2 d\Gamma\,ds\Big]^{1/2} \right)\label{es}
	 	 	   			\end{align}} where the constants  $ C({u_0},T_{\max})$ and $ C $ are given more precisely later.\\
	 	 	   		
	 	 	   		Substituting $ f=F(u) $ in \eqref{LSf},  using $ g=-\left.f\right|_{\Ga_1} $ and \eqref{comp_initial}, \eqref{yt} can be written as
	 	 	   		{\small \begin{equation}
	 	 	   			\label{yt1}u_t=(\lambda+i\alpha)e^{A\,t}\triangle\,u_0+ e^{A\,t}F(u_0)+\int_0^t\! e^{A\,(t-s)}{F}_t(u(s), u_t(s))ds-A\!\int_0^t\! e^{A\,(t-s)}\mathcal{N}F_t(u(s), u_t(s))ds.
	 	 	   			\end{equation}}
	 	 	   		On the other hand, employing the Lemma \ref{gagliardo}, it follows that
	 	 	   		{ \begin{equation}
	 	 	   			\label{F0}\begin{aligned}
	 	 	   			\|e^{A\,t}F(u_0)\|_V&\leq C(\|u_0\|_{H^2(\Omega)}).
	 	 	   			\end{aligned}
	 	 	   			\end{equation}}
	 	 	   		
	 	 	   		Then, combining (\ref{yt1}), \eqref{F0}, and the continuity of the  map  $ \mathcal{L} $ given in  \eqref{Lmap} (whose bound, say is $ \eta $), we obtain
	 	 	   		{\small    \begin{equation}
	 	 	   			\label{yt2}\begin{aligned}
	 	 	   			\|u_t(t)\|_{V}&\leq \underbrace{C\left(\|\triangle\,u_0\|_V,\|u_0\|_{H^2(\Om)}\right)}_{:=C_{u_0}}+\int_0^t\!\! \|{F}_t(u(s), u_t(s))\|_Vds+ \|\mathcal{L}(F_t(u(s), u_t(s)))\|_{C([0,T];V)}\\
	 	 	   			&\leq C_{u_0}+\int_0^t\!\! \|{F}_t(u(s), u_t(s))\|_V\,ds+ \eta\|F_t(u(s), u_t(s))\|_{L^2(0,t; L^2(\Ga_1))}\\
	 	 	   			&=C_{u_0}+\int_0^t\!\! \|{F}_t(u(s), u_t(s))\|_V\,ds+ \eta\left[\int_0^t\|F_t(u(s), u_t(s))\|_{L^2(\Ga_1)}^2\,ds\right]^{1/2}\\
	 	 	   			&\leq C_{u_0}+ \,\int_0^t\!\! \|{F}_t(u(s), u_t(s))\|_V\,ds
	 	 	   			+ \eta\left[\int_0^t\|F_t(u(s), u_t(s))\|_{L^2(\Ga_1)}^2\,ds\right]^{1/2}.
	 	 	   			\end{aligned}
	 	 	   			\end{equation}}
	 	 	   		Now, from \eqref{remH2space} and the continuity of the trace map $ \ga: H^1(\Om) \rightarrow H^{1/2}(\Ga_1) $ (whose  embedding constant, say $ \varsigma $), we have
	 	 	   		\begin{equation}
	 	 	   		\begin{aligned}
	 	 	   		\label{H2est}\|u(t)\|_{H^2(\Om)}&\leq C_1\left(\|u_t(t)\|_{V}+\|F(u(t))\|_{V}+\|F(u(t))\|_{H^{1/2}(\Ga_1)}\right)\\
	 	 	   		&\leq C_1\left(\|u_t(t)\|_{V}+(1+\varsigma)\|F(u(t))\|_{V}\right).
	 	 	   		\end{aligned}
	 	 	   		\end{equation}
Thus, combining Lemma \ref{gagliardo}, \eqref{yt2} and \eqref{H2est}, \eqref{es} follows.

	 	 	   		Now, we employ an inequality derived from the Gagliardo - Nirenberg inequality (see \cite{Bechouche}):
	 	 	   		\begin{align}\label{H2}
	 	 	   		\|z(t)\|_{H^2(\Om)} &\leq {C}\,\left(\|z\|_{L^2(\Om)}+\|\triangle\,z\|_{L^2(\Om)}\right)\\
	 	 	   		&\leq\nonumber \widetilde{C}\,\left(\|z\|_{V}+\|\triangle\,z\|_{L^2(\Om)}\right),\,\forall\,z\in\,H^2(\Om)\,.
	 	 	   		\end{align}
	 	 	   		On the other hand, from Lemma \ref{l:est} and \eqref{H2}, we obtain:
	 	 	   		\begin{equation}\begin{aligned}
	 	 	   		\label{integral1}\int_0^t\! \|{F}_t(u(s), u_t(s))\|_V\,ds &\leq C\,\int_0^t\!\|w(s)\|_V\left(\|u(s)\|_{H^2(\Om)}^2+1\right)\,ds.
	 	 	   		\end{aligned}
	 	 	   		\end{equation}

	 	 	   		Having in mind that $ \widetilde{\gamma} $  denotes a trace operator -restriction to the boundary $ \Ga_1, $ a mixed trace-interpolation inequality results
	 	 	   		\begin{equation}
	 	 	   		\label{trace}
	 	 	   		\|\widetilde{\gamma} u\|^2_{L^2(\Gamma_1)} \leq C \|u\|_{H^1(\Omega) } \|u\|_{L^2(\Omega) } \,.
	 	 	   		\end{equation}
	 	 	   		This implies
	 	 	   		\begin{equation}\label{Ft1}
	 	 	   		\|\widetilde{\gamma} F_t(u,w) \|^2_{L^2(\Gamma_1)} \leq C \|F_t(u,w)\|_{H^1(\Omega) } \|F_t(u,w)\|_{L^2(\Omega) }\,.
	 	 	   		\end{equation}

	 	 	   		Recall that for $N=1$, we have $H^1(\Omega)\hookrightarrow L^{\infty}(\Omega)$; for $N=2$, $H^1(\Omega)\hookrightarrow L^{q}(\Omega)$ if $1\le q<\infty$; for $N\ge 3$, $H^1(\Omega)\hookrightarrow L^{q}(\Omega)$ if $1\le q\le\frac{2N}{N-2}$.  Using these Sobolev embeddings and Lemma \ref{H1global}, for $p\le \frac{N}{N-2}$ if $N\ge 3$ we get:

	 	 	   	 	\begin{equation}
	 	 	   		\label{est_L2}\|F_t(u,w)\|_{L^2(\Om)}\leq C\left(\|\,|u|^{p-1}\,w\|_{L^2(\Om)}+\|w\|_{L^2(\Omega)}\right)\leq CE(0)\|w\|_V.
	 	 	   		\end{equation}
\begin{lem} Let $\alpha,\beta,\lambda,\kappa>0$.  Then, there exists $M>0$ such that
  $$\sup_{t\in [0,T_{max})}\|w(t)\|_{V}<M<\infty.$$
\end{lem}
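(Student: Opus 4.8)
The plan is to close a Gronwall inequality for $\phi(t):=\|w(t)\|_V$ (with $w=u_t$), starting from the representation estimate \eqref{es}, which already isolates the two quantities to be controlled: $\int_0^t\|F_t(u,u_t)\|_V\,ds$ and $\big(\int_0^t\int_{\Gamma_1}|F_t(u,u_t)|^2\,d\Gamma\,ds\big)^{1/2}$. Since the global existence theorem is obtained by contradiction, it suffices to bound $\phi$ on $[0,T_{\max})$ when $T_{\max}<\infty$, so all constants below may depend on $u_0$ and $T_{\max}$. The decisive preliminary observation --- and the point at which the parabolic smoothing $\lambda>0$ enters --- is that the energy identity of Lemma \ref{H1global} yields not only the uniform bound $\|u(t)\|_V^2\le\tfrac{2}{\alpha}E(t)\le C$, but also the \emph{time--integrated} control $\alpha\lambda\int_0^{t}\|\Delta u(s)\|_{L^2(\Om)}^2\,ds\le E(t)\le C$ and the boundary control $\alpha\int_0^t\|u_t(s)\|_{L^2(\Gamma_1)}^2\,ds\le E(t)\le C$. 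Combining the first of these with \eqref{H2} shows that
\begin{equation*}
 a(s):=1+\|u(s)\|_{H^2(\Om)}^2\in L^1(0,T_{\max}),\qquad \|a\|_{L^1(0,T_{\max})}\le C(u_0,T_{\max}).
\end{equation*}

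First I would estimate the interior term. Using Lemma \ref{l:est} in the form appropriate to the dimension --- part (iv) for $N=1$ and the refined parts (v), (vi) for $N=2,3$, in which the exponent of $\|u\|_{H^2(\Om)}$ is $2$ rather than the naive $p-1$ of \eqref{F_t_V} --- together with the uniform bound on $\|u\|_V$ (which absorbs the lower order factors $\|u\|_V^\tau$), one obtains $\|F_t(u(s),u_t(s))\|_V\le C\,\phi(s)\,a(s)$, hence $\int_0^t\|F_t(u(s),u_t(s))\|_V\,ds\le C\int_0^t\phi(s)a(s)\,ds$ (this is \eqref{integral1}). For the boundary term I would use the mixed trace--interpolation inequality \eqref{Ft1}, i.e.\ $\|F_t\|_{L^2(\Gamma_1)}^2\le C\|F_t\|_{H^1(\Om)}\|F_t\|_{L^2(\Om)}$, and bound $\|F_t\|_{H^1(\Om)}\le\|F_t\|_{L^2(\Om)}+\|F_t\|_V\le C\phi\,a$ as above (here $F_t$ vanishes on $\Gamma_0$, so $\|F_t\|_V=\|\nabla F_t\|_{L^2(\Om)}$) and $\|F_t\|_{L^2(\Om)}\le CE(0)\phi$ by \eqref{est_L2}; this gives $\|F_t(u(s),u_t(s))\|_{L^2(\Gamma_1)}^2\le C\phi(s)^2 a(s)$ and therefore
\begin{equation*}
 \Big[\int_0^t\!\!\int_{\Gamma_1}|F_t(u(s),u_t(s))|^2\,d\Gamma\,ds\Big]^{1/2}\le C\Big[\int_0^t\phi(s)^2 a(s)\,ds\Big]^{1/2}.
\end{equation*}

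Substituting into \eqref{es} yields
\begin{equation*}
 \phi(t)\le C+C\int_0^t\phi(s)a(s)\,ds+C\Big[\int_0^t\phi(s)^2 a(s)\,ds\Big]^{1/2}.
\end{equation*}
Squaring, using $(x+y+z)^2\le 3(x^2+y^2+z^2)$ and the Cauchy--Schwarz inequality with respect to the finite measure $a(s)\,ds$ (so that $(\int_0^t\phi a)^2\le\|a\|_{L^1(0,t)}\int_0^t\phi^2 a$), one obtains $\phi(t)^2\le C+C(1+\|a\|_{L^1(0,T_{\max})})\int_0^t\phi(s)^2 a(s)\,ds$, and Gronwall's inequality in integral form (the kernel $a$ being in $L^1$) gives $\phi(t)^2\le C\exp\big(C\|a\|_{L^1(0,T_{\max})}\big)=:M^2<\infty$ for all $t\in[0,T_{\max})$, which is the claim. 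Once this bound is in hand, \eqref{H2est} together with Lemma \ref{gagliardo} (and the usual Young absorption of the $\|u\|_{H^2(\Om)}^\theta$ term) also bounds $\sup_{[0,T_{\max})}\|u(t)\|_{H^2(\Om)}$, so that $\|(u,u_t)\|_{X_T}$ stays bounded up to $T_{\max}$; by Theorem \ref{T1} this is incompatible with $T_{\max}<\infty$, which is how Theorem \ref{GlobalThe} is concluded.

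I expect the main obstacle to lie in the borderline range of exponents --- $p$ close to $5$ when $N=2$, and $p$ close to $\tfrac{11}{3}$ when $N=3$ --- where the Sobolev embeddings used above to estimate $F_t$ on the boundary are saturated and \eqref{est_L2} is no longer directly available: there one must instead interpolate $\|F_t\|_{L^2(\Om)}$ through $H^2(\Om)$ via the sharp Gagliardo--Nirenberg inequalities appearing in the proofs of Lemmas \ref{l:est} and \ref{gagliardo}, while making sure the resulting power of $\|u\|_{H^2(\Om)}$ stays compatible with the only time integrability available, namely $\int_0^{T_{\max}}\|u(s)\|_{H^2(\Om)}^2\,ds<\infty$. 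This compatibility constraint is precisely the origin of the restrictions on $(N,p)$ in the statement of Theorem \ref{GlobalThe}; apart from it, the argument is the routine quadratic Gronwall estimate described above.
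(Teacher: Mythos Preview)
Your argument is correct, but it takes a noticeably longer route than the paper. The paper does not go through the Duhamel estimate \eqref{es} at all for this lemma: instead it multiplies the $w$–equation \eqref{weq} directly by $-\alpha\Delta\bar w$, integrates, and uses the boundary condition $\partial_\nu w=-w_t$ to produce the clean identity
\[
\frac{\alpha}{2}\frac{d}{dt}\|\nabla w\|_{L^2(\Omega)}^2+\alpha\|w_t\|_{L^2(\Gamma_1)}^2+\alpha\lambda\|\Delta w\|_{L^2(\Omega)}^2=-\alpha\,\mathrm{Re}\,(F_t(u,w),\Delta w)_{L^2(\Omega)}.
\]
Young's inequality on the right and absorption of $\epsilon\|\Delta w\|_{L^2}^2$ by the parabolic term $\alpha\lambda\|\Delta w\|_{L^2}^2$ leave only $\|F_t\|_{L^2(\Omega)}^2\le C\|w\|_V^2$ (from \eqref{est_L2}), and Gronwall finishes. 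The role of $\lambda>0$ is thus exploited at the level of the \emph{$w$–equation itself}, not through the time-integrated $H^2$ bound on $u$.

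What each approach buys: the paper's energy method is shorter, needs only \eqref{est_L2} (the $L^2$ estimate for $F_t$), and avoids the dimension-dependent $V$-estimates of Lemma \ref{l:est} and the trace–interpolation step entirely. Your approach, by contrast, unifies the lemma with the final step of Section~7: you are essentially running the argument that the paper uses \emph{after} this lemma (equations \eqref{FtG1}--\eqref{FtV} and the conclusion from \eqref{es}) with a Gronwall kernel $a(s)=1+\|u(s)\|_{H^2}^2$ that is only $L^1$. This is perfectly valid and has the merit of showing that the whole $X_T$ bound can be closed in one pass, but the paper's separation of the two steps makes the mechanism --- $\lambda>0$ creates an absorbable $\|\Delta w\|_{L^2}^2$ in the energy balance for $w$ --- much more transparent (see Remark 7.1).
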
	 	 	   	
\begin{proof}
  Multiply, \eqref{weq} by $-\alpha{\Delta \bar{w}}$, integrate over $\Omega$ and take the real parts.  Then, by also using \eqref{est_L2} we have
    \begin{multline}\label{wGlobal}\frac{\alpha}{2}\frac{d}{dt}\|\nabla w\|_{L^2(\Omega)}^2+\alpha\|w_t\|_{L^2(\Gamma_1)}^2+\alpha\lambda\|\Delta w\|_{2}^2= -\alpha\text{Re}(F_t(u,w),\Delta w)_{L^2(\Omega)}\\
\le\frac{1}{4\epsilon}\|F_t(u,w)\|_{L^2(\Omega)}^2+\epsilon\|\Delta w\|_{L^2(\Omega)}^2\le C(E(0))\|w\|_V^2+\epsilon\|\Delta w\|_{L^2(\Omega)}^2.\end{multline}  The above inequality gives
$$\|w(t)\|_V^2\le \|w_0\|_V^2+C(E(0))\int_0^t\| w(s)\|_V^2ds.$$  Now, from Gronwall's inequality the desired property follows.
\end{proof}	

\begin{rem}
  From \eqref{wGlobal}, we note that we are able to control the term $ \epsilon\|\Delta w\|_{L^2(\Omega)}^2$ with the term $\alpha\lambda\|\Delta w\|_{2}^2$ at the left hand side.  This is an important property for CGLE.  For instance, in the case of the nonlinear Schrödinger equation \cite{Dynamic_BC}, one does not have such a control since $\lambda=0.$
\end{rem}

Now, it follows from \eqref{Ft1} and \eqref{est_L2} that

  		\begin{equation}\label{FtG1}
	 	 	   		\|\widetilde{\gamma} F_t(u,w) \|^2_{L^2(\Gamma_1)} \leq C \|F_t(u,w)\|_{H^1(\Omega) }\,.
	 	 \end{equation}
Moreover, from \eqref{integral1}, we get

\begin{equation}\begin{aligned}\label{FtV}
	 	 	   	\int_0^t\! \|{F}_t(u(s), u_t(s))\|_V\,ds &\leq C+C\,\int_0^t\!\|u(s)\|_{H^2(\Om)}^2\,ds.
	 	 	   		\end{aligned}
	 	 	   		\end{equation} 	   	
But the right hand side of \eqref{FtV} is bounded by a constant due to Lemma \eqref{H1global} (since $\lambda>0$).
Combining this with \eqref{es}, we obtain

$$\|(u,u_t)\|_{X_T}\le C.$$  Hence, we obtained a uniform bound for $\|(u,u_t)\|_{X_T}$ on $[0,T_{\max}).$ The proof of the global existence of strong solutions is complete.

\section{Weak Solutions}
In this section, we will prove the global existence and uniqueness of weak solutions given in Theorem \ref{main_density_theo}.

So, let $$u_0\in  V \text{ such that }\gamma_0\varphi\in L^{p+1}(\Gamma_1)$$ and $$ \{u_{\mu,0}\} $$ be smooth enough that
	 \begin{equation}
	 \label{conv_initial}
	 u_{\mu,0} \longrightarrow u_0 \quad \text{ in } Q\,
	 \end{equation} and
	 for each $ \mu\in\,\mathbb{N}, $ $ u_\mu $  is the unique strong solution of \eqref{GL_equation_1} with initial data $ \{u_{\mu,0}\}.$  Then, $u_\mu$ solves
	 \begin{equation}\label{GL_equation_density_mu}
	 \begin{cases}
	 \partial_tu_{\mu}-(\lambda+i\alpha)\triangle u_\mu+(\kappa+i\,\beta)|u_\mu|^{p-1}\,u_\mu-\gamma u_\mu=0 &\text { in } \Omega\times \mathbb{R_+},\\
	 \frac{\partial u_\mu}{\partial \nu} = -\partial_tu_{\mu} &\text { on } \Gamma_1\times \mathbb{R_+},\\
	 u_\mu=0  &\text { on } \Gamma_0\times \mathbb{R_+},\\
	 u_\mu(0)=u_{\mu,0}  &\text { in } \Omega.
	 \end{cases}
	 \end{equation}

	Now, we define $ z_{\mu,\sigma}:=u_\mu-u_\sigma$ for  $\mu, \sigma\in\,\mathbb{N}. $ We prove the following for $ \{z_{\mu,\sigma}\}$:
	
	 \begin{lem}\label{H1global_density}	
${E}_{\mu,\sigma}(t)\le {E}_{\mu,\sigma}(0)\exp(CT),\,t\in [0,T]$, where the definition of ${E}_{\mu,\sigma}$ is given in \eqref{energy_density}.
	 \end{lem}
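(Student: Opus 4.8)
The plan is to mimic the proof of Lemma \ref{H1global}, but applied to the difference $z\equiv z_{\mu,\sigma}=u_\mu-u_\sigma$ of two global strong solutions. Subtracting the two copies of \eqref{GL_equation_1} satisfied by $u_\mu$ and $u_\sigma$, the function $z$ solves
\begin{equation*}
\begin{cases}
z_t-(\lambda+i\alpha)\triangle z+(\kappa+i\beta)\big(|u_\mu|^{p-1}u_\mu-|u_\sigma|^{p-1}u_\sigma\big)-\gamma z=0 & \text{in }\Omega\times(0,\infty),\\
\dfrac{\partial z}{\partial\nu}=-z_t & \text{on }\Gamma_1\times(0,\infty),\\
z=0 & \text{on }\Gamma_0\times(0,\infty),\\
z(0)=u_{\mu,0}-u_{\sigma,0} & \text{in }\Omega.
\end{cases}
\end{equation*}
First I would reproduce, for $z$, the energy identity underlying Lemma \ref{H1global}: test the difference equation with a multiplier of the same structure as there (built from $-\alpha\triangle\bar z$, $\beta$ times the difference of the nonlinearities, and $\bar z_t$), take real parts and integrate in time. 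Using $\partial z/\partial\nu=-z_t$ on $\Gamma_1$, this produces the dissipative quantities $-\alpha\lambda\|\triangle z\|_{L^2(\Omega)}^2$, $-\alpha\|z_t\|_{L^2(\Gamma_1)}^2$ and a boundary $L^{p+1}$–type term, which together with $\tfrac{\alpha}{2}\|\nabla z\|_{L^2(\Omega)}^2$ and the remaining coercive terms define $E_{\mu,\sigma}(t)$ as in \eqref{energy_density}; everything else is collected on the right–hand side.

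The new difficulty, compared with Lemma \ref{H1global}, is that the nonlinear contributions no longer carry a favourable sign for the difference. The zeroth–order ($\gamma$–dependent) terms are handled exactly as in Lemma \ref{H1global}: for $\gamma\le 0$ they are favourable, while for $\gamma>0$ the relevant term is bounded using the complex monotonicity $\Re(|u_\mu|^{p-1}u_\mu-|u_\sigma|^{p-1}u_\sigma,\,u_\mu-u_\sigma)_{L^2(\Omega)}\ge0$ together with Hölder's inequality and the $L^{p+1}$–bounds of Lemma \ref{H1global}. The gradient–interaction term $-(\alpha\kappa+\beta\lambda)\Re(\nabla z,\nabla(|u_\mu|^{p-1}u_\mu-|u_\sigma|^{p-1}u_\sigma))_{L^2(\Omega)}$, however, does not factor and must be estimated in absolute value via the pointwise mean–value bound
\begin{multline*}
\big|\nabla\big(|u_\mu|^{p-1}u_\mu-|u_\sigma|^{p-1}u_\sigma\big)\big|\\
\le C\big(|u_\mu|^{p-1}+|u_\sigma|^{p-1}\big)|\nabla z|+C\big(|u_\mu|^{p-2}+|u_\sigma|^{p-2}\big)|z|\big(|\nabla u_\mu|+|\nabla u_\sigma|\big),
\end{multline*}
followed by Hölder, the Gagliardo–Nirenberg inequalities already used in Lemmas \ref{l:est} and \ref{gagliardo} (valid precisely in the admissible range of $(p,N)$), and the uniform a priori bounds $\|u_\mu\|_{C([0,T];V\cap L^{p+1}(\Omega))}$, $\|u_\mu\|_{L^2(0,T;H^2(\Omega))}$, $\|u_\mu\|_{L^{2p}(0,T;L^{2p}(\Omega))}$ supplied by Lemma \ref{H1global}; any $\|\triangle z\|_{L^2(\Omega)}^2$ or $\|z_t\|_{L^2(\Gamma_1)}^2$ factors produced along the way are absorbed into the dissipative terms by Young's inequality. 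The boundary nonlinear terms are treated the same way after inserting $\partial z/\partial\nu=-z_t$ and using the trace–interpolation inequality \eqref{trace} applied to $|u_\mu|^{p-1}u_\mu-|u_\sigma|^{p-1}u_\sigma$.

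The main obstacle is exactly this gradient–interaction term: in contrast to the single–solution case of Lemma \ref{H1global}, where $(\alpha\kappa+\beta\lambda)\Re(\nabla(|u|^{p-1}u),\nabla u)_{L^2(\Omega)}\le0$ has a clean sign, for the difference the best one can obtain is an $E_{\mu,\sigma}$–controlled upper bound, and closing it forces one to exploit the full interior regularity of the $u_\mu$ from Lemma \ref{H1global}, which is also why the restrictions on $(p,N)$ reappear here. Once all nonlinear terms are controlled, collecting the estimates gives
\[
E_{\mu,\sigma}(t)\le E_{\mu,\sigma}(0)+C\int_0^t E_{\mu,\sigma}(s)\,ds,\qquad t\in[0,T],
\]
with $C=C(\alpha,\beta,\lambda,\kappa,\gamma,p,N,T,u_0)$ independent of $\mu,\sigma$ — the $\mu$–independence coming from the uniform bounds of Lemma \ref{H1global} together with the convergence \eqref{conv_initial} of $u_{\mu,0}$ in $Q$ — and Gronwall's inequality yields $E_{\mu,\sigma}(t)\le E_{\mu,\sigma}(0)\exp(CT)$ on $[0,T]$, as claimed.
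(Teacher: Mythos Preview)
Your overall plan—take the same multiplier $-\alpha\Delta z+\beta G(z)$ with $G(z)=|u_\mu|^{p-1}u_\mu-|u_\sigma|^{p-1}u_\sigma$, test it against $z_t$, and close by Gronwall—matches the paper exactly. The difference lies in how you propose to control the nonlinear remainder terms, and here your route and the paper's diverge.

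The paper does \emph{not} try to close using only the uniform $L^\infty_tV\cap L^2_tH^2\cap L^{2p}_{t,x}$ bounds of Lemma~\ref{H1global}. Instead it exploits that $u_\mu,u_\sigma$ are global \emph{strong} solutions, hence in $C([0,T];H^2(\Omega))$ by Theorem~\ref{GlobalThe}, and simply writes the pointwise-in-time Lipschitz bounds
\[
\|G(z)\|_{L^2(\Omega)}\le C(\|u_\mu\|_{H^2},\|u_\sigma\|_{H^2})\|z\|_{L^2(\Omega)},\qquad
\|G(z)\|_{V}\le C(\|u_\mu\|_{H^2},\|u_\sigma\|_{H^2})\|z\|_{V}.
\]
With these, the gradient–interaction term and the boundary term are bounded by $C\|z\|_V^2$ plus $\epsilon$-small multiples of $\|\Delta z\|_{L^2(\Omega)}^2$ and $\|z_t\|_{L^2(\Gamma_1)}^2$, which are absorbed into the left. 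The resulting $C$ in $\exp(CT)$ depends on $\|u_{\mu,0}\|_{H^2},\|u_{\sigma,0}\|_{H^2}$; the lemma as stated does not require it to be uniform in $\mu,\sigma$.

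Your attempt to get a $\mu,\sigma$-independent constant out of Lemma~\ref{H1global} alone is more ambitious, but two points are missing. First, you do not treat the interior term $\beta\,\Re(G(z),z_t)_{L^2(\Omega)}$: for a single solution this is the total derivative $\tfrac{\beta}{p+1}\tfrac{d}{dt}\|u\|_{L^{p+1}}^{p+1}$, but for the difference it is not, and in the paper this is exactly where the $H^2$-Lipschitz bound \eqref{GV} is invoked (after substituting the equation for $z_t$). Second, if you insist on using only $u_\mu\in L^2(0,T;H^2)$ rather than $C([0,T];H^2)$, then the coefficients produced by your mean-value/Gagliardo–Nirenberg scheme are only \emph{time-integrable}, and checking that they actually lie in $L^1(0,T)$ uniformly in $\mu$ for the full admissible range of $(p,N)$ is delicate (for instance, the naive $L^\infty_x$ interpolation gives $\|u_\mu\|_\infty^{2(p-1)}\lesssim\|u_\mu\|_{H^2}^{p-1}$ for $N=2$, which needs $p\le3$, not $p\le5$). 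The paper bypasses all of this by using the stronger $C([0,T];H^2)$ control on strong solutions.
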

	
	 \begin{proof}
Setting $G(z_{\mu, \sigma}):=|u_\mu|^{p-1}\,u_\mu-|u_\sigma|^{p-1}\,u_\sigma $, we observe that	 	
	 	\begin{multline}
	 	\Re(-\alpha\Delta z_{\mu, \sigma}+\beta\, G(z_{\mu, \sigma}),z_{\mu, \sigma}^\prime)\\
	 	 = Re\left(-\alpha\Delta z_{\mu, \sigma}+\beta\, G(z_{\mu, \sigma}),(\lambda+i\alpha)\triangle z_{\mu, \sigma}-(\kappa+i\beta)\, G(z_{\mu, \sigma})+\gamma z_{\mu, \sigma}\right)\\
	 	=-\alpha\lambda\|\Delta z_{\mu, \sigma}\|_{L^2(\Omega)}^2+\alpha\gamma \|\nabla z_{\mu, \sigma}\|_{L^2(\Omega)}^2-\alpha\gamma Re\left(\frac{\partial z_{\mu, \sigma}}{\partial n},z_{\mu, \sigma}\right)_{L^2(\Gamma_1)}-\kappa\beta\|G(z_{\mu, \sigma})\|_{2}^{2}+\beta\gamma(G(z_{\mu, \sigma}),z_{\mu, \sigma})\\
	 	+\alpha Re(\Delta z_{\mu, \sigma},(\kappa+i\beta)G(z_{\mu, \sigma}))+\beta Re(G(z_{\mu, \sigma}),(\lambda+i\alpha)\triangle z_{\mu, \sigma})\\
	 	=-\alpha\lambda\|\Delta z_{\mu, \sigma}\|_{L^2(\Omega)}^2+\alpha\gamma \|\nabla z_{\mu, \sigma}\|_{L^2(\Omega)}^2-\alpha\gamma Re\left(\frac{\partial z_{\mu, \sigma}}{\partial n},z_{\mu, \sigma}\right)_{L^2(\Gamma_1)}-\kappa\beta\|f(u)\|_{2}^{2}+\beta\gamma(G(z_{\mu, \sigma}),z_{\mu, \sigma})\\
	 	-\alpha Re(\nabla z_{\mu, \sigma},(\kappa+i\beta)\nabla G(z_{\mu, \sigma}))+\alpha \Re\left(\frac{\partial z_{\mu, \sigma}}{\partial n},(\kappa+i\beta)G(z_{\mu, \sigma})\right)_{L^2(\Gamma_1)}\\
	 	-\beta \Re\left(\nabla G(z_{\mu, \sigma})),(\lambda+i\alpha)\nabla z_{\mu, \sigma}\right)+\beta \Re\left(G(z_{\mu, \sigma}),(\lambda+i\alpha)\frac{\partial z_{\mu, \sigma}}{\partial n}\right)_{L^2(\Gamma_1)}.
	 	\end{multline}
On the other hand, it follows that
	 	\begin{multline}
	 	\left|-\alpha\gamma Re\left(\frac{\partial z_{\mu, \sigma}}{\partial n},z_{\mu, \sigma}\right)_{L^2(\Gamma_1)}\right| \le \frac{\alpha\gamma}{2}\left|z_{\mu, \sigma}^\prime,z_{\mu, \sigma})_{L^2(\Gamma_1)}\right|\\
	 	\leq  {\alpha\gamma}M\,\|z_{\mu, \sigma}^\prime\|_{L^2(\Gamma_1)}\,\|z_{\mu, \sigma}\|_{L^2(\Gamma_1)}\\
\le	 	\frac{\alpha\gamma}{2}\left(C_{\epsilon_1}\|z_{\mu, \sigma}\|_{L^2(\Gamma_1)}^2+\epsilon_1\left\|z_{\mu, \sigma}^\prime\right\|_{L^{2}(\Gamma_1)}^{2}\right)\\
	 	\le \frac{\alpha\gamma}{2}\left(C_{\epsilon_1}\|\nabla z_{\mu, \sigma}\|_{L^2(\Omega)}^2+\epsilon_1\left\|z_{\mu, \sigma}^\prime\right\|_{L^{2}(\Gamma_1)}^{2}\right).
	 	\end{multline}
	 	
	 	Alternatively, by using $\displaystyle \frac{\partial z_{\mu, \sigma}}{\partial \nu}|_{\Gamma_1}=-z_{\mu, \sigma}^ \prime$ one also has
	 	
	 	\begin{equation}
	 	-\alpha\gamma \Re\left(\frac{\partial z_{\mu, \sigma}}{\partial n},z_{\mu, \sigma}\right)_{L^2(\Gamma_1)}= \frac{\alpha\gamma}{2}\frac{d}{dt}\|z_{\mu, \sigma}\|_{L^2(\Gamma_1)}^2.
	 	\end{equation}
	 	

	 	\begin{multline}\label{derivative}
	 	\Re(-\alpha\Delta z_{\mu, \sigma}+\beta\, G(z_{\mu, \sigma}),z_{\mu, \sigma}^\prime)=\frac{\alpha}{2} \frac{d}{dt}\|\nabla z_{\mu, \sigma}\|_{L^2(\Omega)}^2-\Re\left(\frac{\partial z_{\mu, \sigma}}{\partial n},z_{\mu, \sigma}^\prime\right)_{L^2(\Gamma_1)}+\Re(\beta\, G(z_{\mu, \sigma}),z_{\mu, \sigma}^\prime) \\
	 	=\frac{\alpha}{2} \frac{d}{dt}\|\nabla z_{\mu, \sigma}\|_{L^2(\Omega)}^2+\alpha\|z_{\mu, \sigma}^\prime\|_{L^2(\Gamma_1)}^2+\Re(\beta\, G(z_{\mu, \sigma}),z_{\mu, \sigma}^\prime).
	 	\end{multline}
Now,  taking account \eqref{H2}, from the following local Lipschitz estimates:
\begin{align}
\label{GL2}\|G(z_{\mu, \sigma})\|_{L^2(\Omega)} &\leq \left(\|u_{\mu}\|_{H^2},\|u_{\sigma}\|_{H^2}\right)\,\|z_{\mu, \sigma}\|_{L^2(\Omega)} \leq C\left(\|u_{\mu,0}\|_{H^2},\|u_{\sigma,0}\|_{H^2}\right)\|z_{\mu, \sigma}\|_{L^2(\Omega)}\\
\label{GV}\|G(z_{\mu, \sigma})\|_V &\leq \left(\|u_{\mu}\|_{H^2},\|u_{\sigma}\|_{H^2}\right)\,\|z_{\mu, \sigma}\|_V\leq C\left(\|u_{\mu,0}\|_{H^2},\|u_{\sigma,0}\|_{H^2}\right)\|z_{\mu, \sigma}\|_V,
\end{align}
we have
	 	\begin{multline}\label{derivative1}
	 	\Re(-\alpha\Delta z_{\mu, \sigma}+\beta\, G(z_{\mu, \sigma}),z_{\mu, \sigma}^\prime)
	 	=\frac{\alpha}{2} \frac{d}{dt}\|\nabla z_{\mu, \sigma}\|_{L^2(\Omega)}^2+\alpha\|z_{\mu, \sigma}^\prime\|_{L^2(\Gamma_1)}^2+\Re(\beta\, G(z_{\mu, \sigma}),z_{\mu, \sigma}^\prime)\\
	 	\leq \frac{\alpha}{2} \frac{d}{dt}\|\nabla z_{\mu, \sigma}\|_{L^2(\Omega)}^2+\alpha\|z_{\mu, \sigma}^\prime\|_{L^2(\Gamma_1)}^2 +\beta\,C^\prime\,\|\nabla\,G(z_{\mu, \sigma})\|_{L^2(\Omega)}\,\|z_{\mu, \sigma}^\prime\|_{L^2(\Omega)}\\
	 		 	\leq \frac{\alpha}{2} \frac{d}{dt}\|\nabla z_{\mu, \sigma}\|_{L^2(\Omega)}^2+\alpha\|z_{\mu, \sigma}^\prime\|_{L^2(\Gamma_1)}^2 +\beta\,C_{\epsilon_2}\|\nabla\, z_{\mu, \sigma}\|_{L^2(\Omega)}^2 + \beta\,C\,\epsilon_2\left[\|\triangle\,z_{\mu, \sigma}\|_{L^2(\Om)}^2+\|G(z_{\mu, \sigma})\|^{2}_{L^{2}(\Om)}+
	 		 	\|z_{\mu, \sigma}\|_{V}^2\right]
	 	\end{multline}
	 	\begin{multline}
	 	-\alpha Re(\nabla z_{\mu, \sigma},(\kappa+i\beta)\nabla G(z_{\mu, \sigma}))-\beta Re(\nabla G(z_{\mu, \sigma}),(\lambda+i\alpha)\nabla z_{\mu, \sigma})
	 	=-(\alpha\kappa+\beta\lambda)Re(\nabla(G(z_{\mu, \sigma})),\nabla z_{\mu, \sigma})
	 	\\\le  |\alpha\kappa+\beta\lambda| Re(\nabla G(z_{\mu, \sigma}),\nabla z_{\mu, \sigma})\\
	 	\le |\alpha\kappa+\beta\lambda|\,\|G(z_{\mu, \sigma})\|_V\,\|z_{\mu, \sigma}\|_V
	 	\le C_1\,\|z_{\mu, \sigma}\|_V^2
	 	\end{multline}
	 	and \begin{equation}
	 	|	\beta\gamma(G(z_{\mu, \sigma}),z_{\mu, \sigma})| \leq \beta\,\ga\,C_2\,\|z_{\mu, \sigma}\|_V^2\,.
	 	\end{equation}

	 	\begin{multline}
	 	\alpha \Re\left(\frac{\partial z_{\mu, \sigma}}{\partial n},(\kappa+i\beta)G(z_{\mu, \sigma})\right)_{L^2(\Gamma_1)} +\beta \Re\left(G(z_{\mu, \sigma}),(\lambda+i\alpha)\frac{\partial z_{\mu, \sigma}}{\partial n}\right)_{L^2(\Gamma_1)}\\
	 	=(\alpha\kappa+\beta\lambda)\Re\left(G(z_{\mu, \sigma}),\frac{\partial z_{\mu, \sigma}}{\partial n}\right)_{L^2(\Gamma_1)}
\\
\le	 	|\alpha\kappa+\beta\lambda|\left(C_{\epsilon_3}\|G(z_{\mu, \sigma})\|_{L^2(\Gamma_1)}^2+\epsilon_3\left\|z_{\mu, \sigma}^\prime\right\|_{L^{2}(\Gamma_1)}^{2}\right)\\
\le |\alpha\kappa+\beta\lambda|\left(C_{\epsilon_3}\|\nabla z_{\mu, \sigma}\|_{L^2(\Omega)}^2+\epsilon_3\left\|z_{\mu, \sigma}^\prime\right\|_{L^{2}(\Gamma_1)}^{2}\right).
	 	\end{multline}
	 	
Collecting all the estimates given above, we get
\begin{eqnarray}
\begin{aligned}
\label{inequality_energy_density}\frac{\alpha}{2} &\frac{d}{dt}\|\nabla z_{\mu, \sigma}\|_{L^2(\Omega)}^2+\alpha\left(1-\frac{\ga\,\epsilon_1}{2}-|\alpha\kappa+\beta\lambda|\epsilon_3\right)\|z_{\mu, \sigma}^\prime\|_{L^2(\Gamma_1)}^2\\&+\left(\alpha\lambda-\beta\,C\,\epsilon_2\right)\|\Delta z_{\mu, \sigma}\|_{L^2(\Omega)}^2+(\kappa\beta-\beta\,C\,\epsilon_2)\|G(z_{\mu, \sigma})\|_{2}^{2}\\
&\leq C\,\|\nabla z_{\mu, \sigma}\|_{L^2(\Omega)}^2\,.
\end{aligned}
\end{eqnarray}
Considering $ \epsilon_i, i=1,2,3$ small enough and integrating \eqref{inequality_energy_density} in $ t\in\,[0,T], $ we have
	 	
	 	$${E}_{\mu,\sigma}(t)\le C{E}_{\mu,\sigma}(0)+C\int_0^t{E}_{\mu,\sigma}(s)ds,$$
	 	where
	 	
	 	\begin{equation}
	 	\label{energy_density}{E}_{\mu,\sigma}(t):=\|\nabla z_{\mu, \sigma}(t)\|_{L^2(\Omega)}^2+C_1\int_0^t\|z_{\mu, \sigma}^\prime(s)\|_{L^2(\Gamma_1)}^2\,ds+C_2\int_0^t\|\Delta z_{\mu, \sigma}(s)\|_{L^2(\Omega)}^2\,ds
	 	\end{equation}
	 	Now, using Gronwall's inequality, we conclude that \begin{equation}\label{EE02}{E}_{\mu,\sigma}(t)\le {E}_{\mu,\sigma}(0)\exp(CT)\end{equation} for $t\in [0,T]$.
	 \end{proof}
	
	 From \eqref{conv_initial} and Lemma \ref{H1global_density},  we conclude that there exists a function $ u $ such that, for all $ T>0, $ we have
	
	 \begin{eqnarray}
	 \label{Vstrong}u_\mu&\longrightarrow& u \quad \,\,\,\,\,\text{ in } \quad C([0,T];\,V)\hookrightarrow\,L^2(0,T;\,\ld),\\
	 	 \label{u'strong}u_\mu^\prime&\longrightarrow& u^\prime \quad\,\,\,\, \text{ in } \quad L^2(0,T;\,L^2(\Ga_1)),\\
	 	 	 \label{Deltastrong}\Delta\,u_\mu&\longrightarrow& \Delta\,u \quad \text{ in } \quad L^2(0,T;\,\ld).
	 \end{eqnarray}
	
	 Moreover, from \eqref{H2}, \eqref{Vstrong} and \eqref{Deltastrong}, we have
\begin{equation}
	 	 	 	 \label{H2strong}u_\mu\longrightarrow u \quad \text{ in } \quad L^2(0,T;\,H^2(\Om)).
\end{equation}

Considering the last convergence and those given by \eqref{Vstrong}-\eqref{Deltastrong}, it follows that $ u $ is a weak solution in the sense of Definition \ref{Defweak}.

Now, let $ u_1 $ and $ u_2 $ be two solutions of \eqref{GL_equation}. Then $ w=u_1-u_2 $ satisfies
\begin{equation}\label{GL_equation_w}
\begin{cases}
w_t-(\lambda+i\alpha)\triangle w+(\kappa+i\,\beta)\left(|u_1|^{p-1}\,u_1-|u_2|^{p-1}\,u_2\right)-\gamma w=0 &\text { in } \Omega\times \mathbb{R_+},\\
\frac{\partial w}{\partial \nu} = -w_t &\text { on } \Gamma_1\times \mathbb{R_+},\\
w=0  &\text { on } \Gamma_0\times \mathbb{R_+},\\
w(0)=0  &\text { in } \Omega.
\end{cases}
\end{equation}

Since  $ w\in\,H^2(\Om) $ and $ w_t\in\,L^2(\Om), $ a. e. $ t\in\,[0,T], $   we can repeat the procedure used in Lemma \ref{H1global_density} with which we can get $w=0  $ for a.e. $ (x,t)\in\,\Omega\times [0,T]. $

\section{Inviscid Limits}  In this section, we will prove Theorems \ref{Inviscid0} and \ref{Inviscid}. which state that the solutions of the CGLE subject to dynamic boundary conditions converge to the solution of NLS subject to same type of boundary conditions as the parameter pair $\epsilon=(\lambda,\kappa)$ tend to zero.

We know from Lemma \ref{H1global} that $u_\epsilon$ is uniformly bounded in $L^{\infty}(0,T;V)$, $\partial_tu_{\epsilon}$ is bounded in $L^2(0,T;L^2(\Gamma_1))$, and $|u_\epsilon|^{p-1}u_{\epsilon}$ is uniformly bounded in $L^\infty(0,T;L^{(p+1)'}(\Omega))$.  Using the main equation and Lemma \ref{H1global} again, we can also see that $\partial_t u_\epsilon$ is uniformly bounded in $L^2(0,T; V')$.  From these bounds, it follows that there exists a subsequence of $u_{\epsilon}$ (denoted same) such that

\begin{eqnarray}
  u_\epsilon \rightharpoonup u &  & \text{ weakly-star in }   L^{\infty}(0,T;V);\\
  \partial_tu_{\epsilon} \rightharpoonup u_t & &  \text{ weakly in }   L^{2}(0,T;L^2(\Gamma_1));\\
  \partial_tu_\epsilon\rightharpoonup u_t &  & \text{ weakly in }   L^{2}(0,T;V');\\
  |u_\epsilon|^{p-1}u_{\epsilon}\rightharpoonup \xi &  & \text{ weakly-star in }   L^\infty(0,T;L^{(p+1)'}(\Omega)).
\end{eqnarray}

Recall that the bounded sets in $X\equiv \{u\in  L^{2}(0,T;V)|u_t\in  L^{2}(0,T;V')\}$ are relatively compact in $L^2(0,T;L^2(\Omega))$.  Therefore, we can assume $u_{\epsilon}\rightarrow u$ strongly in  $L^2(0,T;L^2(\Omega))$ and $u_{\epsilon}\rightarrow u$ a.e. in $(0,T)\times \Omega.$  This implies that $|u_{\epsilon}|^{p-1}u_\epsilon\rightarrow |u|^{p-1}u$ a.e. in $(0,T)\times \Omega.$ But we have $|u_{\epsilon}|^{p-1}u_\epsilon$ is bounded in $L^\infty(0,T;L^{(p+1)'}(\Omega))$. Therefore $\xi\equiv |u|^{p-1}u.$   Now, letting $\epsilon\rightarrow 0$, we see that $u$ solves the idbvp for the NLS.   This completes the proof of Theorem \ref{Inviscid0}.

In order to prove Theorem \ref{Inviscid}, we will restrict our attention to only dimension $N=2$, $p=3$, and $\beta>0$, for which we know that there is a global strong solution for the NLS \cite{Dynamic_BC}.  To this end, let $u_\epsilon$ be the global strong solution of \eqref{GL_equation_1} with initial datum $u_\epsilon(0)=u^0_\epsilon$ and $p=3$.  Suppose $u$ is a global strong solution of the cubic defocusing nonlinear Schrödinger equation with dynamic boundary conditions below ($\alpha,\beta>0$):
\begin{equation}\label{NLSeq}
\begin{cases}
u_t-i\alpha\triangle u+i\beta|u|^{2}u-\gamma u=0 &\text { in } \Omega\times (0,T),\\
\displaystyle\frac{\partial u}{\partial \nu} = -u_t &\text { on } \Gamma_1\times (0,T),\\
u=0  &\text { on } \Gamma_0\times (0,T),\\
u(0)=u_0  &\text { in } \Omega.
\end{cases}
\end{equation}

Moreover, let us suppose that $u_\epsilon^0\rightarrow u_0$ as $\epsilon\rightarrow 0$ in $V$.

Set $w= u_\epsilon - u.$  Then, $w$ solves the following problem:
\begin{equation}\label{wDiffeq}
\begin{cases}
w_t-i\alpha\triangle w+i\beta\left(|u_\epsilon|^{2}u_\epsilon-|u|^{2}u\right)-\gamma w-\lambda\Delta u_\epsilon+\kappa|u_\epsilon|^{2}u_\epsilon=0 &\text { in } \Omega\times (0,T),\\
\displaystyle\frac{\partial w}{\partial \nu} = -w_t &\text { on } \Gamma_1\times (0,T),\\
w=0  &\text { on } \Gamma_0\times (0,T),\\
w(0)=w_0^\epsilon\equiv u_\epsilon^0-u_0  &\text { in } \Omega.
\end{cases}
\end{equation}

We multiply \eqref{wDiffeq} by $-\Delta \bar{w}$, integrate over $\Omega\times (0,t)$, and take the real parts:

\begin{multline}\label{impmain}
\frac{1}{2}\|\nabla w(t)\|_{L^2(\Omega)}^2+\int_0^t\|w_t\|_{L^2(\Gamma_1)}^2ds+\beta\text{Im}\int_0^t(|u_\epsilon|^{2}u_\epsilon-|u|^{2}u,\Delta w)_{L^2(\Omega)}ds\\-\gamma\int_0^t\|\nabla w\|_{L^2(\Omega)}^2ds+\ga\int_0^t\text{Re}(w,\partial_\nu w)_{L^2(\Gamma_1)}ds
+\lambda\text{Re}\int_0^t(\Delta u_\epsilon,\Delta w)_{L^2(\Omega)}ds\\
-\kappa\int_0^t\text{Re}(|u_\epsilon|^{2}u_\epsilon,\Delta w)_{L^2(\Omega)}ds=\frac{1}{2}\|\nabla w_0^\epsilon\|_{L^2(\Omega)}^2.
\end{multline}

Using the boundary condition, we estimate

\begin{equation}\frac{\gamma}{2}\text{Re}(w,\partial_\nu w)_{L^2(\Gamma_1)}\le \frac{1}{2}\|w_t\|_{L^2(\Gamma_1)}^2+\frac{\gamma^2}{8}\|w\|_{L^2(\Gamma_1)}^2\le \frac{1}{2}\|w_t\|_{L^2(\Gamma_1)}^2+\frac{C\gamma^2}{8}\|w\|_{V}^2,\end{equation} where the last inequality follows from the trace theorem.
On the other hand, we calculate

\begin{multline}\label{imp0}
  \beta\text{Re}\int_0^t(|u_\epsilon|^{2}u_\epsilon-|u|^{2}u,\Delta w)_{L^2(\Omega)}ds \\
  =  -\beta\text{Re}\int_0^t \left(\nabla\left(|u_\epsilon|^{2}u_\epsilon-|u|^{2}u\right),\nabla w\right)_{L^2(\Omega)}ds + \beta\text{Re}\int_0^t(|u_\epsilon|^{2}u_\epsilon-|u|^{2}u,\frac{\partial w}{\partial \nu})_{L^2(\Gamma_1)}ds\\
  \le \beta\int_0^t \|\nabla\left(|u_\epsilon|^{2}u_\epsilon-|u|^{2}u\right)\|_{L^2(\Omega)}\|\nabla w\|_{L^2(\Omega)}ds+\frac{1}{2}\int_0^t\|w_t\|_{L^2(\Gamma_1)}^2ds\\
  +\frac{\beta^2}{2}\int_0^t\||u_\epsilon|^{2}u_\epsilon-|u|^{2}u\|_{L^2(\Gamma_1)}^2ds.
\end{multline}

One can prove that (by slightly modifying the calculations in \cite[Lemma 3.3]{Ozsar2015}):
\begin{multline}\label{imp1}\|\nabla\left(|u_\epsilon|^{2}u_\epsilon-|u|^{2}u\right)\|_{L^2(\Omega)} \le C\|u_\epsilon-u\|_{L^2(\Omega)}\left(\|u_\epsilon\|_\infty+\|u\|_\infty\right)^2\\
+C\|u_\epsilon-u\|_{4}\left(\|u_\epsilon\|_{\infty}+\|u\|_{\infty}\right)\left(\|\nabla u_\epsilon\|_{4}+\|\nabla u\|_{4}\right)\\
\le C\|u_\epsilon-u\|_V\left(\|u_\epsilon\|_{H^2(\Omega)}+\|u\|_{H^2(\Omega)}\right)^2\\ +C\|u_\epsilon-u\|_{V}\left(\|u_\epsilon\|_{H^2(\Omega)}+\|u\|_{H^2(\Omega)}\right)\left(\|u_\epsilon\|_{H^2(\Omega)}^{\frac{1}{2}}\|u_\epsilon\|_{V}^{\frac{1}{2}}+\|u\|_{H^2(\Omega)}^{\frac{1}{2}}\|u\|_{V}^{\frac{1}{2}}\right)\\
\le C\|w\|_{V},\end{multline} where the second inequality above follows from the facts $V\hookrightarrow L^2(\Omega)$, $H^2(\Omega)\hookrightarrow L^\infty(\Omega)$ $(N=2)$, $H^1(\Omega)\hookrightarrow L^4(\Omega)$ ($N=2$), and the Gagliardo-Nirenberg inequality \eqref{GagN2}.  Finally, the last inequality follows from the fact that $u_\epsilon,u\in C([0,T];V\cap H^2(\Omega)).$

By using the same technique in \cite[Lemma 3.3]{Ozsar2015}),  we have also the estimate
\begin{multline}\label{imp2}\frac{\beta^2}{2}\||u_\epsilon|^{2}u_\epsilon-|u|^{2}u\|_{L^2(\Gamma_1)}^2 \\
\le C\left(\|u_\epsilon\|_{L^\infty(\Gamma_1)}^4+\|u\|_{L^\infty(\Gamma_1)}^4\right)\|u_\epsilon-u\|_{L^2(\Gamma_1)}^2\\
\le  C\left(\|u_\epsilon\|_{H^1(\Gamma_1)}^4+\|u\|_{H^1(\Gamma_1)}^4\right)\|u_\epsilon-u\|_{L^2(\Gamma_1)}^2\le C\left(\|u_\epsilon\|_{H^\frac{3}{2}(\Omega)}^4+\|u\|_{H^\frac{3}{2}(\Omega)}^4\right)\|u_\epsilon-u\|_{L^2(\Gamma_1)}^2\\
\le C\left(\|u_\epsilon\|_{H^2(\Omega)}^4+\|u\|_{H^2(\Omega)}^4\right)\|u_\epsilon-u\|_{L^2(\Gamma_1)}^2\le C\|w\|_V^2,
\end{multline} where the second inequality follows from the embedding $H^1(\Gamma_1)\hookrightarrow L^\infty(\Gamma_1)$ ($\Gamma_1$ is 1-dimensional manifold), the third inequality follows from the Sobolev trace theorem, and the last inequality follows from the fact that $u,u_\epsilon\in C([0,T];H^2(\Omega))$ and the trace theorem.

Combining \eqref{impmain}-\eqref{imp2}, it follows that

\begin{multline}\label{impgood}
\frac{1}{2}\|\nabla w(t)\|_{L^2(\Omega)}^2\le \frac{1}{2}\|\nabla w_0^\epsilon\|_{L^2(\Omega)}^2+C\int_0^T\|w(t)\|_V^2dt\\
+\lambda\|\Delta u_\epsilon\|_{L^2(\Omega)}\|\Delta w\|_{L^2(\Omega)}+\kappa\|u_\epsilon\|_{L^6(\Omega)}^{3}\|\Delta w\|_{L^2(\Omega)}ds.
\end{multline}

In the above inequality $\|u_\epsilon\|_{L^6(\Omega)}$, $\|\Delta w\|_{L^2(\Omega)}$, and $\|\Delta u_\epsilon\|_{L^2(\Omega)}$ are bounded by a constant since $H^1(\Omega)\hookrightarrow L^6(\Omega)$, and $ u_\epsilon,u\in C([0,T];H^2(\Omega))$.  Hence, \eqref{impgood} gives

\begin{equation}
  \|\nabla w(t)\|_{L^2(\Omega)}^2\le \|u_\epsilon^0-u_0\|_{L^2(\Omega)}^2+(\lambda+\kappa)C+C\int_0^T\|w(t)\|_V^2dt.
\end{equation}

Unleashing the Gronwall's inequality, we have

$$ \|u_\epsilon-u\|_V^2\le \|u_\epsilon^0-u_0\|_{L^2(\Omega)}^2+(\lambda+\kappa)C\exp(CT).$$   Hence, we have just proven that $$\|u_\epsilon-u\|_V=O(\lambda)+O(\kappa)$$ as $\epsilon=(\lambda,\kappa)\rightarrow 0.$

\section{Long-time Behaviour of Solutions}  In this section, we give the proofs of Theorems \ref{decaythm} and \eqref{decaythm2}.  We will first prove the easy case ($\gamma<0$) and then prove the more subtle case $\gamma=0.$  We know from the theory of weak solutions that for any $$u_0\in Q\equiv\{\varphi\in V\cap L^{p+1}(\Omega) \text{ such that }\gamma_0\varphi\in L^{p+1}(\Gamma_1)\},$$ there corresponds a unique global weak solution $u$ which solves \eqref{GL_equation_1}. We claim that this solution tends to zero in $H^1(\Omega)\cap L^{p+1}(\Omega)$ sense as $t\rightarrow \infty$ with an exponential rate of decay.  We will use the multiplier technique to prove this where the case $\gamma=0$ requires a special multiplier.  \\

{\bf{Case 1 ($\gamma<0$):}}\\
We consider the functional $$F(t)\equiv \frac{\alpha}{2}\|\nabla u(t)\|_{L^2(\Omega)}^2+\frac{\beta}{p+1}\|u(t)\|_{L^{p+1}(\Omega)}^{p+1}.$$  Note that $F(t)\le E(t)$ given in \ref{EtDef2}.  Moreover, we have from \eqref{energyEst2} that
\begin{equation}\label{energyEst3}
 F(t)\le  E(0)+\gamma \int_0^tF(s)ds.
\end{equation}
Employing Gronwall's inequality:
\begin{equation}\label{energyEst4}
F(t)\le E_0e^{-|\gamma| t}
\end{equation} for $t\ge 0,$ where

\begin{multline}E_0=E(0)=\frac{\alpha}{2} \|\nabla u_0\|_{L^2(\Omega)}^2 +\frac{\beta}{p+1}\|u_0\|_{L^{p+1}(\Omega)}^{p+1} -\frac{\alpha\gamma}{2}\|u_0\|_{L^2(\Gamma_1)}^2\\+\frac{1}{p+1}(\alpha\kappa+\beta\lambda)\|u_0\|_{L^{p+1}(\Gamma_1)}^{p+1}\ge 0.\end{multline}

 This proves that solutions decay in $H^1(\Omega)\cap L^{p+1}(\Omega)$ sense to zero at an exponential rate.\\

{\bf{Case 2 ($\gamma=0$):}}\\  This case is more subtle and requires some control theoretic tools.  We know that solutions of the CGLE with periodic or homogeneous Dirichlet/Neumann boundary conditions do not have to decay if $\gamma\ge 0.$  We will prove that the dynamic boundary condition plays the role of a damping term and actually stabilizes the system from the boundary in the absence of an internal damping mechanism.  Therefore, the boundary condition $\displaystyle \frac{\partial u}{\partial \nu}\bigg|_{\Gamma_1}=-u_t$ can be considered a stabilizing boundary feedback within this context.

Note that Lemma \ref{H1global} tells us that when $\gamma=0$, the energy is non-increasing.  In order to prove stabilization, we will use an integral inequality given in the following lemma.

\begin{lem}\label{intinlem}\cite[Theorem 8.1]{Komornik} Let $F:\mathbb{R}_+\rightarrow\mathbb{R}_+$ be a non-increasing function and assume that there exists a constant $C>0$ such that
\begin{equation}\label{integineq}
  \int_t^\infty F(s)ds\le CF(t) \text{ for all } t\ge 0.
\end{equation}  Then, $$F(t)\le F(0)e^{1-\frac{t}{C}}  \text{ for all } t\ge 0.$$

\end{lem}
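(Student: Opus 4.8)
The plan is to convert the integral inequality \eqref{integineq} into a first-order differential inequality for the primitive of $F$ and then run a Gronwall-type argument. I would set
$$G(t):=\int_t^\infty F(s)\,ds,\qquad t\ge 0.$$
First I would verify that $G$ is finite and regular enough to work with: evaluating \eqref{integineq} at $t=0$ gives $G(0)=\int_0^\infty F(s)\,ds\le CF(0)<\infty$, and since $F$ is non-increasing it satisfies $0\le F(s)\le F(0)$ for all $s\ge 0$, so $G$ is Lipschitz on $[0,\infty)$ with constant $F(0)$. In particular $G$ is absolutely continuous, and by the Lebesgue differentiation theorem applied to the monotone (hence locally integrable) function $F$ one has $G'(t)=-F(t)$ for a.e.\ $t\ge 0$.

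Next I would rewrite \eqref{integineq} as the pointwise-a.e.\ inequality $G(t)\le C\,(-G'(t))$, that is $G'(t)+\frac{1}{C}G(t)\le 0$, and multiply by the integrating factor $e^{t/C}$ to obtain $\big(e^{t/C}G(t)\big)'\le 0$ a.e. Since $e^{t/C}G(t)$ is absolutely continuous, it is non-increasing, hence $e^{t/C}G(t)\le G(0)\le CF(0)$, which gives
$$G(t)\le CF(0)\,e^{-t/C}\qquad\text{for all }t\ge 0.$$

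Finally I would transfer this decay from $G$ back to $F$ using monotonicity of $F$ once more: for every $t\ge 0$,
$$C\,F(t+C)\le\int_t^{t+C}F(s)\,ds\le G(t)\le CF(0)\,e^{-t/C},$$
so $F(t+C)\le F(0)e^{-t/C}$; writing $\tau=t+C$ this is exactly $F(\tau)\le F(0)e^{1-\tau/C}$ for all $\tau\ge C$. For $0\le\tau<C$ the estimate is immediate since $F(\tau)\le F(0)\le F(0)e^{1-\tau/C}$, the last inequality because $1-\tau/C\ge 0$ on that range. Combining the two ranges yields $F(\tau)\le F(0)e^{1-\tau/C}$ for all $\tau\ge 0$.

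The argument is short and largely routine; the only step meriting genuine care is the first reduction — justifying that $G$ is absolutely continuous with $G'=-F$ a.e.\ and that the integrating-factor manipulation is legitimate for a function that is only absolutely continuous rather than $C^1$. Once that measure-theoretic point is settled, the remaining steps are elementary.
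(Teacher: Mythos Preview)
Your proof is correct. Note, however, that the paper does not supply its own proof of this lemma: it is stated with a direct citation to \cite[Theorem 8.1]{Komornik} and invoked as a black box in the stabilization argument. The argument you give is exactly the standard one from Komornik --- introduce the tail $G(t)=\int_t^\infty F$, convert \eqref{integineq} into the differential inequality $G'+C^{-1}G\le 0$, integrate with the factor $e^{t/C}$, and then recover decay of $F$ from decay of $G$ via the monotonicity bound $CF(t+C)\le\int_t^{t+C}F\le G(t)$.
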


In order to obtain an integral inequality in the form of \eqref{integineq}, we calculate \begin{equation}\label{spemultiplier}\frac{d}{dt}(u,q\cdot\nabla u)_{L^2(\Omega)},\end{equation} which is by now a classical multiplier used in the control theory of PDEs and well-posedness theory of nonhomogeneous initial boundary value problems. In \eqref{spemultiplier}, $q$ denotes a sufficiently smooth vector field on $\Omega$, which will be chosen in a special way later.  We have the following lemma:
\begin{lem}\label{controllemma} Let $u$ be a global weak solution of \eqref{GL_equation_1} and $q\in \left[C^2(\bar{\Omega})\right]^n$ be a real vector field over $\Omega$.  Then, the following identity holds true:

\begin{multline}
  \frac{d}{dt}(u,q\cdot\nabla u)_{L^2(\Omega)}-\int_{\Gamma}(q\cdot \nu)u\bar{u}_td\Gamma-(\kappa-i\beta)\int_\Omega div(q)|u|^{p+1}dx\\
(\lambda-i\alpha)\int_{\Gamma}div(q)u\partial_\nu \bar{u} d\Gamma-\lambda\int_\Omega((\nabla(div(q)\cdot\nabla \bar{u})u+div(q)|\nabla u|^2))dx\\
  =2i\lambda Im(\Delta u,q\cdot\nabla u)-2i\kappa Im(|u|^{p-1}u,q\cdot\nabla u)+2i\alpha\int_\Gamma (\partial_{\nu}u )(q\cdot \nabla \bar{u})d\Gamma\\
  -2i\alpha\text{Re}\sum_{m,j=1}^n((\partial_{x_m}q_j)u_{x_m},u_{x_j})-i\alpha\int_\Gamma(q\cdot \nu)|\nabla u|^2d\Gamma\\
 -\frac{2}{p+1}i\beta\int_\Gamma (q\cdot n)|u|^{p+1}d\Gamma+\frac{2}{p+1}i\beta\int_\Omega div(q)|u|^{p+1}dx.
\end{multline}
\end{lem}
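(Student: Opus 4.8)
The plan is to compute $\frac{d}{dt}(u,q\cdot\nabla u)_{L^2(\Omega)}$ directly by the product rule and then substitute the main equation \eqref{GL_equation_1} for $u_t$, namely $u_t=(\lambda+i\alpha)\Delta u-(\kappa+i\beta)|u|^{p-1}u+\gamma u$, and its conjugate. Since $u$ is only a weak solution, I would first carry out the computation for the smooth approximating strong solutions $u_\mu$ produced in Section 8, where all the integrations by parts below are rigorously justified, and then pass to the limit using the convergences \eqref{Vstrong}--\eqref{H2strong}; the final identity survives because every term is at most quadratic in $\nabla u$ against an $H^2$-quantity, or of the form $\int |u|^{p+1}(\cdots)$, all of which are continuous under those modes of convergence. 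The reader should note that $\gamma=0$ in the intended application, but it is harmless (and cleaner) to keep $\gamma$ general, so I would either carry a $\gamma$-term along or simply state the identity assuming the $\gamma u$ contributions cancel in pairs as they do; I will follow the statement as written.

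Writing $\frac{d}{dt}(u,q\cdot\nabla u)=(u_t,q\cdot\nabla u)+(u,q\cdot\nabla u_t)$, I substitute $u_t$ in the first slot and $\bar u_t$ (via the conjugate equation) in the second. This produces four groups of terms: the Laplacian terms $(\lambda+i\alpha)(\Delta u,q\cdot\nabla u)+\overline{(\lambda+i\alpha)}(u,q\cdot\nabla\Delta u)$; the nonlinear terms $-(\kappa+i\beta)(|u|^{p-1}u,q\cdot\nabla u)-\overline{(\kappa+i\beta)}(u,q\cdot\nabla(|u|^{p-1}u))$; and the $\gamma$-terms, which combine into $2\gamma\,\mathrm{Re}(u,q\cdot\nabla u)=\gamma\int_\Omega q\cdot\nabla(|u|^2)=-\gamma\int_\Omega \mathrm{div}(q)|u|^2+\gamma\int_\Gamma(q\cdot\nu)|u|^2$. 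For the Laplacian group, the key manipulation is the Rellich-type/Pohozaev identity: integrate $(u,q\cdot\nabla\Delta u)$ by parts twice to move derivatives off $\Delta u$, use $\int_\Omega (q\cdot\nabla\bar u)\Delta u=-\int_\Omega \nabla(q\cdot\nabla\bar u)\cdot\nabla u+\int_\Gamma (q\cdot\nabla\bar u)\partial_\nu u$, and expand $\nabla(q\cdot\nabla\bar u)=(\nabla q)\nabla\bar u+\sum q_j\nabla\bar u_{x_j}$ so that $\mathrm{Re}\int_\Omega \sum q_j\nabla\bar u_{x_j}\cdot\nabla u=\frac12\int_\Omega q\cdot\nabla(|\nabla u|^2)=-\frac12\int_\Omega\mathrm{div}(q)|\nabla u|^2+\frac12\int_\Gamma(q\cdot\nu)|\nabla u|^2$. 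The real part of the Laplacian group then yields the $\lambda$-terms on the left side of the asserted identity ($\lambda\int_\Omega(\nabla(\mathrm{div}\,q)\cdot\nabla\bar u)u$, $\lambda\int_\Omega\mathrm{div}(q)|\nabla u|^2$, and $(\lambda-i\alpha)\int_\Gamma\mathrm{div}(q)u\partial_\nu\bar u$ after the $\alpha$-part is separated), while the imaginary part of the Laplacian group produces $2i\lambda\,\mathrm{Im}(\Delta u,q\cdot\nabla u)$ together with the $\alpha$ boundary and bulk terms $2i\alpha\int_\Gamma(\partial_\nu u)(q\cdot\nabla\bar u)-2i\alpha\,\mathrm{Re}\sum_{m,j}((\partial_{x_m}q_j)u_{x_m},u_{x_j})-i\alpha\int_\Gamma(q\cdot\nu)|\nabla u|^2$. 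For the nonlinear group, I use $\int_\Omega q\cdot\nabla(|u|^{p+1})=-\int_\Omega\mathrm{div}(q)|u|^{p+1}+\int_\Gamma(q\cdot\nu)|u|^{p+1}$ to extract the $(\kappa-i\beta)\int_\Omega\mathrm{div}(q)|u|^{p+1}$ term (real $\kappa$ part) and the $\frac{2}{p+1}i\beta$ bulk and boundary terms (imaginary $\beta$ part), the leftover genuinely complex piece being $-2i\kappa\,\mathrm{Im}(|u|^{p-1}u,q\cdot\nabla u)$. Finally, the $\int_\Gamma(q\cdot\nu)u\bar u_t\,d\Gamma$ term on the left is exactly what I move over after using integration by parts $\int_\Omega\mathrm{div}((q\cdot\nabla\bar u)u\,q)$-type bookkeeping near the boundary — more precisely, it is the boundary remainder from writing $(u,q\cdot\nabla u_t)$, where one of the by-parts steps transfers a $\partial_t$ onto $u$ and leaves $\int_\Gamma(q\cdot\nu)u\bar u_t$.

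The main obstacle is purely organizational: there are a large number of boundary terms, and one must be careful to split each complex coefficient $\lambda\pm i\alpha$, $\kappa\pm i\beta$ into the real and imaginary pieces consistently and to track which boundary integrals are over $\Gamma$ versus $\Gamma_1$ (here everything is kept on $\Gamma$ and the dynamic/Dirichlet conditions are not yet inserted, which is deliberate — they will be used in the subsequent stabilization argument). A secondary subtlety is the justification at the weak-solution level: one should verify that $(u,q\cdot\nabla u)_{L^2(\Omega)}$ is absolutely continuous in $t$ with the claimed derivative, which follows because $u\in C([0,T];V)\cap L^2(0,T;H^2(\Omega))$ and $u_t\in L^2(0,T;L^2(\Omega))$ with $\gamma_0 u_t\in L^2(0,T;L^2(\Gamma_1))$ (Lemma \ref{H1global}), so all products appearing are integrable and the limit from the strong solutions $u_\mu$ is legitimate term by term. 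Once the identity is assembled for $u_\mu$ and each term is checked to be stable under the convergences \eqref{Vstrong}--\eqref{H2strong}, the lemma follows.
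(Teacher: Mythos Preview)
Your approach is correct and is precisely what the paper has in mind: the paper omits the proof entirely, stating only that it is obtained by ``slightly modifying the proof of \cite[Lemma~2.1]{Gal_GL}'' via ``integration by parts and tedious calculations,'' which is exactly the Pohozaev/Rellich-type computation you outline. Your additional care in justifying the identity at the weak-solution level by approximation with the strong solutions $u_\mu$ from Section~8 is a welcome detail that the paper does not spell out.
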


\begin{proof} Since the proof can be made by slightly modifying the proof of  \cite[Lemma 2.1]{Gal_GL} and is based on integration by parts and tedious calculations. It is omitted here.
\end{proof}

Let $q=x-x_0$.  Then, by the given geometric assumption on the boundary of $\Omega$ in Theorem \ref{decaythm2} we have $q\cdot \nu >0$ on $\Gamma_1$ and $q\cdot \nu\le 0$ on $\Gamma_0$. We can also simply calculate $div(q)=N.$ It is important to notice that since $u|_{\Gamma_0}\equiv 0$, one has $\nabla u|_{\Gamma_0}=(\partial_\nu u)\nu.$ Now, using these facts and Lemma \ref{controllemma}, it follows that
\begin{multline}\label{dec-est}
 2\alpha\int_t^T\|\nabla u\|_{L^2(\Omega)}^2dt+\frac{N\beta(p-1)}{p+1}\int_t^T\|u\|_{L^{p+1}(\Omega)}^{p+1}dt\le C(\|u(T)\|_V^2+\|u(t)\|_V^2)+\epsilon\int_t^T\|u\|_V^2dt\\
+C_\epsilon\int_t^T\left(\|u_t\|_{L^2(\Gamma_1)}^2dt+\|\Delta u\|_{L^2(\Omega)}^2+\|u\|_{L^{2p}(\Omega)}^{2p}\right).
\end{multline}

But from Lemma \ref{H1global}, we know that

$$\int_t^T\left(\|u_t\|_{L^2(\Gamma_1)}^2dt+\|\Delta u\|_{L^2(\Omega)}^2+\|u\|_{L^{2p}(\Omega)}^{2p}\right)\le C(F(t)-F(T)).$$

Combining the last inequality with  \eqref{dec-est}, it follows that

$$\int_t^TF(t)dt\le CF(T)+C(F(t)-F(T))\le CF(t),$$ and hence $$\int_t^\infty F(t)dt\le CF(t)$$ for $t\ge 0.$ Now, by using Lemma \ref{intinlem}, exponential decay follows, and the proof of Theorem \ref{decaythm2} is completed.

 \end{document}